\newtheorem{theorem}{Theorem}[section]
\newtheorem{remark}[theorem]{Remark}
\numberwithin{equation}{section}
\begin{document}

\title[Biharmonic wave scattering]{A novel boundary integral formulation for the biharmonic wave scattering problem}

\author{Heping Dong}
\address{School of Mathematics, Jilin University, Changchun,  Jilin 130012, China}
\email{dhp@jlu.edu.cn}

\author{Peijun Li}
\address{Department of Mathematics, Purdue University, West Lafayette, Indiana 47907, USA}
\email{lipeijun@math.purdue.edu}

\thanks{The first author was supported in part by the NSFC grant 12171201. The second author was supported partially by the NSF grant DMS-2208256.}

\subjclass[2010]{65N38, 45L05}

\keywords{Biharmonic wave equation, scattering problem, boundary integral equations, collocation method, error estimates}

\begin{abstract}
This paper is concerned with the cavity scattering problem in an infinite thin plate, where the out-of-plane displacement is governed by the two-dimensional biharmonic wave equation. Based on an operator splitting, the scattering problem is recast into a coupled boundary value problem for the Helmholtz and modified Helmholtz equations. A novel boundary integral formulation is proposed for the coupled problem. By introducing an appropriate regularizer, the well-posedness is established for the system of boundary integral equations. Moreover, the convergence analysis is carried out for the semi- and full-discrete schemes of the boundary integral system by using the collocation method. Numerical results show that the proposed method is highly accurate for both smooth and nonsmooth examples. 
\end{abstract}

\maketitle

\section{Introduction}

Scattering problems for biharmonic waves have significant applications in thin plate elasticity. For instance, the ultra-broadband elastic cloaking has been experimentally realized at acoustic frequencies and could serve as a model for seismic cloaks to protect infrastructure from earthquakes \cite{FGE-09, SWW-12}. Similar to photonic and phononic crystals, platonic crystals (PlaCs) are fabricated by using regularly spaced arrays of holes, called cavities, cut into thin elastic plates \cite{HMMM-12, MMM-09}. The design of PlaCs can be utilized to guide or harness 
destructive wave energy for constructive purposes \cite{EP-07, S-13, WUW2004}. As an emerging passive vibration control technique, the acoustic black hole (ABH) has recently been developed for vibration and noise reduction of a system \cite{PGCS-20}. The basic idea of ABH is to arrange some inhomogeneous medium in a thin beam or plate so that the wave can be trapped from propagating. In the wave motion of these structures, the out-of-plane displacement of the thin plate satisfies the biharmonic wave equation. Although the scattering theory has been well developed for acoustic, elastic, and electromagnetic waves, many scattering problems remain unsolved for biharmonic waves. This paper concerns the biharmonic wave scattering of a time-harmonic plane wave by a cavity in an infinite thin plate. A novel boundary integral formulation is proposed for the boundary value problem and a spectrally accurate collocation method is developed for the boundary integral system. 

The method of boundary integral equations has several advantages: the solution is represented by surface distributions which leads to fewer unknowns; the radiation condition is exactly enforced that avoids the truncation error of artificial boundary conditions; the surface geometry can be represented to high precision using appropriate boundary elements. However, in a conventional boundary integral method, these advantages may be offset by two factors: the high cost of evaluating interactions among all the boundary elements and the need to evaluate singular integrals. The former can be resolved by fast evaluation methods such as the panel clustering algorithm \cite{HN-89} or the fast multipole method \cite{GR-87}, and the latter has to be treated using special quadrature formulas or analytical expansions. Despite the difficulties, the boundary integral method provides an efficient approach for solving many problems, especially for those imposed in unbounded domains \cite{DR-shu1, DR-shu2}. 

We consider the cavity scattering problem in an infinite thin elastic plate, where the wave motion is governed by the two-dimensional biharmonic wave equation. As an exterior boundary value problem, it is analogous to the obstacle scattering problem for the acoustic, elastic, or electromagnetic wave equation. Compared with the obstacle scattering problem, the cavity scattering problem is much less investigated. A numerical study was given in \cite{S-13} on the boundary integral method for solving the biharmonic wave scattering by smooth cavities embedded in thin plates. In \cite{SMM-11}, boundary integral equations were deduced for cavities with different boundary conditions, and the full discretization of the boundary integral system was discussed by expanding the unknown boundary values in terms of Fourier series. However, the mathematical and numerical analysis are not available on the method of boundary integral equations for solving the biharmonic wave scattering problem. 

Motivated by \cite{DLL2021}, we present the first theoretical analysis on the biharmonic wave scattering problem. Based on a splitting of the biharmonic wave operator, two auxiliary functions are introduced to represent the Helmholtz and modified Helmholtz wave components, respectively. The scattering problem is recast into a coupled boundary value problem for the Helmholtz and modified Helmholtz equations. The uniquneness is established for the scattering problem by utilizing Rellich's lemma and the exponentially decaying property of the modified Helmholtz wave component. By the double- and single-layer potentials representing the Helmholtz and modified Helmoltz wave components, respectively, a novel boundary integral formulation is proposed for the coupled boundary value problem. Following \cite{DLL2021, LL2019}, we introduce a regularizer to the boundary integral system and split the singular operator into an isomorphic operator plus a compact one. The Riesz--Fredholm theory is then employed to show the well-posedness of the coupled boundary integral system. Moreover, the collocation method is empolyed to examine the semi- and full-discretization of the boundary integral equations. Special quadratures are adopted for the hypersingular and logarithmic singular integrals to carry out the convergence analysis for both schemes. In \cite{KS1993, Kress1995}, the quadratures were investigated on the similar hypersingular and logarithmic singular integrals for solving the acoustic obstacle scattering problem. In \cite{Kress2014}, a collocation method was developed and the convergence was analyzed by using the trigonometric interpolation to discretize the principal part of the hypersingular operator. We refer to \cite{KirschRitter1999, MP-book1986, SV1996} for studies on general singular integral equations. In addition to the convergence analysis, numerical experiments are provided to demonstrate the high accuracy of the proposed method for both smooth and nonsmooth examples.

The rest of this paper is structured as follows. In section \ref{s_pf}, the scattering problem is introduced and the uniqueness is proved for the boundary value problem. Section \ref{s_bie} presents the boundary integral equations and discusses the well-posedness of the coupled system. The convergence analysis is carried out in section \ref{s_cm} for the semi- and full-discrete schemes of the system. Section \ref{s_ne} presents numerical experiments to verify the theoretical findings and to demonstrate the superior performance of the  proposed method. Section \ref{s_c} concludes the paper with remarks and future work. 

\section{Problem formulation}\label{s_pf}

We consider the biharmonic wave scattering problem in a two-dimensional thin plate with a cavity, which is represented by a bounded domain $D\subset\mathbb R^2$ with an analytic boundary $\Gamma$. The plate is assumed to be made of a homogeneous and isotropic elastic medium in $\mathbb R^2\setminus\overline D$. Let $\tau=(\tau_1, \tau_2)^\top$ and $\nu=(\nu_1, \nu_2)^\top$ be the tangential and normal vectors on $\Gamma$, respectively. It is clear to note  $\tau_1=-\nu_2$ and $\tau_2=\nu_1$. The plate is excited by a time-harmonic plane wave
\begin{equation}\label{ui}
	u^{\rm inc}(x)=e^{\mathrm{i}\kappa x\cdot d},\quad x\in\mathbb R^2,
\end{equation}
where $\kappa>0$ is the wavenumber and $d=(\cos\theta, \sin\theta)^\top$ is the incident direction with $\theta\in [0,2\pi)$ being the incident angle. 

The out-of-plane displacement of the plate $u$ satisfies the two-dimensional biharmonic wave equation
\begin{align}\label{uwe}
	\Delta^2u-\kappa^4 u=0 \quad {\rm in}\, \mathbb{R}^2\setminus \overline{D}. 
\end{align}
Assuming that the cavity satisfies the clamped boundary condition, we have 
\begin{align}\label{bc}
	u=\partial_\nu u=0 \quad {\rm on}\, \Gamma.
\end{align}
The total displacement $u$ is composed of the incident wave field $u^{\rm inc}$ and the scattered field $v$, i.e., 
\[
u= u^{\rm inc}+ v. 
\]
It can be verified from \eqref{ui}--\eqref{uwe} that the scattered field $v$ satisfies
\begin{align}\label{vwe}
	\Delta^2v-\kappa^4 v=0 \quad {\rm in}\, \mathbb{R}^2\setminus \overline{D}. 
\end{align}
By \eqref{bc}, $v$ satisfies the following boundary conditions on $\Gamma$:
\begin{align}\label{bcv}
	v=-u^{\rm inc}, \quad \partial_\nu v=- \partial_\nu u^{\rm inc}. 
\end{align}
In addition, $v$ and $\Delta v$ are required to satisfy the Sommerfeld radiation condition
\begin{align}\label{sc}
	\lim_{r\to\infty}r^{\frac{1}{2}}(\partial_r
	v-\mathrm{i}\kappa v)=0, \quad
	\lim_{r\to\infty}r^{\frac{1}{2}}(\partial_r
	\Delta v-\mathrm{i}\kappa \Delta v)=0, \quad r=|x|.
\end{align}

Consider two auxiliary functions
\begin{align}\label{vhvm}
	v_{\rm H} = -\frac{1}{2\kappa^2}(\Delta v - \kappa^2 v), \quad v_{\rm M} = \frac{1}{2\kappa^2}(\Delta v + \kappa^2 v).
\end{align}
It is clear to note $v=v_{\rm H}+v_{\rm M}$ and $\Delta v=\kappa^2(v_{\rm M}-v_{\rm H})$. From \eqref{vwe}, we have
\[
	(\Delta - \kappa^2)(\Delta + \kappa^2)v=0 \quad {\rm in}\, \mathbb{R}^2\setminus \overline{D},
\]
which implies that $v_{\rm H}$ and $v_{\rm M}$ satisfy the Helmholtz equation and the modified Helmholtz equation in $\mathbb{R}^2\setminus \overline{D}$, respectively, i.e., 
\begin{align} \label{vHvM}
	\Delta v_{\rm H} + \kappa^2 v_{\rm H} = 0, \quad \Delta v_{\rm M} - \kappa^2 v_{\rm M} = 0. 
\end{align}
By \eqref{bcv}, $v_{\rm H}$ and $v_{\rm M}$ satisfy the coupled boundary conditions on $\Gamma$:
\begin{align}\label{bccouple}
	v_{\rm H}+v_{\rm M}= f_1,\quad \partial_\nu v_{\rm H} + \partial_\nu v_{\rm M}= f_2, 
\end{align}
where $f_1=-u^{\rm inc}$ and $f_2=-\partial_\nu u^{\rm inc}$. Combining \eqref{sc} and \eqref{vhvm} yields that $v_{\rm H}$ and $v_{\rm M}$ satisfy the Sommerfeld radiation condition 
\begin{align}\label{sc1}
	\lim_{r\to\infty}r^{\frac{1}{2}}(\partial_r
	v_{\rm H}-\mathrm{i}\kappa v_{\rm H})=0, \quad \lim_{r\to\infty}r^{\frac{1}{2}}(\partial_r
	v_{\rm M}-\mathrm{i}\kappa v_{\rm M})=0, \quad r=|x|.
\end{align}

Clearly, the scattering problems \eqref{vwe}--\eqref{sc} and \eqref{vHvM}--\eqref{sc1} are equivalent. Hence, it suffices to consider the scattering problem \eqref{vHvM}--\eqref{sc1}, where $v_{\rm H}$ and $v_{\rm M}$ are called the Helmholtz and modified Helmholtz wave components, respectively. 

\begin{remark}\label{vMdecay}
	Let $v_{\rm M}$ be a solution to the scattering problem \eqref{vHvM}--\eqref{sc1}. Denote $D\subset B_a=\{x\in\mathbb R^2: |x|<a\}$. Then $v_{\rm M}$ has the Fourier series expansion
	\[
		v_{\rm M}(r,\theta)=\sum_{n=-\infty}^{\infty}\frac{K_n(\kappa r)}{K_n(\kappa a)}\hat{v}^{(n)}_{\rm M}(a)e^{\mathrm{i}n\theta}, \quad r=|x|>a, 
	\]
	where $\hat{v}^{(n)}_{\rm M}(a)$ are the Fourier coefficients of $v_{\rm M}$ on $\partial B_a=\{x\in\mathbb R^2: |x|=a\}$ given by 
	$$
	\hat{v}^{(n)}_{\rm M}(a)=\frac{1}{2\pi}\int_{0}^{2\pi}v_{\rm M}(a,\theta)e^{-\mathrm{i}n\theta}\mathrm{d}\theta
	$$
	and $K_n(z)$ denotes the modified Bessel function of the second kind with order $n$. By \cite[p. 511]{Watson1922}, $K_n(z)$ has no zeros for $|\arg z|\leq\pi/2$, which implies that $K_n(\kappa a)\neq0$. Moreover, we have from \cite[\S 3.7 and \S 7.23]{Watson1922} that $K_n(t)=K_{-n}(t)$ and it admits the asymptotic expansion
\[
		K_n(t)=\sqrt{\frac{\pi}{2 t}}e^{-t}\Big\{1+O\big(\frac{1}{t}\big)\Big\},\quad t\to +\infty, \quad t>0, 
\]
	which shows that $v_{\rm M}$ and $\partial_r v_{\rm M}$ decay exponentially as $r\to\infty$ for a fixed $\kappa$.
\end{remark}

\begin{theorem}\label{unique1}
	The boundary value problem \eqref{vHvM}--\eqref{sc1} has at most one solution.  
\end{theorem}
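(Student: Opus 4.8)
The plan is to establish that the homogeneous problem, i.e.\ \eqref{vHvM}--\eqref{sc1} with $f_1=f_2=0$, admits only the trivial solution; by linearity this is equivalent to uniqueness. So I would start by assuming that $v_{\rm H}$ and $v_{\rm M}$ solve the coupled problem with $f_1=f_2=0$, which by \eqref{bccouple} means that on $\Gamma$ one has $v_{\rm H}=-v_{\rm M}$ and $\partial_\nu v_{\rm H}=-\partial_\nu v_{\rm M}$. Fix $a$ with $D\subset B_a$ and, for $R>a$, set $\Omega_R=B_R\setminus\overline D$, whose boundary is $\partial B_R\cup\Gamma$ with outward normal $\hat r$ on $\partial B_R$ and $-\nu$ on $\Gamma$.

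First I would apply Green's first identity separately to $v_{\rm H}$ and $v_{\rm M}$ on $\Omega_R$. Using $\Delta v_{\rm H}=-\kappa^2 v_{\rm H}$ and $\Delta v_{\rm M}=\kappa^2 v_{\rm M}$ from \eqref{vHvM}, the volume terms $\int_{\Omega_R}(|\nabla v_{\rm H}|^2-\kappa^2|v_{\rm H}|^2)$ and $\int_{\Omega_R}(|\nabla v_{\rm M}|^2+\kappa^2|v_{\rm M}|^2)$ are real, so taking imaginary parts kills them and leaves
\begin{equation*}
\mathrm{Im}\int_{\partial B_R}\overline{v_{\rm H}}\,\partial_r v_{\rm H}\,\mathrm ds=\mathrm{Im}\int_{\Gamma}\overline{v_{\rm H}}\,\partial_\nu v_{\rm H}\,\mathrm ds,\qquad \mathrm{Im}\int_{\partial B_R}\overline{v_{\rm M}}\,\partial_r v_{\rm M}\,\mathrm ds=\mathrm{Im}\int_{\Gamma}\overline{v_{\rm M}}\,\partial_\nu v_{\rm M}\,\mathrm ds.
\end{equation*}
The coupling step comes next: from $v_{\rm H}=-v_{\rm M}$ and $\partial_\nu v_{\rm H}=-\partial_\nu v_{\rm M}$ on $\Gamma$ the two minus signs cancel, giving $\overline{v_{\rm H}}\,\partial_\nu v_{\rm H}=\overline{v_{\rm M}}\,\partial_\nu v_{\rm M}$ pointwise on $\Gamma$, so the boundary integrals over $\Gamma$ coincide. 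By Remark \ref{vMdecay}, $v_{\rm M}$ and $\partial_r v_{\rm M}$ decay exponentially, hence the left-hand side of the second identity tends to $0$ as $R\to\infty$; this forces $\mathrm{Im}\int_\Gamma \overline{v_{\rm M}}\,\partial_\nu v_{\rm M}\,\mathrm ds=0$ and therefore $\mathrm{Im}\int_\Gamma \overline{v_{\rm H}}\,\partial_\nu v_{\rm H}\,\mathrm ds=0$, so that $\mathrm{Im}\int_{\partial B_R}\overline{v_{\rm H}}\,\partial_r v_{\rm H}\,\mathrm ds=0$ for every $R>a$.

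To conclude for $v_{\rm H}$, I would combine this with its Sommerfeld radiation condition \eqref{sc1}. Expanding $|\partial_r v_{\rm H}-\mathrm i\kappa v_{\rm H}|^2=|\partial_r v_{\rm H}|^2+\kappa^2|v_{\rm H}|^2-2\kappa\,\mathrm{Im}(\overline{v_{\rm H}}\,\partial_r v_{\rm H})$ and integrating over $\partial B_R$, the cross term vanishes by the previous step, so \eqref{sc1} yields $\int_{\partial B_R}|v_{\rm H}|^2\,\mathrm ds\to 0$ as $R\to\infty$. Rellich's lemma then gives $v_{\rm H}\equiv0$ in $\mathbb R^2\setminus\overline{B_a}$, and since $v_{\rm H}$ is real-analytic as a solution of the Helmholtz equation in the connected domain $\mathbb R^2\setminus\overline D$, unique continuation extends this to $v_{\rm H}\equiv0$ in $\mathbb R^2\setminus\overline D$. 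Finally, the boundary condition gives $v_{\rm M}=-v_{\rm H}=0$ on $\Gamma$; applying Green's first identity to $v_{\rm M}$ over all of $\Omega_R$ and letting $R\to\infty$ (the $\partial B_R$ term again vanishing by exponential decay) makes $\int_{\mathbb R^2\setminus\overline D}(|\nabla v_{\rm M}|^2+\kappa^2|v_{\rm M}|^2)\,\mathrm dx=0$, whence $v_{\rm M}\equiv0$.

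I expect the delicate point to be precisely the coupling argument: neither wave component satisfies a homogeneous Dirichlet or Neumann condition on $\Gamma$ on its own, so the usual Rellich route to $\int_{\partial B_R}|v_{\rm H}|^2\to0$ is not immediately available. The vanishing of the Helmholtz boundary flux on $\Gamma$ has to be transferred from the modified Helmholtz flux through the coupled conditions and then eliminated using the exponential decay established in Remark \ref{vMdecay}; getting the normal orientations and the cancellation of signs right in this transfer is the step that requires care.
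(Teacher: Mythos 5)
Your proof is correct and follows essentially the same route as the paper's: Green's identities on the annular region, transfer of the boundary flux from $v_{\rm H}$ to $v_{\rm M}$ through the coupled conditions (the two sign changes cancelling), elimination of the $v_{\rm M}$ contribution via the exponential decay of Remark \ref{vMdecay}, and Rellich's lemma to conclude $v_{\rm H}\equiv 0$. The only minor difference is the last step: the paper invokes Green's representation theorem using both vanishing Cauchy data of $v_{\rm M}$ on $\Gamma$, whereas you use an energy identity for the modified Helmholtz equation that needs only $v_{\rm M}=0$ on $\Gamma$; both are valid.
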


\begin{proof}
	It is required to show that $v_{\rm H}=v_{\rm M}=0$ in $\mathbb{R}^2\setminus\overline{D}$ when $f_1=f_2=0$. Applying Green's theorem in $D_a:= B_a\setminus\overline D$ and the boundary condition \eqref{bccouple}, we obtain 
	\begin{align*}
		\int_{\partial B_a}v_{\rm H}\partial_\nu\overline{v_{\rm H}}\mathrm{d}s&=\int_{D_a}\big(v_{\rm H}\Delta\overline{v_{\rm H}}+\nabla v_{\rm H}\cdot\nabla\overline{v_{\rm H}}\big)\mathrm{d}x+\int_{\Gamma}v_{\rm H}\partial_\nu\overline{v_{\rm H}}\mathrm{d}s\nonumber\\
		&=\int_{D_a}\big(-\kappa^2|v_{\rm H}|^2 + |\nabla v_{\rm H}|^2\big)\mathrm{d}x+\int_{\Gamma}v_{\rm M}\partial_\nu\overline{v_{\rm M}}\mathrm{d}s,\\
		\int_{\partial B_a}v_{\rm M}\partial_\nu\overline{v_{\rm M}}\mathrm{d}s&=\int_{D_a}\big(\kappa^2|v_{\rm M}|^2 + |\nabla v_{\rm M}|^2\big)\mathrm{d}x+\int_{\Gamma}v_{\rm M}\partial_\nu\overline{v_{\rm M}}\mathrm{d}s,
	\end{align*}
	which give
	\[
		\Im \int_{\partial B_a}v_{\rm H}\partial_\nu\overline{v_{\rm H}}\mathrm{d}s=\Im \int_{\Gamma}v_{\rm M}\partial_\nu\overline{v_{\rm M}}\mathrm{d}s=\Im \int_{\partial B_a}v_{\rm M}\partial_\nu\overline{v_{\rm M}}\mathrm{d}s.
	\]
	It follows from the above equation and the Sommerfeld radiation condition \eqref{sc} that
	\begin{align}\label{decay1}
		\lim_{a\to\infty}\int_{\partial B_a}|\partial_\nu v_{\rm H}-\mathrm{i}\kappa v_{\rm H}|^2\mathrm{d}s&=\lim_{a\to\infty}\int_{\partial B_a}\big\{|\partial_\nu v_{\rm H}|^2+\kappa^2|v_{\rm H}|^2+2\kappa\Im(v_{\rm H}\partial_\nu\overline{v_{\rm H}})\big\}\mathrm{d}s\nonumber\\
		&=\lim_{a\to\infty}\int_{\partial B_a}\big\{|\partial_\nu v_{\rm H}|^2+\kappa^2|v_{\rm H}|^2+2\kappa\Im(v_{\rm M}\partial_\nu\overline{v_{\rm M}})\big\}\mathrm{d}s=0.
	\end{align}
	By Remark \ref{vMdecay} and \eqref{decay1}, we have
    \[
		\lim_{a\to\infty}\int_{\partial B_a}\big\{|\partial_\nu v_{\rm H}|^2+\kappa^2|v_{\rm H}|^2\big\}\mathrm{d}s=0.
	\]
	Using Rellich's lemma yields $v_{\rm H}=0$ in $\mathbb{R}^2\setminus \overline{D}$. It follows from the continuity of $v_{\rm H}$ and $\partial_\nu v_{\rm H}$ and \eqref{bccouple} that $v_{\rm H}=\partial_\nu v_{\rm H}=0$ and $v_{\rm M}=\partial_\nu v_{\rm M}=0$ on $\Gamma$, which shows $v_{\rm M}=0$ in $\mathbb{R}^2\setminus \overline{D}$ by Green's representation theorem (cf. \cite[Theorem 2.5]{DR-shu2}).
\end{proof}

\section{Boundary integral formulation}\label{s_bie}

In this section, we propose a novel formulation of boundary integral equations for the scattering problem \eqref{vHvM}--\eqref{sc1}. Based on the Riesz--Fredholm theory, a regularizer is constructed to split the operator equation into the form of an isomorphic operator plus a compact one.

\subsection{Coupled integral equations}

Denote by $G_{\rm H}$ and $G_{\rm M}$ the fundamental solutions to the two-dimensional Helmholtz and modified Helmholtz equations, respectively. Explicitly, we have 
\[
	G_{\rm H}(x,y)=\frac{\mathrm{i}}{4}H_0^{(1)}(\kappa|x-y|),\quad 	G_{\rm M}(x,y)=\frac{\mathrm{i}}{4}H_0^{(1)}(\mathrm{i}\kappa|x-y|),
\]
where $H_0^{(1)}$ is the Hankel function of the first kind with order zero. Using the fundamental solutions, we represent the solutions of \eqref{vHvM} as the double- and single-layer potentials with densities $g_1$ and $g_2$, respectively, i.e.,
\begin{align}\label{dslayer}
	v_{\rm H}(x)=\int_\Gamma \frac{\partial G_{\rm H}(x,y)}{\partial\nu(y)}g_1(y)\mathrm{d}s(y),\quad v_{\rm M}(x)=\int_\Gamma G_{\rm M}(x,y)g_2(y)\mathrm{d}s(y),\quad x\in\mathbb{R}^2\setminus\Gamma_D. 
\end{align}

It follows from the solution representation \eqref{dslayer}, the boundary conditions \eqref{bcv}, and the jump relations of the single- and double-layer potentials, we may deduce the boundary integral equations on $\Gamma$: 
\begin{align}
	\begin{split}\label{cboundaryIE}
		\frac{1}{2}g_1(x)+\int_{\Gamma}\frac{\partial G_{\rm H}(x,y)}{\partial\nu(y)}g_1(y)\,\mathrm{d}s(y)+ \int_{\Gamma}G_{\rm M}(x,y)g_2(y)\,\mathrm{d}s(y)&=-u^{\rm inc}(x),
		\\
		\int_{\Gamma}\frac{\partial^2 G_{\rm H}(x,y)}
		{\partial\nu(x)\partial\nu(y)}g_1(y)\,\mathrm{d}s(y)-\frac{1}{2}g_2(x)+\int_{\Gamma}\frac{\partial G_{\rm M}(x,y)}
		{\partial\nu(x)}g_2(y)\,\mathrm{d}s(y)&=-\frac{\partial u^{\rm inc}(x)}{\partial\nu(x)}.
	\end{split}
\end{align}
For $x\in\Gamma$, introduce the following integral operators:
\begin{align*}
	&(\mathcal{L}g)(x)=2\int_{\Gamma}\frac{\partial G_{\rm H}(x,y)}
	{\partial\nu(y)}g(y)\mathrm{d}s(y), 
	&&(\mathcal{T}g)(x)=2\int_{\Gamma_D}\frac{\partial^2 G_{\rm H}(x,y)}
	{\partial\nu(x)\partial\nu(y)}g(y)\mathrm{d}s(y),\\
	&(\mathcal{K}g)(x)=2\int_{\Gamma_D}\frac{\partial G_{\rm M}(x,y)}
	{\partial\nu(x)}g(y)\mathrm{d}s(y), 
	&&(\mathcal{S}g)(x)=2\int_{\Gamma}G_{\rm M}(x,y)g(y)\mathrm{d}s(y). 
\end{align*}
Since the fundamental solution $G_{\rm M}$ decays exponentially, the modified Helmholtz wave component $v_{\rm M}$ given in \eqref{dslayer} does not propagate. Hence we may obtain from \eqref{dslayer} that the far-field pattern of the scattered wave $v$ is  
\begin{align}\label{singlelayer_far}
	v_\infty(\hat{x})=\varrho\int_{\Gamma}\frac{\partial e^{-\mathrm{i}
			\kappa\hat{x}\cdot y}}{\partial\nu(y)} g_1(y)\,\mathrm{d}s(y),
\end{align}
where $\varrho= e^{\mathrm{i}\pi/4}/{\sqrt{8\kappa\pi}}$ and $\hat x=x/r$ is the observation direction.

With the help of the integral operators, it is convenient to rewrite \eqref{cboundaryIE} into the operator form
\begin{equation}\label{direct field}
	\begin{cases}
		g_1 + \mathcal{L} g_1+\mathcal{S} g_2=2f_1,\\
		\mathcal{T} g_1 - g_2 + \mathcal{K} g_2=2f_2. 
	\end{cases}
\end{equation}

\begin{theorem}\label{unique2}
	The coupled boundary integral equations \eqref{direct field} has at most one solution provided that the wavenumber $\kappa$ is not the Dirichlet eigenvalue of the Helmholtz equation in $D$. 
\end{theorem}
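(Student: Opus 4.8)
The plan is to prove uniqueness by showing that the homogeneous system, i.e.\ \eqref{direct field} with $f_1=f_2=0$, admits only the trivial solution. So let $(g_1,g_2)$ solve the homogeneous equations, and use these densities to build the double- and single-layer potentials $v_{\rm H}$ and $v_{\rm M}$ of \eqref{dslayer} throughout $\mathbb{R}^2\setminus\Gamma$, i.e.\ on both sides of the curve. By construction $v_{\rm H}$ satisfies $\Delta v_{\rm H}+\kappa^2 v_{\rm H}=0$ and $v_{\rm M}$ satisfies $\Delta v_{\rm M}-\kappa^2 v_{\rm M}=0$ in $D$ and in $\mathbb{R}^2\setminus\overline D$; in the exterior the Sommerfeld condition holds for $v_{\rm H}$, while $v_{\rm M}$ decays exponentially by Remark \ref{vMdecay}, so \eqref{sc1} is met. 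The standard jump relations of the layer potentials are then available on $\Gamma$.

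First I would read the homogeneous version of \eqref{direct field} as a statement about the exterior traces. Writing $w^{\pm}$ for the limits of a quantity $w$ taken from $\mathbb{R}^2\setminus\overline D$ and from $D$, the jump relations identify the two homogeneous equations with the coupled conditions $v_{\rm H}^{+}+v_{\rm M}^{+}=0$ and $\partial_\nu v_{\rm H}^{+}+\partial_\nu v_{\rm M}^{+}=0$ on $\Gamma$. Hence the pair $(v_{\rm H},v_{\rm M})$, restricted to $\mathbb{R}^2\setminus\overline D$, solves the homogeneous scattering problem \eqref{vHvM}--\eqref{sc1}. Invoking the uniqueness Theorem \ref{unique1} then gives $v_{\rm H}=v_{\rm M}=0$ in $\mathbb{R}^2\setminus\overline D$, so that all four exterior traces $v_{\rm H}^{+}$, $\partial_\nu v_{\rm H}^{+}$, $v_{\rm M}^{+}$, $\partial_\nu v_{\rm M}^{+}$ vanish.

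Next I would transfer this to $D$ through the continuity of the potentials. Since the single-layer potential $v_{\rm M}$ is continuous across $\Gamma$, we get $v_{\rm M}^{-}=v_{\rm M}^{+}=0$, so $v_{\rm M}$ solves the modified Helmholtz equation in $D$ with homogeneous Dirichlet data; testing with $\overline{v_{\rm M}}$ and integrating by parts forces $\int_D(|\nabla v_{\rm M}|^2+\kappa^2|v_{\rm M}|^2)\,\mathrm{d}x=0$, whence $v_{\rm M}\equiv 0$ in $D$ for every $\kappa$, and the jump of its normal derivative yields $g_2=\partial_\nu v_{\rm M}^{-}-\partial_\nu v_{\rm M}^{+}=0$ unconditionally. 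Because the double-layer potential $v_{\rm H}$ has continuous normal derivative, we get $\partial_\nu v_{\rm H}^{-}=\partial_\nu v_{\rm H}^{+}=0$, so $v_{\rm H}$ solves the Helmholtz equation in $D$ subject to homogeneous Neumann data. This is exactly where the spectral hypothesis enters: it guarantees that the interior problem just obtained has only the trivial solution, so $v_{\rm H}\equiv 0$ in $D$, and then the value jump of the double layer gives $g_1=v_{\rm H}^{+}-v_{\rm H}^{-}=0$. Thus $g_1=g_2=0$ and uniqueness follows.

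The main obstacle, and the step demanding the most care, is the precise bookkeeping of the jump relations together with the correct identification of the two interior boundary value problems: one must track that the single-layer (modified Helmholtz) component produces an interior Dirichlet problem carrying no spectral obstruction, so that $g_2$ vanishes for all $\kappa$, whereas the double-layer (Helmholtz) component produces an interior problem whose triviality is governed by the interior spectrum, so that the assumption on $\kappa$ is needed only to force $g_1=0$; note in particular that it is the \emph{continuity of the normal derivative} of the double layer that fixes which interior problem arises. A secondary point is verifying the hypotheses of Theorem \ref{unique1} for the constructed potentials, especially that $v_{\rm M}$ and $\partial_r v_{\rm M}$ obey the radiation condition, which follows from the exponential decay recorded in Remark \ref{vMdecay}.
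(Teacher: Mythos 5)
Your proposal reproduces the paper's own proof almost exactly in structure: you extend the two layer potentials to both sides of $\Gamma$, observe that the homogeneous version of \eqref{direct field} is precisely the statement that the exterior traces satisfy the homogeneous coupled conditions \eqref{bccouple}, invoke Theorem \ref{unique1} to conclude that the exterior fields vanish, and then use the jump relations to pass to decoupled interior problems. Your handling of the modified Helmholtz component (interior Dirichlet problem, energy identity, hence $v_{\rm M}\equiv 0$ in $D$ and $g_2=0$ with no restriction on $\kappa$) is equivalent to the paper's observation that $\mathrm{i}\kappa$ can never be a Dirichlet eigenvalue, and your bookkeeping of all four jump relations agrees with the paper's.

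The step that deserves scrutiny is the Helmholtz component, and here there is a genuine logical gap in your writeup. You correctly derive --- indeed more explicitly than the paper does --- that continuity of the normal derivative of the double layer leaves $v_{\rm H}$ solving an interior \emph{Neumann} problem: $\Delta v_{\rm H}+\kappa^2 v_{\rm H}=0$ in $D$, $\partial_\nu v_{\rm H}=0$ on $\Gamma$, with the Dirichlet trace $v_{\rm H}|_\Gamma=-g_1$ unknown. But you then assert that the stated hypothesis, namely that $\kappa$ is not a \emph{Dirichlet} eigenvalue, forces this Neumann problem to have only the trivial solution. That implication does not hold: triviality of the interior Neumann problem requires $\kappa^2$ not to be a Neumann eigenvalue of $-\Delta$ in $D$, and the Dirichlet and Neumann spectra are different. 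It is worth saying plainly that the paper's own proof makes exactly the same leap (it deduces $v_{\rm H}^i=0$ from the Dirichlet hypothesis even though the only homogeneous interior data available is Neumann), so your blind reconstruction has faithfully reproduced --- and in fact exposed --- a mismatch in the paper: for the double-layer/single-layer representation \eqref{dslayer}, the natural hypothesis is that $\kappa^2$ is not an interior Neumann eigenvalue. A Dirichlet hypothesis would be the correct one if $v_{\rm H}$ were represented by a single-layer potential instead, as in the single--single formulation \eqref{singlelayer} of section \ref{s_ne}, for which the paper leaves the uniqueness question open.
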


\begin{proof}
	It is required to show that $g_1=g_2=0$ if $f_1=f_2=0$. Let
	\begin{eqnarray*}
		v_{\rm H}(x)=\begin{cases}
			v_{\rm H}^i, &  x\in D,\\
			v_{\rm H}^e, &  x\in \mathbb{R}^2\setminus \overline{D},
		\end{cases}\qquad 
		v_{\rm M}(x)=\begin{cases}
			v_{\rm M}^i, &  x\in D,\\
			v_{\rm M}^e, &  x\in \mathbb{R}^2\setminus \overline{D}.
		\end{cases}
	\end{eqnarray*}
	Since $v_{\rm H}^e$ and $v_{\rm M}^e$ satisfy the boundary value problem \eqref{vHvM}--\eqref{sc1}, it follows from 
	Theorem \ref{unique1} that they are identically zero in $\mathbb{R}^2\setminus \overline{D}$. Using the jump relations of the single- and double-layer potentials, we have on $\Gamma$ that 
	\begin{eqnarray*}
		&v_{\rm H}^e - v_{\rm H}^i = g_1, \quad v_{\rm M}^e - v_{\rm M}^i = 0,\\
		&\partial_\nu v_{\rm H}^e - \partial_\nu v_{\rm H}^i = 0, \quad \partial_\nu v_{\rm M}^e - \partial_\nu v_{\rm M}^i = -g_2.
	\end{eqnarray*}
	Since neither $\kappa$ nor $\mathrm{i}\kappa$ is the Dirichlet eigenvalue of the Helmholtz equation in $D$, we deduce $v_{\rm H}^i=v_{\rm M}^i=0$ in $D$, which implies $g_1=g_2=0$.
\end{proof} 

By Theorem \ref{unique2}, the uniqueness holds for the coupled system \eqref{direct field} if the wavenumber $\kappa$ is not the Dirichlet eigenvalue of the Helmholtz equation in $D$. In general, this assumption can be removed by using combined double- and single-layer potentials for the boundary integral formulation \cite{DR-shu1}. In this work, we assume that this assumption is satisfied and the coupled system \eqref{direct field} has at most one solution. 

\subsection{Parameterization}

Let the analytic boundary $\Gamma$ have a parametrization of the form
\[
	\Gamma=\{\gamma(t)=(\gamma_1(t),\gamma_2(t))\in\mathbb R^2: 0\leq t<2\pi\}, 
\]
where $\gamma_1$ and $\gamma_2$ are $2\pi$-periodic functions satisfying $|\gamma'(t)|>0$ for $t\in[0, 2\pi)$. Plugging the parameterization of $\Gamma$ into $\mathcal{L}, \mathcal{S}, \mathcal{K}$, and $\mathcal{T}$, we define the parameterized integral operators 
\begin{align*}
	&(L\phi)(t)=\int_0^{2\pi}l(t,\zeta)\phi(\zeta)\mathrm{d}\zeta, 
	&&(K\phi)(t)=\int_0^{2\pi}k(t,\zeta)\phi(\zeta)\mathrm{d}\zeta, \\
	&(S\phi)(t)=\int_0^{2\pi}s(t,\zeta)
	\phi(\zeta)\mathrm{d}\zeta,
	&&(R\phi)(t)=\int_0^{2\pi}r(t,\zeta)
	\phi(\zeta)\mathrm{d}\zeta,\\
	&(H\phi)=\int_0^{2\pi}h(t,\zeta)
	\phi(\zeta)\mathrm{d}\zeta,
\end{align*}
where the integral kernels are given by 
\begin{align*}
	l(t,\zeta)&=\frac{\mathrm{i}\kappa}{2}n(\zeta)\cdot[\gamma(t)-\gamma(\zeta)] \frac{H_1^{(1)}(\kappa|\gamma(t)-\gamma(\zeta)|)}{|\gamma(t)-\gamma(\zeta)|},\quad
	s(t,\zeta)=\frac{\mathrm{i}}{2}H_0^{(1)}(\mathrm{i}\kappa|\gamma(t)-\gamma(\zeta)|),\\
	k(t,\zeta)&=\frac{\kappa}{2}n(t)\cdot[\gamma(t)-\gamma(\zeta)] \frac{H_1^{(1)}(\mathrm{i}\kappa|\gamma(t)-\gamma(\zeta)|)}{|\gamma(t)-\gamma(\zeta)|},\\
	r(t,\zeta)&=\frac{\mathrm{i}\kappa^2}{2}H_0^{(1)}(\kappa|\gamma(t)-\gamma(\zeta)|)n(t)\cdot n(\zeta),\\
	h(t,\zeta)&=\frac{\mathrm{i}}{2}\tilde{h}(t,\zeta)\Big\{\kappa^2H_0^{(1)}(\kappa|\gamma(t)-\gamma(\zeta)|)- \frac{2\kappa H_1^{(1)}(\kappa|\gamma(t)-\gamma(\zeta)|)}{|\gamma(t)-\gamma(\zeta)|}\Big\}\\
	&\qquad+\frac{\mathrm{i}\kappa \gamma'(t)\cdot \gamma'(\zeta)}{2|\gamma(t)-\gamma(\zeta)|}H_1^{(1)}(\kappa|\gamma(t)-\gamma(\zeta)|)+\frac{1}{4\pi}\frac{1}{\sin^2\frac{1}{2}(t-\zeta)}.
\end{align*}
Here $n(t):=\big(\gamma'_2(t), -\gamma'_1(t)\big)^\top$, $n^\perp(t):=\gamma'(t)=\big(\gamma'_1(t), \gamma'_2(t)\big)^\top$, and
\[
\tilde{h}(t,\zeta)=\frac{\big[\gamma'(t)\cdot (\gamma(t)-\gamma(\zeta))\big]\big[\gamma'(\zeta)\cdot(\gamma(t)-\gamma(\zeta))\big]}{|\gamma(t)-\gamma(\zeta)|^2}.
\]
The kernels $l(t,\zeta)$, $s(t,\zeta)$, $k(t,\zeta)$, $r(t,\zeta)$, and $h(t,\zeta)$ can be decomposed into the following forms: 
\begin{align}\label{kerdecom_chi}
	\begin{split}
		\chi(t,\zeta)&= \chi_1(t,
		\zeta)\ln\Big(4\sin^2\frac{t-\zeta}{2}\Big)+\chi_2(t,\zeta), \\
		\chi_2(t,\zeta)&=\chi(t,\zeta)-\chi_1(t,
		\zeta)\ln\Big(4\sin^2\frac{t-\zeta}{2}\Big),
	\end{split}
\end{align}
where $\chi=l,s,k,r,h$, and $\chi_1, \chi_2$ are analytic functions. Explicitly, we have 
\begin{align*}
	l_1(t,\zeta)&= \frac{\kappa}{2\pi}n(\zeta)\cdot\big[
	\gamma(\zeta)-\gamma(t)\big]\frac{J_1(\kappa|\gamma(t)-\gamma(\zeta)|)}{
		|\gamma(t)-\gamma(\zeta)|}, ~~
	s_1(t,\zeta)= -\frac{1}{2\pi}J_0(\mathrm{i}\kappa|\gamma(t)-\gamma(\zeta)|), \\
	k_1(t,\zeta)&= \frac{\mathrm{i}\kappa}{2\pi}n(t)\cdot\big[
	\gamma(t)-\gamma(\zeta)\big]\frac{J_1(\mathrm{i}\kappa|\gamma(t)-\gamma(\zeta)|)}{
		|\gamma(t)-\gamma(\zeta)|}, \\
	r_1(t,\zeta)&= -\frac{\kappa^2}{2\pi}J_0(\kappa|\gamma(t)-\gamma(\zeta)|)n(t)\cdot n(\zeta), \\
	h_1(t,\zeta)&=-\frac{1}{2\pi}\tilde{h}(t,\zeta)\Bigg(\kappa^2J_0(\kappa|\gamma(t)-\gamma(\zeta)|)-\frac{2\kappa J_1(\kappa|\gamma(t)-\gamma(\zeta)|)}{
		|\gamma(t)-\gamma(\zeta)|}\Bigg)\\
	&\qquad-\frac{\kappa \gamma'(t)\cdot \gamma'(\zeta)}{2\pi|\gamma(t)-\gamma(\zeta)|}J_1(\kappa|\gamma(t)-\gamma(\zeta)|),
\end{align*}
where $J_0$ and $J_1$ are the Bessel functions of the first kind with order zero and one, respectively. The diagonal entries are given by 
\begin{align*}
	l_1(t,t)&=0, \quad l_2(t,t)=
	\frac{1}{2\pi}\frac{{n}(t)\cdot \gamma''(t)}{|\gamma'(t)|^2},\\
	s_1(t,t)&=-\frac{1}{2\pi}, \quad  s_2(t,t)=
	\frac{\mathrm{i}}{2}-\frac{C}{\pi}-\frac{1}{\pi}\ln\big(\frac{\mathrm{i}\kappa}{2}|\gamma'(t)|\big),\\
	r_1(t,t)&=-\frac{\kappa^2}{2\pi}|\gamma'(t)|^2, \quad  r_2(t,t)=\kappa^2|\gamma'(t)|^2
	\Big\{\frac{\mathrm{i}}{2}-\frac{C}{\pi}-\frac{1}{\pi}\ln\big(\frac{\kappa}{2}|\gamma'(t)|\big)\Big\},\\
		k_1(t,t)&=0, \quad k_2(t,t)=
	\frac{1}{2\pi}\frac{{n}(t)\cdot \gamma''(t)}{|\gamma'(t)|^2},\quad
	h_1(t,t)=-\frac{\kappa^2|\gamma'(t)|^2}{4\pi}, \\
	h_2(t,t)&=\Big(\pi\mathrm{i}-1-2C-2\ln\frac{\kappa|\gamma'(t)|}{2}\Big)\frac{\kappa^2|\gamma'(t)|^2}{4\pi}+\frac{1}{12\pi}\\
	&\qquad+\frac{[\gamma'(t)\cdot \gamma''(t)]^2}{2\pi|\gamma'(t)|^4}-\frac{|\gamma''(t)|^2}{4\pi|\gamma'(t)|^2}-\frac{\gamma'(t)\cdot \gamma'''(t)}{6\pi|\gamma'(t)|^2},
\end{align*}
where $C$ denotes Euler's constant. We refer to \cite{DR-shu2, Kress1995} for the details of the decomposition.

By the identities (cf. \cite[$(2.4)$ and $(2.6)$]{Kress1995})
\[
(\mathcal{T}g)(x)=2\int_{\Gamma}\frac{\partial\Phi(x,y)}{\partial\tau(x)}\frac{\partial g(y)}{\partial\tau}\mathrm{d}s(y)+2\kappa^2\int_{\Gamma}\Phi(x,y)\nu(x)\cdot\nu(y)g(y)\mathrm{d}s(y)
\]
and
\begin{align*}
&2\int_{\Gamma}\frac{\partial\Phi(x(t),y)}{\partial\tau(x(t))}\frac{\partial g(y)}{\partial\tau}\mathrm{d}s(y)\\
&=\frac{1}{|\gamma'(t)|}\int_0^{2\pi}\left(\frac{1}{2\pi}\cot\big(\frac{\zeta-t}{2}\big) \frac{\partial g(y(\zeta))}{\partial\tau}-h(t,\zeta)g(y(\zeta))\right) d\zeta,
\end{align*}
the parameterized integral operator $T$ can be represented as
\[
	|\gamma'|T\phi=(T_0\phi-M_0\phi)-H\phi+R\phi,
\]
where the operator $T_0: H^p[0,2\pi]\rightarrow
H^{p-1}[0,2\pi]$ and $M_0$ are defined by 
$$
(T_0\phi)(t)=\frac{1}{2\pi}\int_0^{2\pi}\cot\big(\frac{\zeta-t}{2}\big)\phi'(\zeta)\mathrm{d}\zeta+\frac{\mathrm{i}}{2\pi}\int_{0}^{2\pi}\phi(\zeta)\,\mathrm{d}\zeta\overset{\text{def}}{=}(\widetilde{T}_0\phi)(t)+(M_0\phi)(t).
$$
Substituting Maue's formula into \eqref{direct field} and multiplying $|\gamma'|$ on both sides of the second equation of \eqref{direct field}, we obtain  
\begin{align}\label{direct parafield}
	\Bigg(\left[     
	\begin{array}{cc}
		I     & 0 \\ 
		T_0   & -I
	\end{array}
	\right]+\left[     
	\begin{array}{cc}
		L      & S \\ 
		R-H-M_0   & K
	\end{array}
	\right]\Bigg)\left[                  
	\begin{array}{c}
		\psi_1 \\ 
		\psi_2
	\end{array}
	\right]=\left[                  
	\begin{array}{c}
		\eta_1 \\ 
		\eta_2
	\end{array}
	\right],
\end{align}
where $\eta_1=2(f_1\circ \gamma)$, $\eta_2=2(f_2\circ \gamma)|\gamma'|$, $\psi_1=(g_1\circ \gamma)$, $\psi_2=(g_2\circ \gamma)|\gamma'|$, and $I$ is the identity operator.

\subsection{Operator equations}

For $p\geq0$, we denote the space of $2\pi$-periodic functions by $H^p[0,2\pi]$, which is equipped with the norm
$$
\|u\|_p^2:=\sum_{m=-\infty}^\infty(1+m^2)^p|\hat{u}_m|^2<\infty,
$$
where the Fourier coefficients $\hat{u}_m$ are given by 
$$
\hat{u}_m=\frac{1}{2\pi}\int_0^{2\pi}u(t)e^{-\mathrm{i}mt}
\,\mathrm{d}t, \quad m=0,\pm1,\pm2,\cdots
$$
Define the Sobolev space
\begin{align*}
	H^p[0,2\pi]^2&=\Big\{\eta=(\eta_1, \eta_2)^\top: \eta_j\in H^p[0,2\pi], j=1,2\Big\},
\end{align*}
which has the standard norm $\|\eta\|_p=\|\eta_1\|_p+\|\eta_2\|_p$.

Define the integral operators $S_0: H^p[0,2\pi]\rightarrow
H^{p+1}[0,2\pi]$, $S_1: H^p[0,2\pi]\rightarrow
H^{p+2}[0,2\pi]$, and $S_2: H^p[0,2\pi]\rightarrow
H^{p+3}[0,2\pi]$ by
\begin{align*}
	(S_0\phi)(t)=&-\frac{1}{2\pi}\int_0^{2\pi}\ln\Big(4\sin^2\frac{t-\zeta}{2}\Big)\phi(\zeta)\,\mathrm{d}\zeta+\frac{\mathrm{i}}{2\pi}\int_{0}^{2\pi}\phi(\zeta)\mathrm{d}\zeta\\
	\overset{\text{def}}{=}&(\widetilde{S}_0\phi)(t)+(M_0\phi)(t),\\
	(S_1\phi)(t)=&\frac{1}{4\pi}\int_0^{2\pi}\ln\Big(4\sin^2\frac{t-\zeta}{2}\Big)\sin(t-\zeta)\phi(\zeta)\mathrm{d}\zeta,\\
	(S_2\phi)(t)=&-\frac{1}{8\pi}\int_0^{2\pi}\ln\Big(4\sin^2\frac{t-\zeta}{2}\Big)\sin^2(t-\zeta)\phi(\zeta)\mathrm{d}\zeta,
\end{align*}
which are bounded for any $p\geq0$. We split the operators $S$, $L$, $K$, $R$, and $H$ as
\begin{align*}
	&S=\widetilde{S}_0+E_1S_2+\widetilde{S}, \quad L=E_2S_2+\widetilde{L}, \quad K=-E_2S_2 +\widetilde{K},\\
	&R=E_1\widetilde{S}_0+2E_3S_1+E_4S_2+\widetilde{R}, \quad
	H=\frac{1}{2}E_1\widetilde{S}_0+E_3S_1+E_5S_2+\widetilde{H},
\end{align*}
where 
\begin{align*}
	&E_1\phi=\kappa^2|\gamma'|^2\phi,\quad E_2\phi=\kappa^2n\cdot \gamma''\phi,\quad E_3\phi=\kappa^2\gamma'\cdot \gamma''\phi,\\
	&E_4\phi=\kappa^4|\gamma'|^4\phi+2\kappa^2\gamma'\cdot \gamma'''\phi,\quad E_5\phi=-\frac{3}{4}\kappa^4|\gamma'|^4\phi+\kappa^2\gamma'\cdot \gamma'''\phi
\end{align*}
are bounded operators on $H^{p}[0,2\pi]$, and $E_1$ is isomorphic on $H^{p}[0,2\pi]$ since $|\gamma'(t)|>0$ is analytic (cf. \cite[Theorem 3.1]{DLL2021}). Here, $\widetilde{S}, \widetilde{L}, \widetilde{K}, \widetilde{R}, \widetilde{H}: H^{p}[0,2\pi]\rightarrow H^{p+4}[0,2\pi]$ are bounded, and they are integral operators  of the form 
\begin{align*}
	(Q\phi)(t)&=
	\int_0^{2\pi}\ln\Big(4\sin^2\frac{t-\zeta}{2}\Big)q_1(t,\zeta)\phi(\zeta)\mathrm{d}\zeta+\int_0^{2\pi}q_2(t,\zeta)\phi(\zeta)\mathrm{d}\zeta\\
	&\overset{\text{def}}{=}(Q_1\phi)(t)+(Q_2\phi)(t),
\end{align*}
where $q_1$ satisfies $q_1(t,t)=\partial_tq_1(t,t)=\partial^2_{tt}q_1(t,t)=0$.

Hence the integral equations \eqref{direct parafield} can be reformulated into the operator form
\begin{align}\label{Matrixequation}
	\mathcal{A}\psi:=(\mathcal{N}+\mathcal{D})\psi = \eta,
\end{align}
where $\psi=(\psi_1,\psi_2)^\top$, $\eta=(\eta_1, \eta_2)^\top$, and
\begin{align*}
	\mathcal{N}&=\left[                  
	\begin{array}{cc}
		I    & 0 \\ 
		T_0  & -I 
	\end{array}
	\right]+\left[                  
	\begin{array}{cc}
		E_2S_2        & S_0+E_1S_2 \\ 
		\frac{1}{2}E_1S_0+E_3S_1+E_6S_2  & -E_2S_2 
	\end{array}
	\right]\overset{\text{def}}{=}\mathcal{N}_1+\mathcal{N}_2,\\
	\mathcal{D}&=\left[                  
	\begin{array}{cc}
		\widetilde{L}    & \widetilde{S}-M_0 \\ 
		\widetilde{R}-\widetilde{H}-M_0-\frac{1}{2}E_1M_0  & \widetilde{K}
	\end{array}
	\right]
\end{align*}
with $E_6\phi:=E_4\phi-E_5\phi$. It can be seen that 
$\mathcal{N}: H^{p}[0,2\pi]^2\rightarrow H^{p-1}[0,2\pi]^2$ and $\mathcal{D}: H^{p}[0,2\pi]^2\rightarrow H^{p+4}[0,2\pi]^2$ are bounded operators.

\begin{theorem}\label{isomorphism}
	For all $p\geq0$, the operator $\mathcal{N}^2: H^p[0,2\pi]^2\rightarrow H^{p+2}[0,2\pi]^2$ satisfies
	\[
		\mathcal{N}^2\psi=(\mathcal{U}+\mathcal{J})\psi\quad \forall\psi\in H^p[0,2\pi]^2,
	\]
	where $\mathcal{J}: H^p[0,2\pi]^2\rightarrow H^{p+2}[0,2\pi]^2$ is a compact operator and $\mathcal{U}: H^p[0,2\pi]^2\rightarrow H^{p+2}[0,2\pi]^2$ is an isomorphism given by
	\begin{align*}
		\mathcal{U}=\left[                  
		\begin{array}{cc}
			I+S_0T_0+E_1S_0S_0   & 0 \\ 
			0        & I+T_0S_0+E_1S_0S_0 
		\end{array}
		\right].
	\end{align*}
\end{theorem}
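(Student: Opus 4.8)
The plan is to exploit the block structure by expanding
$\mathcal{N}^2=\mathcal{N}_1^2+\mathcal{N}_1\mathcal{N}_2+\mathcal{N}_2\mathcal{N}_1+\mathcal{N}_2^2$, to peel off a principal part of order $-2$ that equals $\mathcal{U}$, and to absorb everything of higher smoothing into $\mathcal{J}$. Although $\mathcal{N}$ only maps $H^p[0,2\pi]^2\to H^{p-1}[0,2\pi]^2$, the apparent loss of derivatives in $\mathcal{N}^2$ will cancel. Two exact algebraic facts drive all the cancellations, and I would record them first. The block operator $\mathcal{N}_1$ is an involution: a direct $2\times2$ computation gives $\mathcal{N}_1^2=\mathrm{diag}(I,I)$, since its $(2,1)$ entry reduces to $T_0-T_0=0$ and no $T_0^2$ ever appears. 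Second, and crucially, the model operators satisfy the exact Fourier-multiplier identities $S_0T_0=T_0S_0=-I$; indeed $\widetilde{S}_0$ and $\widetilde{T}_0$ act on $e^{\mathrm{i}mt}$ $(m\neq0)$ by $|m|^{-1}$ and $-|m|$ respectively, while the two $M_0$ parts agree on the zero mode, so the products equal $-I$ on every Fourier mode. These identities are precisely what annihilate the order-$0$ contributions of $\mathcal{N}^2$, and they reduce the diagonal of $\mathcal{U}$ to $\mathrm{diag}(E_1S_0S_0,E_1S_0S_0)$.

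Next I would carry out the block multiplication with the abbreviations $a=E_2S_2$, $b=S_0+E_1S_2$, $c=\tfrac12 E_1S_0+E_3S_1+E_6S_2$, $d=-E_2S_2=-a$ for the entries of $\mathcal{N}_2$. Counting smoothing orders ($T_0$ of order $+1$; $S_0,S_1,S_2$ of order $-1,-2,-3$; each $E_j$ of order $0$), the off-diagonal blocks of $\mathcal{N}^2$ are $ab+bd=[a,b]$ and $T_0a+dT_0+ca+dc$, whose leading pieces are the commutator $[T_0,E_2S_2]$ together with products carrying a factor $E_2S_2$; all of these map $H^p\to H^{p+3}$, so they lie in $\mathcal{J}$ and match the vanishing off-diagonal entries of $\mathcal{U}$. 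For the $(1,1)$ block I would combine the identity $I$ from $\mathcal{N}_1^2$, the cross term $bT_0=S_0T_0+E_1S_2T_0$, and the $\mathcal{N}_2^2$ term $a^2+bc$: the identity cancels via $I+S_0T_0=0$, and the genuinely order-$(-2)$ survivors are $E_1S_2T_0$ and the leading part $\tfrac12 S_0E_1S_0$ of $bc$, everything else being of order $\le-3$. The $(2,2)$ block is handled identically using $T_0S_0=-I$ and the survivors $T_0E_1S_2$ and $\tfrac12 E_1S_0S_0$ from $cb$.

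The heart of the argument, and the step I expect to be the main obstacle, is to show that these order-$(-2)$ survivors assemble exactly into $E_1S_0S_0$ modulo compact operators. This needs two composition facts about the model operators. First, a commutator estimate: interchanging a smooth multiplier $E_j$ with any of $S_0,S_1,S_2$ gains one extra degree of smoothing, so for instance $S_0E_1S_0=E_1S_0S_0+(\text{order}\le-3)$ and $T_0E_1S_2=E_1T_0S_2+(\text{order}\le-3)$. Second, the symbol identities $S_2T_0=\tfrac12 S_0S_0+(\text{order}\le-3)$ and $T_0S_2=\tfrac12 S_0S_0+(\text{order}\le-3)$, which I would verify by comparing multipliers: $S_2$ has multiplier $\sim-\tfrac12|m|^{-3}$, $T_0$ has multiplier $-|m|$, and $S_0S_0$ has multiplier $|m|^{-2}$, so the products agree to leading order. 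Granting these, the $(1,1)$ and $(2,2)$ principal parts each become $\tfrac12 E_1S_0S_0+\tfrac12 E_1S_0S_0=E_1S_0S_0$, which together with $I+S_0T_0=I+T_0S_0=0$ is exactly the diagonal of $\mathcal{U}$. The careful bookkeeping of these commutators and symbol remainders, keeping every discarded term of order at most $-3$, is where the real work lies; it rests on the kernel splittings and mapping properties established above and in \cite{DLL2021}.

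Finally I would dispatch the two remaining conclusions. Compactness of $\mathcal{J}\colon H^p[0,2\pi]^2\to H^{p+2}[0,2\pi]^2$ follows because every term collected into $\mathcal{J}$ maps $H^p$ into $H^{p+3}$ while the embedding $H^{p+3}[0,2\pi]\hookrightarrow H^{p+2}[0,2\pi]$ is compact. For the isomorphism property of $\mathcal{U}=\mathrm{diag}(E_1S_0S_0,E_1S_0S_0)$, I would observe that $S_0S_0$ has the nonvanishing multiplier $|m|^{-2}$ for $m\neq0$ and $-1$ for $m=0$, hence is an isomorphism $H^p\to H^{p+2}$, and that $E_1$ is an isomorphism on $H^{p+2}$ since $|\gamma'|$ is positive and analytic (cf. \cite[Theorem 3.1]{DLL2021}); the composition is therefore an isomorphism $H^p[0,2\pi]^2\to H^{p+2}[0,2\pi]^2$, which completes the proof.
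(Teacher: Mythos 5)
Your proposal is correct and shares the paper's overall architecture: diagonalization of $T_0$ and $S_0$ by the trigonometric basis, the exact identities $I+S_0T_0=I+T_0S_0=0$ (so that $\mathcal{U}$ collapses to $\mathrm{diag}(E_1S_0S_0,E_1S_0S_0)$, an isomorphism since the multiplier $\xi_m^2$ never vanishes and $E_1$ is an isomorphism), the block expansion of $\mathcal{N}^2$, and the collection of all terms mapping $H^p$ into $H^{p+3}$ into the compact part (your order-$(\le-3)$ terms are exactly the paper's $F_1,\dots,F_4$ in \eqref{F1234} and the commutator $J_3$). Where you genuinely depart from the paper is the key cancellation step for the order-$(-2)$ survivors $\tfrac12 S_0E_1S_0+E_1S_2T_0$ and $\tfrac12 E_1S_0S_0+T_0E_1S_2$. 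The paper argues at the kernel level: it uses $\widetilde T_0\psi=\widetilde S_0\psi''$, a Leibniz expansion of $(E_1S_2\varphi)''$, and an integration by parts giving $S_2\widetilde T_0=\tfrac12\widetilde S_0\widetilde S_0+B_2\widetilde S_0$, thereby generating the explicit remainder operators $B_1,B_2,B_3,\widetilde B_3$ and $J_1,J_2,J_3$. You instead observe that $S_0,S_2,T_0$ are convolution operators, hence commuting Fourier multipliers, compute the symbol of $S_2$ (equal to $-\frac{1}{2|m|(m^2-4)}$ for $|m|\ge3$), and compare $S_2T_0=T_0S_2$ with $\tfrac12 S_0S_0$ directly, moving the smooth factor $E_1$ across by commutator estimates. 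Your route is shorter and more transparent for this theorem; what the paper's heavier computation buys is the explicit integral-operator form of the remainders, which is reused verbatim in the quadrature and full-discretization analysis (Remark \ref{estimate} and Theorem \ref{Jn-estimate} need $B_1,B_2,\widetilde B_3$ and $J_1,J_2,J_3$ as concrete kernels so that their interpolatory discretizations can be estimated), so the explicitness is not wasted effort in the paper.

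Two points should be firmed up in your write-up, though neither is a genuine gap. First, ``the products agree to leading order'' is not by itself sufficient: you need $S_2T_0-\tfrac12 S_0S_0$ to be of order at most $-3$. The explicit symbols give a difference of $\frac{2}{m^2(m^2-4)}=O(|m|^{-4})$ for $|m|\ge3$ plus a finite-rank correction supported on the low modes, so the claim holds with room to spare, but it is the quantitative decay of the symbol difference, not mere leading-order agreement, that must be recorded. Second, the step $T_0E_1S_2=E_1T_0S_2+(\text{order}\le-3)$ requires $[T_0,E_1]$ to be of order $0$, which is not an instance of your stated commutator fact for $S_0,S_1,S_2$; it does follow, for example, from $\widetilde T_0=\widetilde S_0\frac{d^2}{dt^2}$ and the Leibniz rule, $[\widetilde T_0,E_1]=\widetilde S_0[\frac{d^2}{dt^2},E_1]+[\widetilde S_0,E_1]\frac{d^2}{dt^2}$, both terms being of order $0$ --- which is in essence the computation the paper performs when it expands $T_0E_1S_2$.
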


\begin{proof} 
	Let $f_m(t):=\mathrm{e}^{\mathrm{i}mt}, m\in\mathbb{Z}$ be the trigonometric basis functions. A simple calculation yields
	\begin{align*}
		T_0f_m&=\lambda_mf_m, \quad\lambda_m=
		\begin{cases}
			-|m|, &m=\pm1,\pm2,\cdots,\\
			\mathrm{i},      &m=0,
		\end{cases}
		\\
		S_0f_m&=\xi_mf_m, \quad\xi_m=
		\begin{cases}
			\frac{1}{|m|}, &m=\pm1,\pm2,\cdots,\\
			\mathrm{i},      &m=0.
		\end{cases}
	\end{align*}
	It is clear to note that $S_0: H^{p}[0,2\pi]\rightarrow H^{p+1}[0,2\pi]$ and $T_0: H^{p}[0,2\pi]\rightarrow H^{p-1}[0,2\pi]$ are isomorphic for any $p\geq0$. Since $E_1$ is isomorphic from $H^{p}[0,2\pi]$ into $H^{p}[0,2\pi]$ and $(I+T_0S_0)f_m=(I+S_0T_0)f_m=0$, we have  
	\begin{align}\label{isomor}
		\big(E_1^{-1}(I+T_0S_0)+S_0S_0\big)f_m=	\big(E_1^{-1}(I+S_0T_0)+S_0S_0\big)f_m=\xi_m^2f_m,
	\end{align}
	which implies that $\mathcal{U}$ is isomorphic from $H^p[0,2\pi]^2$ to $H^{p+2}[0,2\pi]^2$. 
	
	A straightforward calculation yields 
	\begin{align*}
		\mathcal{N}^2
		&=\left[                  
		\begin{array}{cc}
			\frac{1}{2}S_0E_1S_0+E_1S_2T_0    & 0 \\ 
			T_0E_2S_2-E_2S_2T_0   & \frac{1}{2}E_1S_0S_0+T_0E_1S_2
		\end{array}
		\right]+\mathcal{J}_1\\
		&\quad+\left[                  
		\begin{array}{cc}
			I+S_0T_0   & 0 \\ 
			0   & I+T_0S_0 
		\end{array}
		\right]
	\end{align*}
	where 
	\begin{align*}
		\mathcal{J}_1&=\left[                  
		\begin{array}{cc}
			F_1   & F_2 \\ 
			F_3   & F_4
		\end{array}
		\right]
	\end{align*}
	with the entries being defined by 
	\begin{align}\label{F1234}
		\begin{split}
			F_1&=2E_2S_2+E_2S_2E_2S_2+S_0(E_3S_1+E_6S_2)+E_1S_2\big(\frac{1}{2}E_1S_0+E_3S_1+E_6S_2\big),\\
			F_2&=E_2S_2(S_0+E_1S_2)-(S_0+E_1S_2)E_2S_2,\\
			F_3&=\big(\frac{1}{2}E_1S_0+E_3S_1+E_6S_2\big)E_2S_2-E_2S_2\big(\frac{1}{2}E_1S_0+E_3S_1+E_6S_2\big),\\
			F_4&=2E_2S_2+E_2S_2E_2S_2+(E_3S_1+E_6S_2)S_0+\big(\frac{1}{2}E_1S_0+E_3S_1+E_6S_2\big)E_1S_2.
		\end{split}
	\end{align}
	Since $F_j, j=1,2,3,4$ are bounded from $H^p[0,2\pi]$ into $H^{p+3}[0,2\pi]$, $\mathcal{J}_1$ is bounded from $H^p[0,2\pi]$ into $H^{p+3}[0,2\pi]$, which shows that $\mathcal{J}_1$ is compact from $H^p[0,2\pi]$ into $H^{p+2}[0,2\pi]$.
	
	Using the identities
	\begin{align*}
		(T_0E_1S_2\varphi)(t)&=\frac{1}{2\pi}\int_{0}^{2\pi}\cot\frac{\zeta-t}{2}(E_1S_2\varphi)'(\zeta)\mathrm{d}\zeta+\frac{\mathrm{i}}{2\pi}\int_{0}^{2\pi}(E_1S_2\varphi)(\zeta)\mathrm{d}\zeta\\
		&=-\frac{1}{2\pi}\int_{0}^{2\pi}\ln\Big(4\sin^2\frac{\zeta-t}{2}\Big)(E_1S_2\varphi)''(\zeta)\mathrm{d}\zeta+(M_0E_1S_2\varphi)(t)\\
		&=\big(\widetilde{S}_0\{(E_1S_2\varphi)''\}\big)(t)+(M_0E_1S_2\varphi)(t)
	\end{align*}
	and 
	\begin{align*}
		(E_1S_2\varphi)''(\zeta)=&-\frac{\kappa^2}{8\pi}\int_0^{2\pi}\frac{\partial^2}{\partial{\zeta^2}}\biggl\{|\gamma'(\zeta)|^2\ln\Big(4\sin^2\frac{\zeta-s}{2}\Big)\sin^2(\zeta-s)\biggr\}\varphi(s)\mathrm{d}s\\
		=&\frac{1}{2}(E_1\widetilde{S}_0\varphi)(\zeta)+(E_7S_2\varphi)(\zeta)+4(E_3B_1\varphi)(\zeta)+(E_1B_2\varphi)(\zeta),
	\end{align*}
	where $E_7\phi=2\kappa^2(|\gamma''|^2+\gamma'\cdot \gamma''')\phi$, and 
	\begin{align*}
		(B_1\varphi)(\zeta)&=-\frac{1}{8\pi}\int_{0}^{2\pi}\ln\Big(4\sin^2\frac{\zeta-s}{2}\Big)\sin2(\zeta-s)\varphi(s)\,\mathrm{d}s-\frac{1}{8\pi}\int_{0}^{2\pi}m_1(\zeta,s)\varphi(s)\mathrm{d}s,\\
		(B_2\varphi)(\zeta)&=-\frac{1}{4\pi}\int_{0}^{2\pi}\ln\Big(4\sin^2\frac{\zeta-s}{2}\Big)\big[\cos2(\zeta-s)-1\big]\varphi(s)\,\mathrm{d}s
		\\&\quad-\frac{1}{4\pi}\int_{0}^{2\pi}m_2(\zeta,s)\varphi(s)\mathrm{d}s	
	\end{align*}
	with 
	\begin{align*}
		m_1(\zeta,s)&=\cot\frac{\zeta-s}{2}\sin^2(\zeta-s),\\
		m_2(\zeta,s)&=\cot\frac{\zeta-s}{2}\sin2(\zeta-s)-\cos^2\frac{\zeta-s}{2}
	\end{align*}
	being analytic functions, we obtain from $S_0S_0=\widetilde{S}_0\widetilde{S}_0+M_0S_0$ that
	\begin{align*}
		&\frac{1}{2}E_1S_0S_0+T_0E_1S_2\\
		&=\frac{1}{2}E_1S_0S_0+\Big(\frac{1}{2}\widetilde{S}_0E_1\widetilde{S}_0+\widetilde{S}_0E_7S_2+4\widetilde{S}_0E_3B_1+\widetilde{S}_0E_1B_2+M_0E_1S_2\Big)\\
		&=\frac{1}{2}E_1S_0S_0+\frac{1}{2}E_1\widetilde{S}_0\widetilde{S}_0+\frac{1}{2}\big(\widetilde{S}_0E_1-E_1\widetilde{S}_0\big)\widetilde{S}_0+\widetilde{S}_0E_7S_2\\
		&\qquad\qquad\qquad\qquad\qquad~~+4\widetilde{S}_0E_3B_1+\widetilde{S}_0E_1B_2+M_0E_1S_2\\
		&=E_1S_0S_0+J_1, 
	\end{align*}
	where
	\begin{align}\label{J1}
		J_1:=\frac{1}{2}\widetilde{B}_3\widetilde{S}_0+\widetilde{S}_0E_7S_2+4\widetilde{S}_0E_3B_1+\widetilde{S}_0E_1B_2-\frac{1}{2}E_1M_0S_0+M_0E_1S_2
	\end{align}
	is compact from $H^p[0,2\pi]$ into $H^{p+2}[0,2\pi]$ due to the fact that $B_2$ is bounded from $H^p[0,2\pi]$ into $H^{p+3}[0,2\pi]$, $B_1$ and
	\begin{align*}
		(\widetilde{B}_3\varphi)(t):&=(\widetilde{S}_0E_1\varphi)(t)-(E_1\widetilde{S}_0\varphi)(t)\\
		&=-\frac{\kappa^2}{2\pi}\int_{0}^{2\pi}\ln\Big(4\sin^2\frac{t-\zeta}{2}\Big)\Big(|\gamma'(\zeta)|^2-|\gamma'(t)|^2\Big)\varphi(\zeta)\,\mathrm{d}\zeta	
	\end{align*}
	are bounded from $H^p[0,2\pi]$ into $H^{p+2}[0,2\pi]$.

	In addition, using the integration by parts yields 
	\begin{align*}
		&(S_2\widetilde{T}_0\varphi)(t)\\
		&=-\frac{1}{8\pi}\int_{0}^{2\pi}\ln\Big(4\sin^2\frac{t-\zeta}{2}\Big)\sin^2(t-\zeta)(\widetilde{T}_0\varphi)(\zeta)\,\mathrm{d}\zeta\\
		&=\frac{1}{16\pi^2}\int_{0}^{2\pi}\ln\Big(4\sin^2\frac{\zeta-t}{2}\Big)\sin^2(t-\zeta)\frac{\partial}{\partial\zeta}\Biggl\{\int_{0}^{2\pi}\ln\Big(4\sin^2\frac{\zeta-s}{2}\Big)\varphi'(s)\,\mathrm{d}s\Biggr\}\,\mathrm{d}\zeta\\
		&=-\frac{1}{16\pi^2}\int_{0}^{2\pi}\Big(\int_{0}^{2\pi}\ln\Big(4\sin^2\frac{\zeta-s}{2}\Big)\varphi'(s)\,\mathrm{d}s\Big)\frac{\partial}{\partial\zeta}\\
		&\qquad\qquad\qquad\qquad\qquad\qquad\qquad\qquad\qquad\Bigl\{\ln\Big(4\sin^2\frac{\zeta-t}{2}\Big)\sin^2(t-\zeta)\Bigr\}\,\mathrm{d}\zeta\\
		&=\frac{1}{16\pi^2}\int_{0}^{2\pi}\Big(\int_{0}^{2\pi}\cot\frac{s-\zeta}{2}\varphi(s)\,\mathrm{d}s\Big)\Bigl\{\cot\frac{\zeta-t}{2}\sin^2(\zeta-t)\\
		&\qquad\qquad\qquad\qquad\qquad\qquad\qquad\qquad\qquad+\ln\Big(4\sin^2\frac{\zeta-t}{2}\Big)\sin2(\zeta-t)\Bigr\}\,\mathrm{d}\zeta\\
		&=\frac{1}{16\pi^2}\int_{0}^{2\pi}\Bigl\{2m_2(\zeta,t)+2\ln\Big(4\sin^2\frac{\zeta-t}{2}\Big)\cos2(\zeta-t)\Bigr\}\\
		&\qquad\qquad\qquad\qquad\qquad\qquad\qquad\qquad\qquad\Big(\int_{0}^{2\pi}\ln\Big(4\sin^2\frac{\zeta-s}{2}\Big)\varphi(s)\,\mathrm{d}s\Big)\,\mathrm{d}\zeta\\
		&=\frac{1}{2}\widetilde{S}_0\widetilde{S}_0\varphi+B_2\widetilde{S}_0\varphi.
	\end{align*}
	Combining $E_1S_2T_0=E_1S_2\widetilde{T}_0+E_1S_2M_0$ and $S_0E_1S_0=E_1S_0S_0+B_3S_0$, where
	\begin{align*}
		(B_3\varphi)(t):&=-\frac{\kappa^2}{2\pi}\int_{0}^{2\pi}\ln\Big(4\sin^2\frac{t-\zeta}{2}\Big)\Big(|\gamma'(\zeta)|^2-|\gamma'(t)|^2\Big)\varphi(\zeta)\,\mathrm{d}\zeta\\
		&\quad +\frac{\mathrm{i}\kappa^2}{2\pi}\int_{0}^{2\pi}
		\Big(|\gamma'(\zeta)|^2-|\gamma'(t)|^2\Big)\varphi(\zeta)\,\mathrm{d}\zeta,
	\end{align*}
	we obtain 
	\begin{align*}
		\frac{1}{2}S_0E_1S_0+E_1S_2T_0 =&\Big(\frac{1}{2}E_1S_0S_0+\frac{1}{2}B_3S_0\Big)+\frac{1}{2}E_1\widetilde{S}_0\widetilde{S}_0+E_1B_2\widetilde{S}_0+E_1S_2M_0\\
		=&E_1S_0S_0+J_2,
	\end{align*}
	where $J_2:=\frac{1}{2}B_3S_0+E_1B_2\widetilde{S}_0-\frac{1}{2}E_1M_0S_0+E_1S_2M_0$ is compact from $H^p[0,2\pi]$ into $H^{p+2}[0,2\pi]$, since $B_2: H^p[0,2\pi]\rightarrow H^{p+3}[0,2\pi]$ and $B_3: H^p[0,2\pi]\rightarrow H^{p+2}[0,2\pi]$ are bounded.
	
	Analogously, we observe that
	\begin{align*}
		&T_0E_2S_2-E_2S_2T_0\\
		&=\Big(\frac{1}{2}\widetilde{S}_0E_2\widetilde{S}_0+\widetilde{S}_0E_8S_2+2\widetilde{S}_0E_9B_1+\widetilde{S}_0E_2B_2+M_0E_2S_2\Big)\\
		&\qquad\qquad\qquad\qquad\qquad\qquad\qquad\quad~~-\frac{1}{2}E_2\widetilde{S}_0\widetilde{S}_0-E_2B_2\widetilde{S}_0-E_2S_2M_0\\
		&=\frac{1}{2}(\widetilde{S}_0E_2-E_2\widetilde{S}_0)\widetilde{S}_0+\widetilde{S}_0E_8S_2+2\widetilde{S}_0E_9B_1+\widetilde{S}_0E_2B_2-E_2B_2\widetilde{S}_0:=J_3
	\end{align*}
	is compact from $H^p[0,2\pi]$ into $H^{p+2}[0,2\pi]$, where $E_8\phi=\kappa^2n\cdot \gamma^{(4)}\phi$ and $E_9\phi=\kappa^2n\cdot \gamma'''\phi$ are bounded from $H^p[0,2\pi]$ into $H^{p}[0,2\pi]$, and $\widetilde{B}_4:=\widetilde{S}_0E_2-E_2\widetilde{S}_0$ is bounded from $H^p[0,2\pi]$ into $H^{p+2}[0,2\pi]$.
	
	The proof is completed by defining the operator
	\begin{align}\label{Jdef}
		\mathcal{J}=\mathcal{J}_1+\mathcal{J}_2,
	\end{align}
	where
	\begin{align*}
		\mathcal{J}_1=\left[                  
		\begin{array}{cc}
			F_1   & F_2 \\ 
			F_3   & F_4
		\end{array}
		\right],\quad
		\mathcal{J}_2=\left[                  
		\begin{array}{cc}
			J_2  & 0 \\ 
			J_3  & J_1
		\end{array}
		\right].
	\end{align*}
\end{proof}

The well-posedness of the boundary integral equation \eqref{Matrixequation} is stated as follows. 

\begin{theorem}\label{exiuni}
For any $p\geq0$, the operator equation \eqref{Matrixequation} admits a unique solution in $H^p[0,2\pi]^2$. 
\end{theorem}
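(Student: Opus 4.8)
The plan is to deduce the well-posedness from Theorem \ref{isomorphism} by passing to the square of $\mathcal{A}$, since that is exactly where the smoothing identity $\mathcal{N}^2=\mathcal{U}+\mathcal{J}$ can be exploited. First I would compute
\begin{align*}
	\mathcal{A}^2=(\mathcal{N}+\mathcal{D})^2=\mathcal{N}^2+\mathcal{N}\mathcal{D}+\mathcal{D}\mathcal{N}+\mathcal{D}^2=\mathcal{U}+\widetilde{\mathcal{J}},
\end{align*}
where $\widetilde{\mathcal{J}}:=\mathcal{J}+\mathcal{N}\mathcal{D}+\mathcal{D}\mathcal{N}+\mathcal{D}^2$. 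Using the mapping properties recorded before the statement, namely $\mathcal{N}:H^p[0,2\pi]^2\to H^{p-1}[0,2\pi]^2$ and $\mathcal{D}:H^p[0,2\pi]^2\to H^{p+4}[0,2\pi]^2$ bounded, each of $\mathcal{N}\mathcal{D}$, $\mathcal{D}\mathcal{N}$, and $\mathcal{D}^2$ is bounded from $H^p$ into $H^{p+3}$ (or better) and hence, by the compact embedding $H^{p+3}\hookrightarrow H^{p+2}$, compact from $H^p$ into $H^{p+2}$. Combined with the compactness of $\mathcal{J}$ supplied by Theorem \ref{isomorphism}, this shows $\widetilde{\mathcal{J}}:H^p\to H^{p+2}$ is compact, whereas $\mathcal{U}:H^p\to H^{p+2}$ is an isomorphism. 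Thus $\mathcal{A}^2=\mathcal{U}(I+\mathcal{U}^{-1}\widetilde{\mathcal{J}})$ with $\mathcal{U}^{-1}\widetilde{\mathcal{J}}$ compact on $H^p[0,2\pi]^2$.

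By the Riesz--Fredholm theory, $I+\mathcal{U}^{-1}\widetilde{\mathcal{J}}$, and therefore $\mathcal{A}^2:H^p\to H^{p+2}$, is an isomorphism as soon as it is injective. The decisive point is that injectivity of $\mathcal{A}^2$ reduces to that of $\mathcal{A}$ because it is a single operator squared: if $\mathcal{A}^2\psi=0$, then $\omega:=\mathcal{A}\psi$ satisfies $\mathcal{A}\omega=0$, so Theorem \ref{unique2}, through the one-to-one correspondence between \eqref{direct field} and the parameterized system \eqref{Matrixequation} furnished by $\gamma$, $|\gamma'|$, and Maue's formula, forces $\omega=0$; a second application of Theorem \ref{unique2} then gives $\psi=0$. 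Hence $\ker\mathcal{A}^2=\ker\mathcal{A}=\{0\}$, and $\mathcal{A}^2:H^p\to H^{p+2}$ is an isomorphism for every $p\ge0$. I expect the routine but lengthy part here to be the compactness bookkeeping for $\widetilde{\mathcal{J}}$; the conceptual content is entirely contained in Theorem \ref{isomorphism}.

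It remains to descend from $\mathcal{A}^2$ to $\mathcal{A}$. Since the data $\eta=(2f_1\circ\gamma,\,2(f_2\circ\gamma)|\gamma'|)$ originates from the analytic incident wave, it lies in $H^{q}[0,2\pi]^2$ for every $q$. Because $\mathcal{A}^2$ is an isomorphism on the entire Sobolev scale, I would set $\chi:=(\mathcal{A}^2)^{-1}\eta$, which then belongs to $H^q[0,2\pi]^2$ for all $q$ and is therefore smooth, and put $\psi:=\mathcal{A}\chi$. This $\psi$ is smooth, lies in $H^p[0,2\pi]^2$ for every $p$, and satisfies $\mathcal{A}\psi=\mathcal{A}^2\chi=\eta$, which yields existence. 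Uniqueness in $H^p[0,2\pi]^2$ is then immediate from Theorem \ref{unique2}.

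The main obstacle is the logical junction in the second paragraph rather than any individual computation. The operator $\mathcal{A}$ itself is \emph{not} Fredholm between $H^p$ and $H^{p-1}$, because the identity block $I:H^p\hookrightarrow H^{p-1}$ appearing in $\mathcal{N}$ is not surjective; one therefore cannot run the Riesz--Fredholm argument on $\mathcal{A}$ directly and must route it through the genuine smoothing gain of $\mathcal{A}^2$. Care is needed to ensure that injectivity truly transfers, which works precisely because $\mathcal{A}^2$ is the square of one operator so that its kernel coincides with that of $\mathcal{A}$, and that the solution produced for the squared equation is recovered as a bona fide solution of \eqref{Matrixequation}; the smoothness of $\eta$ together with the isomorphism property of $\mathcal{A}^2$ across the full scale is exactly what guarantees this.
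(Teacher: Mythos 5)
Your proposal is correct and follows essentially the same route as the paper: square the operator, invoke Theorem \ref{isomorphism} together with the compactness of $\mathcal{J}+\mathcal{N}\mathcal{D}+\mathcal{D}\mathcal{N}+\mathcal{D}^2$ from $H^p[0,2\pi]^2$ into $H^{p+2}[0,2\pi]^2$, apply the Riesz--Fredholm theory, and transfer injectivity from Theorem \ref{unique2}. The only cosmetic difference is the descent step: you solve $\mathcal{A}^2\chi=\eta$ (exploiting the smoothness of $\eta$) and set $\psi=\mathcal{A}\chi$, whereas the paper solves $\mathcal{A}^2\psi=\mathcal{A}\eta$ and recovers $\mathcal{A}\psi=\eta$ from the injectivity of $\mathcal{A}$ via $\mathcal{A}(\mathcal{A}\psi-\eta)=0$ --- a mirror image of the same trick.
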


\begin{proof}
	It follows from Theorem \ref{unique2} that the operator $\mathcal{A}=\mathcal{N}+\mathcal{D}$ in \eqref{Matrixequation} is injective. Hence $\mathcal{A}^2$ is injective. Consider the operator equation
	\begin{align}\label{Matrixeqn}
		\mathcal{A}^2\psi=(\mathcal{N}+\mathcal{D})^2\psi=\mathcal{A}\eta. 
	\end{align}
	Noting the boundedness of $\mathcal{N}: H^{p}[0,2\pi]^2\rightarrow H^{p-1}[0,2\pi]^2$ and $\mathcal{D}: H^{p}[0,2\pi]^2\rightarrow H^{p+4}[0,2\pi]^2$, we have from Theorem \ref{isomorphism} that 
	\begin{align}\label{exiuni-2}
		(\mathcal{N}+\mathcal{D})^2=\mathcal{U}+\mathcal{J}+\mathcal{N}\mathcal{D}+\mathcal{D}\mathcal{N}+\mathcal{D}^2,
	\end{align}
	where $\mathcal{U}$ is isomorphic and $\mathcal{J}+\mathcal{N}\mathcal{D}+\mathcal{D}\mathcal{N}+\mathcal{D}^2$ is compact from $H^p[0,2\pi]^2$ into $H^{p+2}[0,2\pi]^2$ for $p\geq0$. It follows from the Fredholm alternative that 
	the operator equation \eqref{Matrixeqn} has a unique solution $\psi\in H^{p}[0,2\pi]^2$, which shows that $\psi\in H^{p}[0,2\pi]^2$ is also the solution to $\mathcal{A}\psi=\eta$ due to $\mathcal{A}(\mathcal{A}\psi-\eta)=0$ and the injectivity of $\mathcal{A}$. 
\end{proof}

\section{Collocation method}\label{s_cm}

Combining \eqref{Matrixeqn} and \eqref{exiuni-2} yields 
\[
\mathcal{U}\psi+(\mathcal{J}+\mathcal{V})\psi=\mathcal{A}\eta,
\]
where $\mathcal{V}=\mathcal{N}\mathcal{D}+\mathcal{D}\mathcal{N}+\mathcal{D}^2$
is compact from $H^p[0,2\pi]^2$ into $H^{p+2}[0,2\pi]^2$. Equivalently, we consider the operator equation
\begin{align}\label{Matrixequation2}
	\mathcal{E}^{-1}\mathcal{U}\psi+\mathcal{E}^{-1}(\mathcal{J}+\mathcal{V})\psi=\mathcal{E}^{-1}\mathcal{A}\eta,
\end{align}
where 
\begin{align*}
	\mathcal{E}=\left[                  
	\begin{array}{cc}
		E_1  & 0 \\ 
		0        & E_1
	\end{array}
	\right], \quad \mathcal{E}^{-1}\mathcal{U}=\left[                  
	\begin{array}{cc}
		E_1^{-1}(I+S_0T_0)+S_0S_0  & 0 \\ 
		0        & E_1^{-1}(I+T_0S_0)+S_0S_0
	\end{array}
	\right].
\end{align*}
It is clear to note that $\mathcal{E}^{-1}\mathcal{U}$ is isomorphic and $\mathcal{E}^{-1}(\mathcal{J}+\mathcal{V})$ is compact from $H^p[0,2\pi]^2$ into $H^{p+2}[0,2\pi]^2$ for $p\geq0$. 

In this section, we adopt the collocation method to study the convergence of the semi- and full-discretization of the boundary integral equation \eqref{Matrixequation2}. A related work can be found in \cite{DLL2021} on the convergence analysis of the collocation method for solving the elastic obstacle scattering problem.

\subsection{Semi-discrete scheme}

Denote by $X_n$ the space of $n$-th order trigonometric polynomials. For any $\varphi\in X_n$, it has the form 
\[
	\varphi(t)=\sum_{m=0}^n a_m\cos mt+\sum_{m=1}^{n-1}b_m\sin mt.
\]
Let $P_n: H^p[0,2\pi]\rightarrow X_n$ be the interpolation operator, i.e., $(P_ng)(\zeta_j^{(n)})=g(\zeta_j^{(n)})$, where the interpolation points $\zeta_j^{(n)}:=\pi j/n$,
$j=0,\cdots,2n-1$. Clearly, $P_n$ is bounded. 

Let $X_n^2=\{\psi=(\psi_1,\psi_2)^\top: \psi_1\in X_n,
\psi_2\in X_n\}$ and define the interpolation operator $\mathcal{P}_n:
H^p[0,2\pi]^2\rightarrow X_n^2$ by
$$
\mathcal{P}_ng=(P_ng_1,P_ng_2)^\top\quad\forall\,g=(g_1,g_2)\in H^p[0,2\pi]^2.
$$
It is clear to note that $X_n^2$ is unisolvent with respect to the points
$\{\zeta_j^{(n)}\}_{j=0}^{2n-1}$. By \cite[Theorem
11.8]{Kress2014-book}, we have 
\begin{align}\label{interpolationerror}
	\|\mathcal{P}_ng-g\|_{q}\leq\frac{C}{n^{p-q}}\|g\|_{p}\quad\forall\,g\in H^p[0,2\pi]^2,
\end{align}
where $0\leq q\leq p, \frac{1}{2}<p$, and $C$ is a positive constant depending on $p$ and $q$.

Hence $\psi=(\psi_1,\psi_2)^\top$ can be approximated by  $\psi^n=(\psi^n_1,\psi^n_2)^\top\in X_n^2$,
which satisfies the approximate operator equation
\begin{align}\label{semicollocation}
	\mathcal{E}^{-1}\mathcal{U}\psi^n +
	\mathcal{P}_n[\mathcal{E}^{-1}(\mathcal{J}+\mathcal{V})]\psi^n =
	\mathcal{P}_n(\mathcal{E}^{-1}\mathcal{A})\eta,
\end{align}
where $\psi^n$
satisfies $\mathcal{P}_n(\mathcal{E}^{-1}\mathcal{U})\psi^n=\mathcal{E}^{-1}\mathcal{U}\psi^n$ by noting \eqref{isomor}. 

The unique solvability of \eqref{semicollocation} and the error estimate of the solution are given in the following theorem. Based on the fact that $\mathcal{J}, \mathcal{V}: H^p[0,2\pi]^2\rightarrow
H^{p+3}[0,2\pi]^2$ are bounded for any $p\geq 0$, the proof is similar to that of \cite[Theorem 4.1]{DLL2021} and is omitted for brevity.

\begin{theorem}\label{semi-convergence}
Let $\psi$ be the unique solution to \eqref{Matrixequation}. For sufficiently large $n$, the approximate operator equation \eqref{semicollocation} has a unique solution $\psi^n$, which satisfies 
	\[
		\|\psi^n-\psi\|_p\leq
		C\|\mathcal{P}_n(\mathcal{E}^{-1}\mathcal{U})\psi-\mathcal{E}^{-1}\mathcal{U}\psi\|_{p+2},
	\]
	where $C$ is a positive constant depending on 
	$\mathcal{E}^{-1}\mathcal{J}$, $\mathcal{E}^{-1}\mathcal{V}$ and
	$\mathcal{E}^{-1}\mathcal{U}$.
\end{theorem}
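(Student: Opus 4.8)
The plan is to view \eqref{semicollocation} as a projection (collocation) discretization of the second-kind equation \eqref{Matrixequation2} and to exploit the extra smoothing of its compact part. Set $\mathcal{B}:=\mathcal{E}^{-1}\mathcal{U}$ and $\mathcal{C}:=\mathcal{E}^{-1}(\mathcal{J}+\mathcal{V})$, so that \eqref{Matrixequation2} reads $(\mathcal{B}+\mathcal{C})\psi=\mathcal{E}^{-1}\mathcal{A}\eta$, where $\mathcal{B}:H^p[0,2\pi]^2\to H^{p+2}[0,2\pi]^2$ is an isomorphism and $\mathcal{C}:H^p[0,2\pi]^2\to H^{p+3}[0,2\pi]^2$ is bounded, hence compact into $H^{p+2}[0,2\pi]^2$. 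The first step records the structural fact that, by \eqref{isomor}, $\mathcal{B}$ acts diagonally on the trigonometric basis and therefore maps $X_n^2$ into $X_n^2$; consequently $\mathcal{P}_n\mathcal{B}\psi^n=\mathcal{B}\psi^n$ for every $\psi^n\in X_n^2$, and \eqref{semicollocation} is equivalent to $(\mathcal{B}+\mathcal{P}_n\mathcal{C})\psi^n=\mathcal{P}_n\mathcal{E}^{-1}\mathcal{A}\eta$.

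The key analytic ingredient is norm convergence of the discretized compact part. Using the interpolation estimate \eqref{interpolationerror} with $q=p+2$ together with the one-order-higher regularity $\mathcal{C}\phi\in H^{p+3}[0,2\pi]^2$, I would bound
\[
\|(\mathcal{P}_n-I)\mathcal{C}\phi\|_{p+2}\le \frac{C}{n}\,\|\mathcal{C}\phi\|_{p+3}\le\frac{C}{n}\,\|\phi\|_{p},
\]
so that $\|(\mathcal{P}_n-I)\mathcal{C}\|_{H^p\to H^{p+2}}\to0$ and hence $\mathcal{B}^{-1}\mathcal{P}_n\mathcal{C}\to\mathcal{B}^{-1}\mathcal{C}$ in the operator norm on $H^p[0,2\pi]^2$. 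Since $\mathcal{B}+\mathcal{C}=\mathcal{E}^{-1}\mathcal{A}^2$ is an isomorphism by Theorem \ref{exiuni} (whose proof shows $\mathcal{A}^2=\mathcal{U}+\mathcal{J}+\mathcal{V}$ is bijective, the injectivity coming from that of $\mathcal{A}$ and the surjectivity from the Fredholm alternative), the operator $I+\mathcal{B}^{-1}\mathcal{C}=\mathcal{B}^{-1}(\mathcal{B}+\mathcal{C})$ is invertible on $H^p[0,2\pi]^2$. A Neumann-series perturbation then shows that $I+\mathcal{B}^{-1}\mathcal{P}_n\mathcal{C}$, equivalently $\mathcal{B}+\mathcal{P}_n\mathcal{C}$, is invertible for all sufficiently large $n$, with inverses bounded uniformly in $n$; this delivers the unique solvability of \eqref{semicollocation}.

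For the error estimate I would subtract the two equations. Applying $\mathcal{P}_n$ to $(\mathcal{B}+\mathcal{C})\psi=\mathcal{E}^{-1}\mathcal{A}\eta$ and subtracting the discrete equation, and using the cancellation $\mathcal{P}_n\mathcal{B}\psi^n=\mathcal{B}\psi^n$ once more, produces the identity
\[
(\mathcal{B}+\mathcal{P}_n\mathcal{C})(\psi^n-\psi)=(\mathcal{P}_n-I)\mathcal{B}\psi=\mathcal{P}_n(\mathcal{E}^{-1}\mathcal{U})\psi-\mathcal{E}^{-1}\mathcal{U}\psi.
\]
Writing $(\mathcal{B}+\mathcal{P}_n\mathcal{C})^{-1}=(I+\mathcal{B}^{-1}\mathcal{P}_n\mathcal{C})^{-1}\mathcal{B}^{-1}$ and invoking the uniform bound on the inverse together with the boundedness of $\mathcal{B}^{-1}:H^{p+2}[0,2\pi]^2\to H^{p}[0,2\pi]^2$ yields
\[
\|\psi^n-\psi\|_p\le C\,\|\mathcal{P}_n(\mathcal{E}^{-1}\mathcal{U})\psi-\mathcal{E}^{-1}\mathcal{U}\psi\|_{p+2},
\]
as claimed. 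I expect the main obstacle to be the second step: one must confirm that the compact part genuinely smooths by a full extra order, so that \eqref{interpolationerror} produces the decaying factor $n^{-1}$ and thus norm convergence rather than merely collectively compact convergence. This is exactly where the mapping property $\mathcal{J},\mathcal{V}:H^p[0,2\pi]^2\to H^{p+3}[0,2\pi]^2$ is essential; once it is in hand, the perturbation and subtraction arguments are routine, and this is why the proof can be modeled on that of \cite[Theorem 4.1]{DLL2021}.
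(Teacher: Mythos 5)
Your proposal is correct and takes essentially the same approach as the paper, which omits the proof by citing the boundedness of $\mathcal{J},\mathcal{V}\colon H^p[0,2\pi]^2\to H^{p+3}[0,2\pi]^2$ and the argument of \cite[Theorem 4.1]{DLL2021}: that argument is precisely your combination of the invariance $\mathcal{P}_n(\mathcal{E}^{-1}\mathcal{U})\psi^n=\mathcal{E}^{-1}\mathcal{U}\psi^n$ on $X_n^2$, the $O(1/n)$ operator-norm convergence of $\mathcal{P}_n\mathcal{E}^{-1}(\mathcal{J}+\mathcal{V})$ deduced from \eqref{interpolationerror} and the extra order of smoothing, a Neumann-series perturbation of the isomorphism $\mathcal{E}^{-1}(\mathcal{U}+\mathcal{J}+\mathcal{V})=\mathcal{E}^{-1}\mathcal{A}^2$, and the subtraction identity. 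The only point worth making explicit is that the unique $H^p$-solution of the perturbed equation actually lies in $X_n^2$, which follows from your structural fact since $\mathcal{E}^{-1}\mathcal{U}$ is diagonal on the trigonometric basis with nonzero eigenvalues $\xi_m^2$ and is therefore bijective on $X_n^2$.
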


\subsection{Full-discrete scheme}

Based on the Lagrange basis (cf. \cite[eqn. $(11.12)$]{Kress2014-book})
\[
\mathfrak{L}_j(t)=\frac{1}{2n}\Big\{1+2\sum_{k=1}^{n-1}\cos k(t-\zeta_j^{(n)})+\cos n(t-\zeta_j^{(n)})\Big\}, \quad j=0,1,\cdots,2n-1,
\]
an approximate solution
$\widetilde\psi^n\in X_n^2$ of \eqref{semicollocation} can be written as 
\[
\widetilde\psi^n(t)=\big(\widetilde\psi^n_1(t),
\widetilde\psi^n_2(t)\big)^\top=\bigg(\sum_{j=0}^{2n-1}
\widetilde\psi^n_1(\zeta_j^{(n)})\mathfrak{L}_j(t) ,
\sum_{j=0}^{2n-1}\widetilde\psi^n_2(\zeta_j^{(n)})
\mathfrak{L}_j(t)\bigg)^\top. 
\] 
Moreover, it satisfies
\begin{align}\label{fullcollocation}
	\mathcal{E}_n^{-1}\mathcal{U}_n\widetilde\psi^n+\mathcal{P}_n[\mathcal{E}_n^{-1}(\mathcal{J}_n+\mathcal{V}_n)]\widetilde\psi^n=\mathcal{P}_n(\mathcal{E}_n^{-1}\mathcal{A}_n)\eta, 
\end{align}
where $\mathcal{A}_n=\mathcal{N}_n+\mathcal{D}_n=\mathcal{N}_{1,n}+\mathcal{N}_{2,n}+\mathcal{D}_{n}$, $\mathcal{J}_n=\mathcal{J}_{1,n}+\mathcal{J}_{2,n}$, $\mathcal{V}_n=\mathcal{N}_n\mathcal{D}_n+\mathcal{D}_n\mathcal{N}_n+\mathcal{D}_n\mathcal{D}_n$. The involved quadrature operators are given by
\begin{align}\label{TSE}
	T_{0,n}=T_0P_n, \quad S_{j,n}=S_jP_n, \quad E_{i,n}=E_i
\end{align}
for $j=0,1,2$, $i=1,\cdots,9$, and the quadrature operators corresponding the integral operators of the form
\begin{align*}
	(B\phi)(t)&=
	\int_0^{2\pi}\ln\Big(4\sin^2\frac{t-\zeta}{2}\Big)b_1(t,\zeta)\phi(\zeta)\,\mathrm{d}\zeta+\int_0^{2\pi}b_2(t,\zeta)\phi(\zeta)\,\mathrm{d}\zeta\\
	&\overset{\text{def}}{=}(B_{1}\phi)(t)+(B_{2}\phi)(t)
\end{align*}
are defined by
\begin{align*}
	(B_n\phi)(t)&=
	\int_0^{2\pi}\ln\Big(4\sin^2\frac{t-\zeta}{2}\Big)P_n[b_1(t,\zeta)\phi(\zeta)]\,\mathrm{d}\zeta+\int_0^{2\pi}P_n[b_2(t,\zeta)\phi(\zeta)]\,\mathrm{d}\zeta\\
	&\overset{\text{def}}{=}(B_{1,n}\phi)(t)+(B_{2,n}\phi)(t).
\end{align*}
Then we have $\mathcal{N}_{1,n}=\mathcal{N}_{1}\mathcal{P}_n$, $\mathcal{N}_{2,n}=\mathcal{N}_{2}\mathcal{P}_n$, and
\begin{align*}
	\mathcal{E}_n&=\left[             \begin{array}{cc}
		E_{1,n} & 0 \\ 
		0        & E_{1,n}
	\end{array}
	\right],\quad
	\mathcal{J}_{1,n}=\left[                  
	\begin{array}{cc}
		F_{1,n}   & F_{2,n} \\ 
		F_{3,n}   & F_{4,n}
	\end{array}
	\right], \quad
	\mathcal{J}_{2,n}=\left[                  
	\begin{array}{cc}
		J_{2,n}  & 0 \\ 
		J_{3,n}   & J_{1,n}
	\end{array}
	\right],
\end{align*}
where $F_{j,n}$ and $J_{i,n}$ are quadrature operators corresponding to the integral operators $F_j$ and $J_i$, respectively. It is clear to note that $\mathcal{E}_n^{-1}\mathcal{U}_n\widetilde\psi^n=\mathcal{E}^{-1}\mathcal{U}\widetilde\psi^n$ and $\mathcal{N}_n\widetilde\psi^n=\mathcal{N}\widetilde\psi^n$ for any $\widetilde\psi^n\in X_n^2$. 

Our goal is to examine the convergence of the fully discretized equation \eqref{fullcollocation}. The function $\mathcal{D}\psi$ can be split into the following two parts:
\[
	(\mathcal{D}\psi)(t)=\int_0^{2\pi}\ln\Big(4\sin^2\frac{t-\zeta}{2}
	\Big)\alpha(t,\zeta)\psi(\zeta)\,\mathrm{d}\zeta+\int_0^{2\pi}\beta(t,
	\zeta)\psi(\zeta)\,\mathrm{d}\zeta,
\]
where 
\[
\alpha(t,\zeta)=\left[\begin{array}{cc}
	\alpha_1(t,\zeta)& \alpha_2(t,\zeta)\\ 
	\alpha_3(t,\zeta)& \alpha_4(t,\zeta)
\end{array}
\right], \quad \beta(t,\zeta)=\left[\begin{array}{cc}
	\beta_1(t,\zeta)& \beta_2(t,\zeta)\\ 
	\beta_3(t,\zeta)& \beta_4(t,\zeta)
\end{array}
\right]
\]
with $\alpha_j(t,t)=\partial_t \alpha_j(t,t)=\partial^2_{tt} \alpha_j(t,t)=0, j=1,2,3,4$ and $\alpha_j, \beta_j$ being analytic.

\begin{theorem}\label{Bn-estimate}
	Let $0\leq q\leq p$ and $p>1/2$. Then for any $\psi\in X_n^2$ and $\chi\in H^p[0,2\pi]^2$, it holds that 
	\begin{align}
		\label{Bestimate}
		\|\mathcal{D}_n\psi-\mathcal{D}\psi\|_{q+3}\leq C\frac{1}{n^{p+1-q}}\|\psi\|_{p},\quad \|\mathcal{D}_n\chi-\mathcal{D}\chi\|_{q+3}\leq \widetilde{C}\frac{1}{n^{p-q}}\|\chi\|_{p},
	\end{align}
	where $C$ and $\widetilde{C}$ are positive constants depending on $p$ and $q$.
\end{theorem}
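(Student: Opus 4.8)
The plan is to use the matrix structure to reduce the claim to two scalar prototype quadrature errors, to dispatch the smooth one directly, and to reduce the singular one to the model operators $S_0,S_1,S_2$, where the superconvergence on trigonometric polynomials originates. Since the norm on $H^p[0,2\pi]^2$ is the sum of the component norms and $\mathcal D_n-\mathcal D$ acts entrywise, it suffices to bound each scalar entry in $H^{q+3}[0,2\pi]$. Inserting the definition of $\mathcal D$ together with the full-integrand interpolation defining $\mathcal D_n$, every entry of $(\mathcal D_n-\mathcal D)\phi$ is a finite sum of the two prototype forms
\begin{align*}
(\Lambda\phi)(t)&=\int_0^{2\pi}\ln\Big(4\sin^2\tfrac{t-\zeta}{2}\Big)(P_n-I)\big[a(t,\zeta)\phi(\zeta)\big]\,\mathrm d\zeta,\\
(\Sigma\phi)(t)&=\int_0^{2\pi}(P_n-I)\big[b(t,\zeta)\phi(\zeta)\big]\,\mathrm d\zeta,
\end{align*}
where $a,b$ are analytic, $a$ coincides with one of the entries $\alpha_j$ and hence satisfies $a(t,t)=\partial_t a(t,t)=\partial^2_{tt}a(t,t)=0$, and the $M_0$-terms contribute only smooth kernels that fall under $\Sigma$.

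For the smooth term I would differentiate under the integral sign. Since $P_n$ acts in $\zeta$ and commutes with $\partial_t$, one has $\partial_t^k(\Sigma\phi)(t)=\int_0^{2\pi}(P_n-I)[\partial_t^k b(t,\zeta)\phi(\zeta)]\,\mathrm d\zeta$, which is the interpolatory (trapezoidal) quadrature error of the $H^p$-function $\zeta\mapsto\partial_t^k b(t,\zeta)\phi(\zeta)$. As $b$ is analytic, multiplication by $\partial_t^k b(t,\cdot)$ is bounded on $H^p$ uniformly in $t$ for every $k$, so \eqref{interpolationerror} yields $\|\Sigma\chi\|_{q+3}\le Cn^{q-p}\|\chi\|_p$ for a general $\chi\in H^p$. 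When $\phi=\psi\in X_n$, the aliasing formula shows that the quadrature error is driven by the $\zeta$-Fourier coefficients of the product at frequencies that are multiples of $2n$; because $\psi$ has degree at most $n$, these force the analytic factor to be sampled at indices $\ge n$, whose exponential decay makes $\Sigma\psi$ exponentially small, well within the required $n^{q-p-1}$.

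For the singular term I would separate the variables of $a(t,\zeta)$ by a Taylor expansion about the diagonal $\zeta=t$, exactly as in the manipulations behind Theorem \ref{isomorphism}. The third-order vanishing of $a$ lets us express $\Lambda$, modulo smoother remainders, through the model operators $S_0,S_1,S_2$ with analytic multipliers $E(t)$ pulled out of the integral; since $S_2\colon H^p\to H^{p+3}$ supplies the three orders of smoothing needed to reach $H^{q+3}$ and the multipliers are bounded on every $H^s$, composing this smoothing with \eqref{interpolationerror} applied to $(P_n-I)[a(t,\cdot)\chi]$ gives $\|\Lambda\chi\|_{q+3}\le Cn^{q-p}\|\chi\|_p$ for general $\chi$. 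Summing the finitely many entries yields the second inequality in \eqref{Bestimate}.

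The main obstacle is the extra power of $n$ in the first inequality, i.e. the quadrature superconvergence on $X_n$, which for $\Lambda$ is genuine (only one order, not exponential) because the logarithmic kernel is not analytic. The key point is that the model operators $S_0,S_1,S_2$ are discretized as $S_jP_n$ (see \eqref{TSE}), so their quadrature error $S_j(P_n-I)\psi$ vanishes identically on $X_n$; for a polynomial density the residual therefore comes only from the aliasing of the few Fourier modes of the separated, low-degree integrands that exceed degree $n$, each of which is damped by the smoothing symbol of the $S_2$-type part. Quantifying this damping by the aliasing analysis of \cite{Kress1995, DLL2021} upgrades $n^{q-p}$ to $n^{q-p-1}=n^{-(p+1-q)}$, and summation over the entries completes the proof.
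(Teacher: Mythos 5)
Your reduction to the two scalar prototypes and your treatment of the smooth prototype $\Sigma$ are fine, but the logarithmic prototype $\Lambda$ --- which carries the whole content of the theorem --- has a genuine gap, and it sits exactly where you located the ``main obstacle.'' The claim that the model operators are discretized as $S_jP_n$ by \eqref{TSE}, ``so their quadrature error $S_j(P_n-I)\psi$ vanishes identically on $X_n$,'' conflates two different discretizations. The operator $\mathcal{D}_n$ is defined by interpolating the \emph{full integrand}: each entry reads $\int_0^{2\pi}\ln\big(4\sin^2\frac{t-\zeta}{2}\big)P_n[\alpha_j(t,\cdot)\psi]\,\mathrm{d}\zeta+\int_0^{2\pi}P_n[\beta_j(t,\cdot)\psi]\,\mathrm{d}\zeta$. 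After your Taylor separation $\alpha_j(t,\zeta)\approx\sum_k E_k(t)\sigma_k(t-\zeta)$, the corresponding pieces of $\mathcal{D}_n$ are $E_k(t)\int_0^{2\pi}\ln\big(4\sin^2\frac{t-\zeta}{2}\big)P_n[\sigma_k(t-\cdot)\psi](\zeta)\,\mathrm{d}\zeta$, \emph{not} $E_k(t)(S_kP_n\psi)(t)$; since $\sigma_k(t-\cdot)\psi$ has degree exceeding $n$, the interpolation is not exact there, and this residual is precisely the quantity the theorem asks you to estimate --- in your proposal it is asserted by pointing to the ``aliasing analysis of \cite{Kress1995,DLL2021},'' not proved. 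Worse, the mechanism you invoke to control it cannot work: once the error $(P_n-I)[\sigma_k(t-\cdot)\psi]$ has been formed, it is integrated against the \emph{bare} logarithmic kernel, whose Fourier symbol decays only like $1/|m|$; the extra smoothing of $S_2$ (symbol decay $|m|^{-3}$) is a property of the exact composite operator and is not inherited by the quadrature error. The residual consists of modes of frequency about $n$ damped by a single factor $1/n$, so this bookkeeping cannot reach the norm $\|\cdot\|_{q+3}$ on the left of \eqref{Bestimate}. The same objection undermines your derivation of the second inequality: since $(P_n-I)[a(t,\cdot)\chi]\neq a(t,\cdot)(P_n-I)\chi$, the interpolation error cannot be pushed inside the smoothing operator and then estimated by \eqref{interpolationerror}.

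The missing idea is the paper's differentiation device, which is what reconciles the one order of logarithmic damping available in the quadrature error with the three extra orders of smoothness claimed on the left. The paper writes $\mathcal{D}''\psi=\frac{\mathrm{d}^2}{\mathrm{d}t^2}(\mathcal{D}\psi)$ as an operator of the same type, with logarithmic kernel $\widetilde\alpha=\partial^2_{tt}\alpha$ (still vanishing on the diagonal) and smooth kernel $\widetilde\beta=2\cot\frac{t-\zeta}{2}\,\partial_t\alpha-\alpha/\big(2\sin^2\frac{t-\zeta}{2}\big)+\partial^2_{tt}\beta$, which is analytic precisely because of the third-order diagonal vanishing of $\alpha$ --- that is the real role of this hypothesis, rather than producing $S_1,S_2$ terms. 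It then applies the quadrature-error results of \cite[Lemma 13.21 and Theorem 12.18]{Kress2014-book} to $\mathcal{D}''$ and $\mathcal{D}''_n$, obtaining the rates $n^{-(p+1-q)}$ (for $\psi\in X_n^2$) and $n^{-(p-q)}$ (for general $\chi$) at the level of $\|\cdot\|_{q+1}$, and finally converts these into the $\|\cdot\|_{q+3}$ bounds of \eqref{Bestimate} through the identity $\mathcal{D}''_n\psi=\frac{\mathrm{d}^2}{\mathrm{d}t^2}(\mathcal{D}_n\psi)$ and \cite[Theorem 8.13]{Kress2014-book}. In short, the extra powers on the left are bought by differentiating twice \emph{before} invoking the quadrature lemmas, not by any smoothing of the quadrature error itself; without this device (or an equivalent substitute) your argument does not close.
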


\begin{proof}
	We write the derivative $\frac{\rm d^2}{{\rm d}t^2}(\mathcal{D}\psi)$ in form of
	\begin{align*}
		(\mathcal{D}''\psi)(t):=\frac{\rm d^2}{{\rm
				d}t^2}(\mathcal{D}\psi)(t)=\int_0^{2\pi}\ln\Big(4\sin^2\frac{t-\zeta}{2}
		\Big)\widetilde \alpha(t,\zeta)
		\psi(\zeta)\,\mathrm{d}\zeta+\int_0^{2\pi}\widetilde \beta(t,\zeta)
		\psi(\zeta)\,\mathrm{d}\zeta,
	\end{align*}
	where
	\begin{align*}
		\widetilde \alpha(t,\zeta)=\partial^2_{tt}\alpha(t,\zeta),\quad \widetilde \beta(t,\zeta) =2\cot\frac{t-\zeta}{2}\partial_t\alpha(t,\zeta)-\alpha(t,\zeta)\frac{1}{2\sin^2\frac{t-\zeta}{2}}+\partial^2_{tt}\beta(t,\zeta).
	\end{align*}
	By the interpolatory quadrature, the full discretization of $\mathcal{D}''$ can be written as 
	\begin{align*}
		(\mathcal{D}''_n\psi)(t)=\int_0^{2\pi}\ln\Big(4\sin^2\frac{t-\zeta}{2}\Big)\mathcal{P}_n\Big\{\widetilde \alpha(t,\cdot) \psi\Big\}(\zeta)\,\mathrm{d}\zeta+\int_0^{2\pi}\mathcal{P}_n\Big\{\widetilde \beta(t,\cdot) \psi\Big\}(\zeta)\,\mathrm{d}\zeta.
	\end{align*}

	Noting $p>1/2, 0\leq q\leq p, \widetilde \alpha(t,t)=0$, and the analyticity of the elements in $\widetilde \alpha(t,\zeta)$ and $\widetilde \beta(t,\zeta)$, we have from \cite[Lemma 13.21 and Theorem 12.18]{Kress2014-book} that 
	$$
	\|\mathcal{D}''_n\psi-\mathcal{D}''\psi\|_{q+1}\leq C_1\frac{1}{n^{p+1-q}}\|\psi\|_{p},\quad \|\mathcal{D}''_n\chi-\mathcal{D}''\chi\|_{q+1}\leq \widetilde{C}_1\frac{1}{n^{p-q}}\|\chi\|_{p}
	$$
	for any $\psi\in X_n^2$ and $\chi\in H^p[0,2\pi]^2$, where the positive constants $C_1$ and $\widetilde{C}_1$ depend on $p$ and $q$. Since $\mathcal{D}''_n\psi=\frac{\rm d^2}{{\rm d}t^2}(\mathcal{D}_n\psi)$, the above inequalities reduce to  
	\begin{align*}
		\|\mathcal{D}_n\psi-\mathcal{D}\psi\|_{q+3}\leq C_2\frac{1}{n^{p+1-q}}\|\psi\|_{p}, \quad \|\mathcal{D}_n\chi-\mathcal{D}\chi\|_{q+3}\leq \widetilde{C}_2\frac{1}{n^{p-q}}\|\chi\|_{p},
	\end{align*}
	where $C_2$ and $\widetilde{C}_2$ are positive constants depending on $p$ and $q$.
	By \cite[Theorem 8.13]{Kress2014-book}, the proof is completed since the above inequalities hold for any $q$ satisfying $0\leq q\leq p$ and $p>1/2$.
\end{proof}

\begin{remark}\label{estimate}
	For any $\varphi\in X_n$ and $\tilde{\chi}\in H^p[0,2\pi]$, we may follow the proofs of \cite[Theorem 4.2]{DLL2021} and \cite[Lemma 13.21]{Kress2014-book} to show that $S_2, B_2$ satisfy the following type of estimates:
	\[
		\|S_{2,n}\varphi-S_{2}\varphi\|_{q+2}
		\leq C_1\frac{1}{n^{p+1-q}}\|\varphi\|_{p},\quad  \|S_{2,n}\tilde{\chi}-S_{2}\tilde{\chi}\|_{q+2}\leq \widetilde{C}_1\frac{1}{n^{p-q}}\|\tilde{\chi}\|_{p}, 
	\]
	and $S_1, B_1, B_3, \widetilde{B}_3, \widetilde{B}_4$ satisfy the following type of estimates: 
	\[
		\|S_{1,n}\varphi-S_{1}\varphi\|_{q+1}
		\leq C_2\frac{1}{n^{p+1-q}}\|\varphi\|_{p},\quad \|S_{1,n}\tilde{\chi}-S_{1}\tilde{\chi}\|_{q+1}\leq \widetilde{C}_2\frac{1}{n^{p-q}}\|\tilde{\chi}\|_{p},
	\]
	where $0\leq q\leq p$, $p>\frac{1}{2}$, and the positive constants $C_j, \widetilde{C}_j, j=1, 2$ depend on $p$ and $q$.
\end{remark}

Hereafter, the notation $a\preceq b$ stands for $a\leq Cb$, where $C > 0$ is a constant depending on $p$ and may change step by step in the proofs.

\begin{theorem}\label{Kn-estimate}
	Let $p>\frac{3}{2}$. Then the following estimate holds: 
	$$
	\|\mathcal{P}_n[\mathcal{E}_n^{-1}\mathcal{V}_n-\mathcal{E}^{-1}\mathcal{V}]\psi\|_{p+2}\preceq \frac{1}{n}\|\psi\|_{p}\quad\forall\,\psi\in X_n^2. 
	$$
\end{theorem}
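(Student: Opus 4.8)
The plan is to simplify the operator first and then estimate three bilinear differences separately. Since the quadrature leaves the multiplication operators exact, $E_{1,n}=E_1$ by \eqref{TSE}, whence $\mathcal{E}_n=\mathcal{E}$ and $\mathcal{E}_n^{-1}\mathcal{V}_n-\mathcal{E}^{-1}\mathcal{V}=\mathcal{E}^{-1}(\mathcal{V}_n-\mathcal{V})$. Because $E_1^{-1}$ is an isomorphism on every $H^s[0,2\pi]$ and $\mathcal{P}_n$ is uniformly bounded on $H^{p+2}[0,2\pi]^2$ for $p>3/2$ (so that $p+2>1/2$), it suffices to prove $\|(\mathcal{V}_n-\mathcal{V})\psi\|_{p+2}\preceq n^{-1}\|\psi\|_p$ for $\psi\in X_n^2$. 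Recalling $\mathcal{V}=\mathcal{N}\mathcal{D}+\mathcal{D}\mathcal{N}+\mathcal{D}^2$ and the analogous expression for $\mathcal{V}_n$, I would write
\[
\mathcal{V}_n-\mathcal{V}=(\mathcal{N}_n\mathcal{D}_n-\mathcal{N}\mathcal{D})+(\mathcal{D}_n\mathcal{N}_n-\mathcal{D}\mathcal{N})+(\mathcal{D}_n\mathcal{D}_n-\mathcal{D}\mathcal{D})
\]
and treat each term by the telescoping identity $XY-X'Y'=X(Y-Y')+(X-X')Y'$, using the two bounds of Theorem~\ref{Bn-estimate}: the sharp rate $n^{-(p+1-q)}$ valid on $X_n^2$ and the weaker rate $n^{-(p-q)}$ valid on all of $H^p$. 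I would also record that $\mathcal{D}_n:H^s[0,2\pi]^2\to H^{s+3}[0,2\pi]^2$ is uniformly bounded for $s>1/2$, obtained by taking $q=s$ in the weak estimate and combining with $\mathcal{D}:H^s\to H^{s+4}$.

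The two outer, smoothing terms are routine. Using $\mathcal{N}_n=\mathcal{N}\mathcal{P}_n$, I would split $\mathcal{N}_n\mathcal{D}_n-\mathcal{N}\mathcal{D}=\mathcal{N}\mathcal{P}_n(\mathcal{D}_n-\mathcal{D})+\mathcal{N}(\mathcal{P}_n-I)\mathcal{D}$; the first piece uses the sharp bound with $q=p$ to get $\|(\mathcal{D}_n-\mathcal{D})\psi\|_{p+3}\preceq n^{-1}\|\psi\|_p$ followed by the uniform boundedness of $\mathcal{N}\mathcal{P}_n:H^{p+3}\to H^{p+2}$, while the second uses $\mathcal{D}\psi\in H^{p+4}$ together with the interpolation error \eqref{interpolationerror} to gain one power of $n$, followed by $\mathcal{N}:H^{p+3}\to H^{p+2}$. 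The term $\mathcal{D}_n\mathcal{D}_n-\mathcal{D}\mathcal{D}=\mathcal{D}_n(\mathcal{D}_n-\mathcal{D})+(\mathcal{D}_n-\mathcal{D})\mathcal{D}$ is handled identically: for the first piece the sharp bound (with $q=p$) is followed by bounded $\mathcal{D}_n:H^{p+3}\to H^{p+6}$ and a Sobolev embedding $H^{p+6}\hookrightarrow H^{p+2}$, and for the second the weak bound is applied to $\mathcal{D}\psi\in H^{p+4}$; both pieces in fact decay faster than $n^{-1}$.

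The crux is the middle term $\mathcal{D}_n\mathcal{N}_n-\mathcal{D}\mathcal{N}$. Since $\mathcal{P}_n$ reproduces $X_n^2$, one has $\mathcal{N}_n\psi=\mathcal{N}\mathcal{P}_n\psi=\mathcal{N}\psi$, so the term reduces to $(\mathcal{D}_n-\mathcal{D})\mathcal{N}\psi$. The naive route fails: $\mathcal{N}\psi$ lies only in $H^{p-1}$, and the weak estimate at $q=p-1$ (the choice that lands in $H^{p+2}$) yields merely $O(1)$. The resolution, which is the key idea, is to split $\mathcal{N}=\mathcal{N}_1+\mathcal{N}_2$ as in the definition preceding Theorem~\ref{isomorphism}. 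The principal part $\mathcal{N}_1$, whose entries are $I,0,T_0,-I$, maps $X_n^2$ into itself because $T_0$ acts as a Fourier multiplier and hence preserves the degree of a trigonometric polynomial; thus the sharp estimate applies to $(\mathcal{D}_n-\mathcal{D})\mathcal{N}_1\psi$ at regularity index $p-1$, with $\|\mathcal{N}_1\psi\|_{p-1}\preceq\|\psi\|_p$, producing rate $n^{-1}$ precisely when $p-1>1/2$. The remainder $\mathcal{N}_2$ is smoothing, $\mathcal{N}_2:H^p\to H^{p+1}$, so the weak estimate applied to $\mathcal{N}_2\psi\in H^{p+1}$ with $q=p-1$ gives rate $n^{-2}$. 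Adding these bounds the middle term by $n^{-1}\|\psi\|_p$, and combining the three estimates with the initial reduction completes the proof. I expect the delicate point to be verifying that $\mathcal{N}_1$ preserves $X_n^2$ and tracking the Sobolev indices so that the hypothesis $p>3/2$ emerges exactly from invoking the sharp estimate at regularity $p-1$.
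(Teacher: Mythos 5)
Your proposal is correct and follows essentially the same route as the paper's proof: the same reduction via $\mathcal{E}_n=\mathcal{E}$, the same three-term splitting of $\mathcal{V}_n-\mathcal{V}$ handled by the sharp/weak bounds of Theorem \ref{Bn-estimate}, and the same key resolution of the middle term through $\mathcal{N}_n\psi=\mathcal{N}\psi$ on $X_n^2$ together with the splitting $\mathcal{N}=\mathcal{N}_1+\mathcal{N}_2$, where $\mathcal{N}_1\psi\in X_n^2$ and $\mathcal{N}_2$ is smoothing, which is exactly where $p>3/2$ enters. The only cosmetic difference is that you apply the sharp estimate directly to $\mathcal{N}_1\psi$ and the weak estimate to $\mathcal{N}_2\psi$, whereas the paper inserts $\mathcal{P}_n\mathcal{N}\psi$ and lets the interpolation error of $\mathcal{N}_1\psi$ vanish; the two bookkeepings are equivalent.
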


\begin{proof}
	The proof is motivated by \cite[Theorem 4.3]{DLL2021}. Recalling the boundedness of $\mathcal{N}: H^{p}[0,2\pi]^2\rightarrow H^{p-1}[0,2\pi]^2$ and using \eqref{interpolationerror}, we have for any $\chi\in H^p[0,2\pi]^2, 0\leq q\leq p, p>1/2$ that 
	\begin{align*}
		\|(\mathcal{N}_n-\mathcal{N})\chi\|_{q-1}=\|\mathcal{N}(\mathcal{P}_n\chi-\chi)\|_{q-1}\leq C_1\|(\mathcal{P}_n\chi-\chi)\|_{q}\leq\frac{C_2}{n^{p-q}}\|\chi\|_{p}, 
	\end{align*} 
	where $C_1$ and $C_2$ are positive constants depending on $q$ and $p, q$, respectively. For any $p>1/2$, it is clear to note that $\mathcal{N}_n$ and $\mathcal{N}_n-\mathcal{N}$ are uniformly bounded from $H^p[0,2\pi]^2$ to $H^{p-1}[0,2\pi]^2$.
	
	For any $\psi\in X_n^2$, it follows from Theorem \ref{Bn-estimate} and $\mathcal{N}_n\psi=\mathcal{N}\psi$ that 
	\begin{align*}
		&\|\mathcal{N}_n\mathcal{D}_n\psi-\mathcal{N}\mathcal{D}\psi\|_{p+2}\\
		&\leq
		\|\mathcal{N}_n(\mathcal{D}_n-\mathcal{D})\psi\|_{p+2}+\|(\mathcal{N}
		_n-\mathcal{N})(\mathcal{D}\psi-\mathcal{P}_n\mathcal{D}\psi)\|_{p+2}
		+\|(\mathcal{N}_n-\mathcal{N})\mathcal{P}_n\mathcal{D}\psi\|_{p+2}\\
		&\preceq \|(\mathcal{D}_n-\mathcal{D})\psi\|_{p+3}+\|\mathcal{D}
		\psi-\mathcal{P} _n\mathcal{D}\psi\|_{p+3}\\
		&\preceq 1/n\|\psi\|_{p}+1/n\|\mathcal{D}\psi\|_{p+4}
		\preceq1/n\|\psi\|_{p}.
	\end{align*}
	Moreover, by Theorem \ref{Bn-estimate}, we have for any $p>1/2$ that 
	$\mathcal{D}_n$ and $\mathcal{D}_n-\mathcal{D}$ are uniformly bounded from
	$H^p[0,2\pi]^2$ to $H^{p+3}[0,2\pi]^2$. Combining 
	\eqref{interpolationerror} and \eqref{Bestimate}, and noting the boundedness of $\mathcal{D}: H^p[0,2\pi]^2\rightarrow H^{p+4}[0,2\pi]^2$ and the uniform boundedness of
	$\mathcal{P}_n: H^{p+2}[0,2\pi]^2\rightarrow H^{p+2}[0,2\pi]^2$, we obtain 
	\begin{align*}
		&\|\mathcal{D}^2_n\psi-\mathcal{D}^2\psi\|_{p+2}
		\leq \|\mathcal{D}^2_n\psi-\mathcal{D}^2\psi\|_{p+6}\\
		&\leq \|\mathcal{D}_n(\mathcal{D}_n-\mathcal{D})\psi\|_{p+6}+\|(\mathcal{D}_n-\mathcal{D})(\mathcal{D}\psi-\mathcal{P}_n\mathcal{D}\psi)\|_{p+6}+\|(\mathcal{D}_n-\mathcal{D})\mathcal{P}_n\mathcal{D}\psi\|_{p+6}\\
		&\preceq   \|(\mathcal{D}_n-\mathcal{D})\psi\|_{p+3}+\|(\mathcal{D}\psi-\mathcal{P}_n\mathcal{D}\psi)\|_{p+3}+1/n\|\mathcal{P}_n\mathcal{D}\psi\|_{p+3}\\
		&\preceq   1/n\|\psi\|_{p}+1/n\|\mathcal{D}\psi\|_{p+4}+1/n\|\mathcal{D}\psi\|_{p+3}
		\preceq  1/n\|\psi\|_{p}. 
	\end{align*}
	With the help of $\mathcal{N}_1\psi\in X_n^2$, the boundedness of $\mathcal{N}_2: H^{p}[0,2\pi]^2\rightarrow H^{p+1}[0,2\pi]^2$ and the uniform boundedness of $\mathcal{D}_n-\mathcal{D}: H^{p-1}[0,2\pi]^2$ to $H^{p+2}[0,2\pi]^2$ and $\mathcal{P}_n: H^{p-1}[0,2\pi]^2\rightarrow H^{p-1}[0,2\pi]^2$ for $p>\frac{3}{2}$, we deduce 
	\begin{align*}
		&\|\mathcal{D}_n\mathcal{N}_n\psi-\mathcal{D}\mathcal{N}\psi\|_{p+2}=\|(\mathcal{D}_n-\mathcal{D})\mathcal{N}\psi\|_{p+2}\\
		&\leq  \|(\mathcal{D}_n-\mathcal{D})(\mathcal{N}\psi-\mathcal{P}_n\mathcal{N}\psi)\|_{p+2}+\|(\mathcal{D}_n-\mathcal{D})\mathcal{P}_n\mathcal{N}\psi\|_{p+2}\\
		&\preceq \|\mathcal{N}\psi-\mathcal{P}_n\mathcal{N}\psi\|_{p-1}+1/n\|\mathcal{P}_n\mathcal{N}\psi\|_{p-1}\\
		&\leq  \|\mathcal{N}_1\psi-\mathcal{P}_n\mathcal{N}_1\psi\|_{p-1}+\|\mathcal{N}_2\psi-\mathcal{P}_n\mathcal{N}_2\psi\|_{p-1}+1/n\|\mathcal{P}_n\mathcal{N}\psi\|_{p-1}\\
		&\preceq 
		1/n^2\|\mathcal{N}_2\psi\|_{p+1}+1/n\|\mathcal{N}\psi\|_{p-1}
		\preceq 1/n\|\psi\|_{p}.
	\end{align*}
	Combining the above estimates yields 
	\begin{align*}
		\|\mathcal{V}_n\psi-\mathcal{V}\psi\|_{p+2}
		&\leq 
		\|\mathcal{N}_n\mathcal{D}_n\psi-\mathcal{N}\mathcal{D}\psi\|_{p+2}+\|\mathcal{D}_n\mathcal{N}_n\psi-\mathcal{D}\mathcal{N}\psi\|_{p+2}\\
		&\quad+\|\mathcal{D}_n^2\psi-\mathcal{D}^2\psi\|_{p+2} \\
		&\preceq 1/n\|\psi\|_{p},
	\end{align*}
	which completes the proof by noting that the operators $\mathcal{E}^{-1},\mathcal{P}_n: H^{p+2}[0,2\pi]^2\rightarrow
	H^{p+2}[0,2\pi]^2$ are uniformly bounded.  
\end{proof}

\begin{theorem}\label{Jn-estimate}
	Let $p>1/2$. Then the following estimate holds: 
	$$
	\|\mathcal{P}_n[\mathcal{E}_n^{-1}\mathcal{J}_n-\mathcal{E}^{-1}\mathcal{J}]\psi\|_{p+2}
	\preceq  \frac{1}{n}\|\psi\|_{p}\quad\forall\,\psi\in X_n^2. 
	$$
\end{theorem}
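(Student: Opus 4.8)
The plan is to mirror the proofs of Theorem \ref{Kn-estimate} and \cite[Theorem 4.3]{DLL2021}, reducing the matrix estimate to scalar estimates for the building blocks of $\mathcal{J}$. First, since $E_{1,n}=E_1$ by \eqref{TSE}, we have $\mathcal{E}_n=\mathcal{E}$ and hence $\mathcal{E}_n^{-1}\mathcal{J}_n-\mathcal{E}^{-1}\mathcal{J}=\mathcal{E}^{-1}(\mathcal{J}_n-\mathcal{J})$. As $\mathcal{E}^{-1}$ is bounded on $H^{p+2}[0,2\pi]^2$ and $\mathcal{P}_n:H^{p+2}[0,2\pi]^2\rightarrow H^{p+2}[0,2\pi]^2$ is uniformly bounded for $p>1/2$, it suffices to prove $\|(\mathcal{J}_n-\mathcal{J})\psi\|_{p+2}\preceq n^{-1}\|\psi\|_p$ for $\psi\in X_n^2$. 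By \eqref{Jdef} this splits into seven scalar estimates for $F_{j,n}-F_j$ ($j=1,\dots,4$) and $J_{i,n}-J_i$ ($i=1,2,3$), whose constituents are recorded in \eqref{F1234}, \eqref{J1}, and the proof of Theorem \ref{isomorphism}.

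Each $F_j,J_i$ is a finite sum of compositions of the elementary operators $S_0,\widetilde{S}_0,S_1,S_2,M_0,B_1,B_2,B_3,\widetilde{B}_3,\widetilde{B}_4$ conjugated by the bounded multipliers $E_i$ (with $E_1$ isomorphic). For a two-factor block $AB$ with discretizations $A_n,B_n$ I would use the telescoping identity
\begin{align*}
A_nB_n\psi-AB\psi=A_n(B_n-B)\psi+(A_n-A)(I-\mathcal{P}_n)B\psi+(A_n-A)\mathcal{P}_nB\psi,
\end{align*}
and estimate the three terms separately, iterating it for the three-factor blocks in $J_1,J_3$ and the quadratic block $E_2S_2E_2S_2$ in $F_1,F_4$. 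On the first and third terms the innermost operator-difference acts on a trigonometric polynomial ($\psi$, respectively $\mathcal{P}_nB\psi$), so the sharp polynomial rate $n^{-(p+1-q)}$ furnished by Remark \ref{estimate} and \eqref{interpolationerror} applies. On the middle term the smoothing of $B$ places $B\psi$ in a higher Sobolev space, so \eqref{interpolationerror} renders $(I-\mathcal{P}_n)B\psi$ small in a lower norm, after which the residual smoothing of $A$ and the uniform boundedness of $A_n$ return the output to $H^{p+2}[0,2\pi]$.

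The decisive point is the regularity count. Only the innermost factor sees a genuine polynomial and receives $n^{-(p+1-q)}$; every outer factor acts on non-polynomial data and receives only $n^{-(p-q)}$. However, each block smooths by at least one order ($S_0,\widetilde{S}_0$ by one; $S_1,B_1,B_3,\widetilde{B}_3,\widetilde{B}_4$ by two; $S_2,B_2$ by three; $M_0$ by any order), and every composition appearing in $\mathcal{J}$ smooths by at least three. This surplus lets me evaluate each intermediate error at a reduced index $q<p$, converting one unit of smoothing into one power of $n$, so that every telescoped term lands in $H^{p+2}[0,2\pi]$ with a full factor $n^{-1}$. For instance, for the block $\widetilde{S}_0E_3B_1$ in $J_1$ the split reads $\widetilde{S}_{0,n}E_3(B_{1,n}-B_1)\psi+\widetilde{S}_0(P_n-I)E_3B_1\psi$: the first factor-difference gives $n^{-1}$ in $H^{p+1}$ by Remark \ref{estimate}, lifted to $H^{p+2}$ by $\widetilde{S}_{0,n}$, while the second gains $n^{-1}$ from \eqref{interpolationerror} applied to $E_3B_1\psi\in H^{p+2}$ together with the one-order smoothing of $\widetilde{S}_0$.

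The main obstacle is precisely this bookkeeping across the multi-factor compositions, where iterating the telescoping identity produces several residual terms whose Sobolev indices must each be checked to close at $H^{p+2}[0,2\pi]$ with the rate $n^{-1}$. No new analytic difficulty arises beyond Theorem \ref{Kn-estimate}: because $\mathcal{J}$ contains no $T_0$ factor there is no loss of regularity to be compensated, which is why the weaker hypothesis $p>1/2$ (rather than $p>3/2$) suffices; the multipliers $E_i$, being bounded, never affect the rate, and $M_0$, being infinitely smoothing, is harmless. Summing the seven scalar estimates and reinstating the uniformly bounded $\mathcal{E}^{-1}$ and $\mathcal{P}_n$ then yields the claim.
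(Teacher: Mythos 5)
Your proposal is correct and follows essentially the same route as the paper's proof: the reduction via $\mathcal{E}_n=\mathcal{E}$ and the uniform boundedness of $\mathcal{P}_n$, the splitting of $\mathcal{J}$ into the blocks $F_j$ and $J_i$ from \eqref{F1234} and \eqref{J1}, and the telescoped estimates that combine the sharp polynomial rates of Remark \ref{estimate} with the interpolation error \eqref{interpolationerror} and the surplus smoothing of the inner factors are exactly the paper's argument (your worked example $\widetilde{S}_0E_3B_1$ is precisely the paper's term $Q_3$). Your explanation that the absence of any $T_0$ factor in $\mathcal{J}$ is what allows the weaker hypothesis $p>1/2$ is likewise consistent with the paper.
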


\begin{proof}
	We claim for any $\varphi\in X_n$ that 
	\begin{align} \label{FJestimate}
		\|F_{j,n}\varphi-F_j\varphi\|_{p+2}
		\preceq 
		\frac{1}{n}\|\varphi\|_{p}, \quad
		\|J_{i,n}\varphi-J_i\varphi\|_{p+2}
		\preceq
		\frac{1}{n}\|\varphi\|_{p}.
	\end{align}
	for $j=1,2,3,4$, $i=1,2,3$. In the following, we only show the proof for $j=1$, $i=1$ since the other cases can be proved similarly. 
	
	First, we have from \eqref{F1234} that 
	\begin{align}\label{F1}
		\|F_{1,n}\varphi-F_1\varphi\|_{p+2}\leq I_1+I_2+I_3+I_4 \quad\forall\, \varphi\in X_n, 
	\end{align} 
	where
	\begin{align*}
		I_1&=\|2E_{2,n}S_{2,n}\varphi-2E_2S_2\varphi\|_{p+2},\\
		I_2&=\|E_{2,n}S_{2,n}E_{2,n}S_{2,n}\varphi-E_2S_2E_2S_2\varphi\|_{p+2},\\
		I_3&=\|S_{0,n}(E_{3,n}S_{1,n}+E_{6,n}S_{2,n})\varphi-S_{0}(E_{3}S_{1}+E_{6}S_{2})\varphi\|_{p+2},\\
		I_4&=\|E_{1,n}S_{2,n}(\frac{1}{2}E_{1,n}S_{0,n}+E_{3,n}S_{1,n}+E_{6,n}S_{2,n})\varphi\\
		&\quad-E_{1}S_{2}(\frac{1}{2}E_{1}S_{0}+E_{3}S_{1}+E_{6}S_{2})\varphi\|_{p+2}.
	\end{align*}
	It is clear to note from \eqref{TSE} that $I_1=0$. Using \eqref{interpolationerror} and Remark \ref{estimate}, and following the same techniques as those in \cite[Theorem 4.4]{DLL2021}, we have
	\begin{align*}
		I_2 &\preceq\|S_{2,n}E_{2,n}S_{2,n}\varphi-S_2E_2S_2\varphi\|_{p+2}=\|(S_{2,n}-S_2)(E_2S_2\varphi-P_nE_2S_2\varphi)\|_{p+2}\\
		&\preceq\|E_2S_2\varphi-P_nE_2S_2\varphi\|_{p}\preceq\frac{1}{n^3}\|E_2S_2\varphi\|_{p+3}
		\preceq\frac{1}{n}\|\varphi\|_p. 
	\end{align*}
	By the uniform boundedness of $S_{0,n}-S_0: H^{p}[0,2\pi]^2\rightarrow H^{p+1}[0,2\pi]^2$, we get
	\begin{align*}
		I_3 &\preceq\|S_{0,n}E_{3,n}S_{1,n}\varphi-S_0E_3S_1\varphi\|_{p+2}+\|S_{0,n}E_{6,n}S_{2,n}\varphi-S_0E_6S_2\varphi\|_{p+2}\\
		&=\|(S_{0,n}-S_0)(E_3S_1\varphi-P_nE_3S_1\varphi)\|_{p+2}+\|(S_{0,n}-S_0)(E_6S_2\varphi-P_nE_6S_2\varphi)\|_{p+2}\\
		&\preceq\|E_3S_1\varphi-P_nE_3S_1\varphi\|_{p+1}+\|E_6S_2\varphi-P_nE_6S_2\varphi\|_{p+1}\\
		&\preceq\frac{1}{n}\|E_3S_1\varphi\|_{p+2}+\frac{1}{n^2}\|E_6S_2\varphi\|_{p+3}
		\preceq\frac{1}{n}\|\varphi\|_p. 
	\end{align*}
	Analogously, we may show $I_4\preceq\frac{1}{n}\|\varphi\|_p$. Combining $I_j\preceq\frac{1}{n}\|\varphi\|_p, ~j=1,2,3,4$ and \eqref{F1} leads to the first inequality in \eqref{FJestimate}. 
	
	Next, it follows from \eqref{J1} that 
	\begin{align*}
		\|J_{1,n}\varphi-J_1\varphi\|_{p+2}\leq Q_1+Q_2+Q_3+Q_4+Q_5+Q_6 \quad\forall\, \varphi\in X_n,
	\end{align*} 
	where
	\begin{align*}
		Q_1&=\|\frac{1}{2}\widetilde{B}_{3,n}\widetilde{S}_{0,n}\varphi-\frac{1}{2}\widetilde{B}_{3}\widetilde{S}_{0}\varphi\|_{p+2}, \qquad~~~~~~
		Q_2=\|\widetilde{S}_{0,n}E_{7,n}S_{2,n}\varphi-\widetilde{S}_{0}E_{7}S_{2}\varphi\|_{p+2},\\
		Q_3&=\|4\widetilde{S}_{0,n}E_{3,n}B_{1,n}\varphi-4\widetilde{S}_{0}E_{3}B_{1}\varphi\|_{p+2}, \quad~
		Q_4=\|\widetilde{S}_{0,n}E_{1,n}B_{2,n}\varphi-\widetilde{S}_{0}E_{1}B_{2}\varphi\|_{p+2},\\
		Q_5&=\|\frac{1}{2}E_{1,n}M_{0,n}S_{0,n}\varphi-\frac{1}{2}E_{1}M_{0}S_{0}\varphi\|_{p+2}, ~
		Q_6=\|M_{0,n}E_{1,n}S_{2,n}\varphi-M_{0}E_{1}S_{2}\varphi\|_{p+2}.
	\end{align*}
	Again, we have from Remark \ref{estimate} that 
	\begin{align*}
		Q_1\preceq\|(\widetilde{B}_{3,n}-\widetilde{B}_3)\widetilde{S}_0\varphi\|_{p+2}
		\preceq\frac{1}{n}\|\widetilde{S}_0\varphi\|_{p+1}\preceq\frac{1}{n}\|\varphi\|_p.
	\end{align*}
	The inequality $Q_2\preceq\frac{1}{n}\|\varphi\|_p$ can be obtained by the similar estimate of the second item of $I_3$. Noting  \eqref{interpolationerror} and the uniform boundedness of $\widetilde{S}_0, ~\widetilde{S}_{0,n}-\widetilde{S}_0: H^{p}[0,2\pi]\rightarrow H^{p+1}[0,2\pi]$, we deduce 
	\begin{align*}
		Q_3 &\preceq\|\widetilde{S}_{0,n}(E_{3,n}B_{1,n}-E_3B_1)\varphi\|_{p+2}+\|(\widetilde{S}_{0,n}-\widetilde{S}_0)(E_3B_1\varphi-P_nE_3B_1\varphi)\|_{p+2}\\
		&\preceq\|(E_{3,n}B_{1,n}-E_3B_1)\varphi\|_{p+1}+\|E_3B_1\varphi-P_nE_3B_1\varphi\|_{p+1}\\
		&\preceq\|(B_{1,n}-B_1)\varphi\|_{p+1}+\frac{1}{n}\|E_3B_1\varphi\|_{p+2}\\
		&\preceq\frac{1}{n}\|\varphi\|_p+\frac{1}{n}\|\varphi\|_p\preceq\frac{1}{n}\|\varphi\|_p.
	\end{align*}
	Analogously, $Q_4$ can be estimate as  
	\begin{align*}
	Q_4&\preceq\|(B_{2,n}-B_2)\varphi\|_{p+1}+\frac{1}{n}\|E_3B_2\varphi\|_{p+2}\\
	&\leq\|(B_{2,n}-B_2)\varphi\|_{p+2}+\frac{1}{n}\|E_3B_2\varphi\|_{p+3}\preceq\frac{1}{n}\|\varphi\|_p.
    \end{align*}
	Clearly, $Q_5=0$ by noting $S_{0,n}\varphi=S_0\varphi\in X_n$. It follows from \cite[Theorem A.45]{Kirsch2011-book} that $M_0$ is a bounded operator from $H^p[0,2\pi]$ into $H^r[0,2\pi]$ for every $-r\leq p\leq r$. Hence, we have
	\begin{align*}
		Q_6\preceq\|P_nE_1S_2\varphi-E_1S_2\varphi\|_{p+2}\preceq\frac{1}{n}\|E_1S_2\varphi\|_{p+3}\preceq\frac{1}{n}\|\varphi\|_p,
	\end{align*}
	which implies the second inequality of \eqref{FJestimate}. 
	
	Combining \eqref{Jn-estimate} and \eqref{Jdef}, we complete the proof by noting that the operators $\mathcal{E}^{-1},\mathcal{P}_n: H^{p+2}[0,2\pi]^2\rightarrow H^{p+2}[0,2\pi]^2$ are uniformly bounded. 
\end{proof}

The following result concerns the convergence of the full-discrete scheme.  Based on the uniform boundedness of the operators $F_{j,n}-F_j, J_{i,n}-J_i: H^{p}[0,2\pi]\rightarrow H^{p+2}[0,2\pi]$ from Remark \ref{estimate} and \eqref{Bestimate}, the proof is similar to that of \cite[Theorem 4.5]{DLL2021} and is omitted here for brevity. 

\begin{theorem}\label{fullconvergence}
Let $\psi$ be the unique solution to \eqref{Matrixequation}. For $p>3/2$ and sufficiently large $n$, the fully discrete equation \eqref{fullcollocation} admits a unique solution $\widetilde\psi^n$, which satisfies 
	\begin{align*}
		&\|\widetilde\psi^n-\psi\|_p\\
		&\preceq\|\mathcal{P}_n(\mathcal{E}^{-1}\mathcal{U})\psi-\mathcal{E}^{-1}\mathcal{U}\psi\|_{p+2}+\|\mathcal{P}_n[\mathcal{E}_n^{-1}(\mathcal{J}_n+\mathcal{K}_n)-\mathcal{E}^{-1}(\mathcal{J}+\mathcal{K})]\psi\|_{p+2}\\
		&\quad +\|\mathcal{P}_n[\mathcal{E}_n^{-1}\mathcal{A}_n-\mathcal{E}^{-1}\mathcal {A}]
		\eta\|_{p+2}. 
	\end{align*}
\end{theorem}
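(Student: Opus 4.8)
The plan is to establish convergence by combining a uniform stability estimate for the fully discrete operator with a consistency estimate for the exact solution, following the collocation framework of \cite[Theorem 4.5]{DLL2021}. On $X_n^2$ introduce the semidiscrete operator $\mathcal{L}_n:=\mathcal{E}^{-1}\mathcal{U}+\mathcal{P}_n\mathcal{E}^{-1}(\mathcal{J}+\mathcal{V})$ and the fully discrete operator $\widetilde{\mathcal{L}}_n:=\mathcal{E}^{-1}\mathcal{U}+\mathcal{P}_n\mathcal{E}_n^{-1}(\mathcal{J}_n+\mathcal{V}_n)$, where we have used $\mathcal{E}_n^{-1}\mathcal{U}_n=\mathcal{E}^{-1}\mathcal{U}$ on $X_n^2$ together with \eqref{isomor}. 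The proof of Theorem \ref{semi-convergence} shows that, for $n$ large, $\mathcal{L}_n$ is boundedly invertible as a map $(X_n^2,\|\cdot\|_{p})\to(X_n^2,\|\cdot\|_{p+2})$ with $\|\mathcal{L}_n^{-1}\|$ bounded independently of $n$; this uniform stability is the input I would carry over.

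First I would transfer stability to $\widetilde{\mathcal{L}}_n$. By Theorems \ref{Kn-estimate} and \ref{Jn-estimate}, the quadrature perturbation obeys
\[
\big\|(\widetilde{\mathcal{L}}_n-\mathcal{L}_n)\phi\big\|_{p+2}=\big\|\mathcal{P}_n\big[\mathcal{E}_n^{-1}(\mathcal{J}_n+\mathcal{V}_n)-\mathcal{E}^{-1}(\mathcal{J}+\mathcal{V})\big]\phi\big\|_{p+2}\preceq \tfrac{1}{n}\|\phi\|_{p}\quad\forall\,\phi\in X_n^2,
\]
so $\big\|\mathcal{L}_n^{-1}(\widetilde{\mathcal{L}}_n-\mathcal{L}_n)\big\|\preceq 1/n\to0$ as an operator on $(X_n^2,\|\cdot\|_p)$. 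A Neumann-series argument applied to $\widetilde{\mathcal{L}}_n=\mathcal{L}_n\big(I+\mathcal{L}_n^{-1}(\widetilde{\mathcal{L}}_n-\mathcal{L}_n)\big)$ then shows that $\widetilde{\mathcal{L}}_n$ is invertible for all sufficiently large $n$, with $\|\widetilde{\mathcal{L}}_n^{-1}\|$ uniformly bounded; this is exactly the unique solvability of \eqref{fullcollocation}.

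Next I would estimate the error through the splitting $\|\widetilde\psi^n-\psi\|_p\le\|\widetilde\psi^n-\psi^n\|_p+\|\psi^n-\psi\|_p$, where $\psi^n$ solves the semidiscrete scheme \eqref{semicollocation}. The second summand is precisely the first term on the right-hand side of the claimed bound by Theorem \ref{semi-convergence}. For the first summand, both iterates lie in $X_n^2$, so the uniform stability just obtained gives $\|\widetilde\psi^n-\psi^n\|_p\preceq\|\widetilde{\mathcal{L}}_n\widetilde\psi^n-\widetilde{\mathcal{L}}_n\psi^n\|_{p+2}$. Since $\widetilde{\mathcal{L}}_n\widetilde\psi^n=\mathcal{P}_n\mathcal{E}_n^{-1}\mathcal{A}_n\eta$ and $\mathcal{L}_n\psi^n=\mathcal{P}_n\mathcal{E}^{-1}\mathcal{A}\eta$, the residual reduces to
\[
\widetilde{\mathcal{L}}_n\widetilde\psi^n-\widetilde{\mathcal{L}}_n\psi^n=\mathcal{P}_n\big[\mathcal{E}_n^{-1}\mathcal{A}_n-\mathcal{E}^{-1}\mathcal{A}\big]\eta-\mathcal{P}_n\big[\mathcal{E}_n^{-1}(\mathcal{J}_n+\mathcal{V}_n)-\mathcal{E}^{-1}(\mathcal{J}+\mathcal{V})\big]\psi^n,
\]
whose first summand is the third term on the right-hand side of the claimed bound.

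It remains to replace the consistency term evaluated at $\psi^n$ by the one at the exact solution $\psi$. Writing $\psi^n=\psi+(\psi^n-\psi)$ and invoking the uniform boundedness of $\mathcal{P}_n[\mathcal{E}_n^{-1}(\mathcal{J}_n+\mathcal{V}_n)-\mathcal{E}^{-1}(\mathcal{J}+\mathcal{V})]:H^p[0,2\pi]^2\to H^{p+2}[0,2\pi]^2$ — which follows from the uniform boundedness of $F_{j,n}-F_j$ and $J_{i,n}-J_i$ noted in Remark \ref{estimate} together with \eqref{Bestimate} — the $(\psi^n-\psi)$ contribution is $\preceq\|\psi^n-\psi\|_p$ and is thus absorbed into the first term via Theorem \ref{semi-convergence}, while the $\psi$ contribution is the second term of the claimed bound. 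Assembling the three pieces yields the asserted estimate. The main obstacle is the stability step: one must confirm that the semidiscrete inverse is uniformly bounded and that the quadrature perturbation is $o(1)$ in the operator norm on $(X_n^2,\|\cdot\|_p)$, for which the $X_n^2$-restricted rates of Theorems \ref{Kn-estimate} and \ref{Jn-estimate} are precisely tailored; the later passage from $\psi^n$ to $\psi$ instead relies on the coarser $H^p$-uniform bounds rather than those sharp rates.
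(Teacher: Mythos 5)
Your proposal is correct and reconstructs precisely the argument the paper intends: the paper omits this proof, citing \cite[Theorem 4.5]{DLL2021}, and your stability--consistency framework (uniform invertibility of the semidiscrete operator, Neumann-series transfer of stability to the fully discrete operator via Theorems \ref{Kn-estimate}--\ref{Jn-estimate}, splitting the error through $\psi^n$, and replacing $\psi^n$ by $\psi$ using the $H^p$-uniform boundedness of the quadrature perturbations from Remark \ref{estimate} and \eqref{Bestimate}) is exactly that argument, matching the ingredients the paper names in its preamble to the theorem. Your residual identity and the absorption of the $(\psi^n-\psi)$ contribution into the first term of the bound are both verified correctly, so there is no gap (note only that the paper's $\mathcal{K}$, $\mathcal{K}_n$ in the statement are the operators you call $\mathcal{V}$, $\mathcal{V}_n$).
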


\section{Numerical experiments}\label{s_ne}

In this section, we present numerical implementation and show some examples to demonstrate the superior performance of the proposed method. In particular, we introduce a simple alternative boundary integral formulation by using a combination of the single- and single-layer potentials. 

\subsection{Double-single layer potential formulation}

As discussed in section 3, the boundary integral formulation is based on a combination of the double- and single-layer potentials. Here we introduce two different approaches to solve the corresponding full-discrete boundary integral equations. 

\subsubsection{Approach 1}

The first approach is to solve directly the equivalent full-discrete equation of \eqref{direct parafield}, i.e., 
\begin{align*}
	\Bigg(\left[     
	\begin{array}{cc}
		I     & 0 \\ 
		\widetilde{T}_0   & -I
	\end{array}
	\right]+\left[     
	\begin{array}{cc}
		L      & S \\ 
		R-H   & K
	\end{array}
	\right]\Bigg)\left[                  
	\begin{array}{c}
		\psi_1 \\ 
		\psi_2
	\end{array}
	\right]=\left[                  
	\begin{array}{c}
		\eta_1 \\ 
		\eta_2
	\end{array}
	\right],
\end{align*}
where $\widetilde{T}_0=T_0- M_0$. By the decomposition \eqref{kerdecom_chi}, we apply the simple trapezoidal for the smooth integrals: 
\begin{align*} 
	\int_{0}^{2\pi}f(\zeta)\mathrm{d}\zeta\approx\frac{\pi}{n}\sum_{j=0}^{
		2n-1}f(\zeta_j^{(n)}), 
\end{align*}
while we employ the quadrature rules via the trigonometric interpolation for the weakly singular integrals (cf. \cite{DR-shu2,Kress1995}]:
\begin{align*}
	\int_{0}^{2\pi}\ln\Big(4\sin^2\frac{t-\zeta}{2}\Big)f(\zeta)\,\mathrm{d}
	\zeta&\approx\sum_{j=0}^{2n-1}R_j^{(n)}(t)f(\zeta_j^{(n)}),\\
	\frac{1}{2\pi}\int_0^{2\pi}\cot\frac{\zeta-t}{2}f'(\zeta)\,\mathrm{d}\zeta&\approx\sum_{j=0}^{2n-1}T_j^{(n)}(t)f(\zeta_j^{(n)}),
\end{align*}
where the quadrature weights are given by
\begin{align*}
	&R_j^{(n)}(t)=-\frac{2\pi}{n}\sum_{m=1}^{n-1}\frac{1}{m}\cos\Big[m(t-\zeta_j^{(n)})\Big]
	-\frac{\pi}{n^2}\cos\Big[n(t-\zeta_j^{(n)})\Big],\\
	&T_j^{(n)}(t)=-\frac{1}{n}\sum_{m=1}^{n-1}m\cos\Big[m(t-\zeta_j^{(n)})\Big]
	-\frac{1}{2}\cos\Big[n(t-\zeta_j^{(n)})\Big].
\end{align*}

\subsubsection{Approach 2}

The second approach is to solve the
equivalent full-discrete equation of \eqref{Matrixequation}, i.e., 
\begin{align}\label{transformfull}
	\mathcal{P}_n\mathcal{N}_n\widetilde\psi^{n}+\mathcal{P}_n\mathcal{D}
	_n\widetilde\psi^{n}=\mathcal{P}_n\eta
	~\Leftrightarrow~
	A_n\widetilde\psi^{n}=\eta^{n},
\end{align}
where $A_n$ is the coefficient matrix of the full-discrete equation. As mentioned in \cite[section 5]{DLL2021}, it is more convenient to handle the equivalent full-discrete equation \eqref{transformfull} due to the simple quadrature operators $\mathcal{N}_n$ and $\mathcal{D}_n$. 

To handle the singular integrals $S_1$ and $S_2$, we use the trigonometric interpolation in the quadrature rules
\begin{align*}
	\int_0^{2\pi}\ln\Big(4\sin^2\frac{t-\zeta}{2}
	\Big)\sin(t-\zeta)f(\zeta)\,\mathrm{d}\zeta&\approx\sum_{j=0}^{2n-1}
	V_j^{(n)}(t)f(\zeta_j^{(n)}),\\
	\int_0^{2\pi}\ln\Big(4\sin^2\frac{t-\zeta}{2}
	\Big)\sin^2(t-\zeta)f(\zeta)\,\mathrm{d}\zeta&\approx\sum_{j=0}^{2n-1}
	W_j^{(n)}(t)f(\zeta_j^{(n)}),
\end{align*}
where the integral weights are defined by
\begin{align*}
	V_j^{(n)}(t)&=-\frac{\pi}{2n}\sin(\zeta_j^{(n)}-t)+\frac{2\pi}{n}\sum_{m=2}^{n-1}\frac{\sin[m(\zeta_j^{(n)}-t)]}{m^2-1}+\frac{\pi\sin[n(\zeta_j^{(n)}-t)]}{n(n^2-1)},\\
	W_j^{(n)}(t)&=\frac{\pi}{n}\Big\{\frac{1}{4}+\frac{2}{3}\cos(t-\zeta_j^{(n)})+\frac{1}{8}\cos[2(t-\zeta_j^{(n)})]\Big\}+\frac{1}{2}R_j^{(n)}(t)\\
	&\quad+\frac{\pi}{n}\sum_{m=3}^{n-1}\frac{m}{m^2-4}\cos[m(t-\zeta_j^{(n)})]+\frac{\pi}{2(n^2-4)}\cos[n(t-\zeta_j^{(n)})].
\end{align*}

\begin{remark}
	By a straightforward calculation, we get 
	\begin{align*}
		&W_j^{(n)}(t)-R_j^{(n)}(t)\sin^2(t-\zeta_j^{(n)})\\
		&=\frac{\pi}{2n^2}\sin[n(t-\zeta_j^{(n)})]\sin[2(t-\zeta_j^{(n)})]+\frac{\pi}{n^2-n}\sin [n(t-\zeta_j^{(n)})]\sin(t-\zeta_j^{(n)})\\
		&\quad -\frac{\pi}{n(n-1)(n+1)}\cos[(n-1)(t-\zeta_j^{(n)})]-\frac{\pi}{n(n-2)(n+2)}\cos[n(t-\zeta_j^{(n)})].
	\end{align*}
	It is clear to note that $W_j^{(n)}(\zeta_i^{(n)})-R_j^{(n)}(\zeta_i^{(n)})\sin^2(\zeta_i^{(n)}-\zeta_j^{(n)})\not=0$, which is different from \cite[Remark 5.1]{DLL2021}. However, we have $W_j^{(n)}(\zeta_i^{(n)})-R_j^{(n)}(\zeta_i^{(n)})\sin^2(\zeta_i^{(n)}-\zeta_j^{(n)})\to0$ as $n\to\infty$. 	 
\end{remark}

\begin{table} 
	\caption{The parameterized boundary curves.}
	\label{boundary}
	\begin{tabular}{lll}
		\toprule[1pt]
		Boundary type           & Parameterization\\
		\midrule  
		Apple-shaped   & $\gamma(t)=\displaystyle\frac{0.55(1+0.9\cos{t}+0.1\sin{2t})}{1+0.75\cos{t}}(\cos{t}, \sin{t}), \quad t\in [0,2\pi]$  
		\vspace{1.5ex}\\ 
		Peanut-shaped  & 
		$\gamma(t)=0.275\sqrt{3\cos^2{t}+1}(\cos{t},\sin{t}), \quad
		t\in[0,2\pi]$
		\vspace{1.5ex} \\ 
		Peach-shaped  & 
		$\gamma(t)=0.22(\cos^2{t}\sqrt{1-\sin{t}}+2)(\cos{t}, \sin{t}), \quad t\in[0,2\pi]$
		\vspace{1.5ex}\\ 
		Drop-shaped  & 
		$\displaystyle \gamma(t)=(2\sin{\frac{t}{2}}-1, -\sin{t}), \quad t\in[0,2\pi]$ 
		\vspace{1.5ex}\\
		Heart-shaped  & 
		$\displaystyle \gamma(t)=(\frac{3}{2}\sin{\frac{3t}{2}}, \sin{t}), \quad t\in[0,2\pi]$\\
		\bottomrule[1pt]
	\end{tabular}
\end{table}

\subsection{Single-single layer potential formulation}

As a comparison, we introduce a computationally more convenient boundary integral formulation by using the single- and single-layer potentials. Using the fundamental solutions to the Helmholtz and modified Helmholtz equations, we represent the solutions of \eqref{vHvM} in terms of two single-layer potentials 
\begin{align}\label{singlelayer}
	v_{\rm H}(x)=\int_\Gamma G_{\rm H}(x,y)g_1(y)\mathrm{d}s(y), \quad v_{\rm M}(x)=\int_\Gamma G_{\rm M}(x,y)g_2(y)\mathrm{d}s(y),\quad x\in\mathbb{R}^2\setminus\Gamma_D,
\end{align}
where $g_1$ and $g_2$ are densities.

Using the boundary condition \eqref{bcv} and the jump relation of the single-layer potentials \eqref{singlelayer}, we deduce the coupled boundary integral equations
\begin{align}
	\begin{split}\label{boundaryIE}
		-u^{\rm inc}(x)&=\int_{\Gamma}G_{\rm H}(x,y)g_1(y)\mathrm{d}s(y)+ \int_{\Gamma}G_{\rm M}(x,y)g_2(y)\mathrm{d}s(y),\\
		-\frac{\partial u^{\rm inc}(x)}{\partial\nu(x)}&=-\frac{1}{2}g_1(x)+\int_{\Gamma}\frac{\partial G_{\rm H}(x,y)}{\partial\nu(x)}g_1(y)\mathrm{d}s(y)\\
		&\quad-\frac{1}{2}g_2(x)+\int_{\Gamma}\frac{\partial G_{\rm M}(x,y)}{\partial\nu(x)}g_2(y)\mathrm{d}s(y).
	\end{split}
\end{align}
Similarly, we may obtain the following parametric form of \eqref{boundaryIE}: 
\begin{align*}
	\mathcal{A}\psi\overset{\text{def}}{=}\left[     
	\begin{array}{cc}
		S^{\rm H} & S^{\rm M} \\ 
		-I+K^{\rm H}    & -I+K^{\rm M}
	\end{array}
	\right]\left[                  
	\begin{array}{c}
		\psi_1 \\ 
		\psi_2
	\end{array}
	\right]=\left[                  
	\begin{array}{c}
		\eta_1 \\ 
		\eta_2
	\end{array}
	\right],
\end{align*}
where $\eta_1=2(f_1\circ \gamma)$, $\eta_2=2(f_2\circ \gamma)|\gamma'|$, $\psi_j=(g_j\circ \gamma)|\gamma'|$, $j=1,
2$, and 
\begin{align*}
	(S^\sigma\phi)(t)&=\int_0^{2\pi}s^\sigma(t,\zeta)
	\phi(\zeta)\mathrm{d}\zeta,\quad 
	(K^\sigma\phi)(t)=\int_0^{2\pi}k^\sigma(t,\zeta)\phi(\zeta)\mathrm{d}\zeta
\end{align*}
with the kernels being given by
\begin{align*}
	s^\sigma(t,\zeta)&=\frac{\mathrm{i}}{2}H_0^{(1)}(\kappa_\sigma|\gamma(t)-\gamma(\zeta)|),\\
	k^\sigma(t,\zeta)&=\frac{\mathrm{i}\kappa_\sigma}{2}n(t)\cdot[\gamma(\zeta)-\gamma(t)] \frac{H_1^{(1)}(\kappa_\sigma|\gamma(t)-\gamma(\zeta)|)}{|\gamma(t)-\gamma(\zeta)|}.
\end{align*}
Here $\sigma={\rm H}$ or ${\rm M}$, $\kappa_{\rm H}=\kappa$, and $\kappa_{\rm M}={\rm i}\kappa$. The kernels $s^\sigma(t,\zeta)$ and $k^\sigma(t,\zeta)$ can be decomposed into 
\begin{align*}
	s^\sigma(t,\zeta)&= s_1^\sigma(t,
	\zeta)\ln\Big(4\sin^2\frac{t-\zeta}{2}\Big)+s_2^\sigma(t,\zeta),\\
	k^\sigma(t,\zeta)&= k_1^\sigma(t,
	\zeta)\ln\Big(4\sin^2\frac{t-\zeta}{2}\Big)+k_2^\sigma(t,\zeta),
\end{align*}
where the functions 
\begin{align*}
	s_1^\sigma(t,\zeta)&= -\frac{1}{2\pi}J_0(\kappa_\sigma|\gamma(t)-\gamma(\zeta)|), \\
	s_2^\sigma(t,\zeta)&=m^\sigma(t,\zeta)-m_1^\sigma(t,
	\zeta)\ln\Big(4\sin^2\frac{t-\zeta}{2}\Big),\\
	k_1^\sigma(t,\zeta)&= \frac{\kappa_\sigma}{2\pi}n(t)\cdot\big[
	\gamma(t)-\gamma(\zeta)\big]\frac{J_1(\kappa_\sigma|\gamma(t)-\gamma(\zeta)|)}{
		|\gamma(t)-\gamma(\zeta)|}, \\
	k_2^\sigma(t,\zeta)&=k^\sigma(t,\zeta)-k_1^\sigma(t,
	\zeta)\ln\Big(4\sin^2\frac{t-\zeta}{2}\Big)
\end{align*}
are analytic and their diagonal entries are given by 
\begin{align*}
	s_1^\sigma(t,t)=-\frac{1}{2\pi},\quad   s_2^\sigma(t,t)=
	\frac{\mathrm{i}}{2}-\frac{C}{\pi}-\frac{1}{\pi}\ln\big(\frac{\kappa_\sigma}{2}|\gamma'(t)|\big),
\end{align*}	
and
\begin{align*}
	k_1^\sigma(t,t)=0, \quad k_2^\sigma(t,t)=
	\frac{1}{2\pi}\frac{n(t)\cdot \gamma''(t)}{|\gamma'(t)|^2}.
\end{align*}

Compared with \eqref{cboundaryIE}, the coupled system \eqref{boundaryIE} is simpler computationally since it does not involve any hypersingular integral kernel. Theoretically, it is unclear whether the boundary integral equations \eqref{boundaryIE} and the corresponding parametric form have a unique solution. The analysis will be left for a future work.

\subsection{Numerical examples}

In this section, we present some numerical examples of both smooth and nonsmooth cavities to demonstrate the performance of the proposed methods. The code is written in Matlab using double precision arithmetic and the computations are run on a personal computer with a 64 GB RAM, 3.70 GHz Intel core i9 processor. 

\begin{table}
	\centering 
	\caption{Numerical errors of the wave fields for the apple- and peanut-shaped cavities by using the first approach for the double-single layer potential formulation.} 
	\label{numerror2} 
	\begin{tabular}{c|c|c|c|c|c|c}  
		\toprule[1pt]
		& \multicolumn{3}{c|}{Apple-shaped} & \multicolumn{3}{c}{Peanut-shaped}  \\ 
		\cline{2-7}
		$n$&$\|v_{\rm H}^*-v_{\rm H}^{(n)}\|$    &$\|v_{\rm M}^*-v_{\rm M}^{(n)}\|$
		&time&$\|v_{\rm H}^*-v_{\rm H}^{(n)}\|$ &$\|v_{\rm M}^*-v_{\rm M}^{(n)}\|$ &time\\
		\hline
		8&9.1028e-03&4.9959e-03&0.008s &6.4240e-03&3.3783e-03&0.006s\\
		16&1.9452e-03&7.4528e-04&0.01s&2.6370e-04&7.9816e-05&0.01s \\
		32&8.0695e-08&4.8943e-08&0.03s&5.1058e-08&1.5750e-08&0.02s \\
		64&2.2710e-12&6.4901e-13&0.08s&7.9013e-14&2.7135e-14&0.07s \\
		128&1.0169e-12&4.4399e-13&0.16s&1.1928e-12&3.8851e-13&0.15s \\
		\bottomrule[1pt] 
	\end{tabular}
\end{table}

\begin{table}
	\centering 
	\caption{Numerical errors of the far-field patterns for the apple- and peanut-shaped cavities by using the first approach for the double-single layer potential formulation.} 
	\label{numerrorfarfield} 
	\begin{tabular}{c|c|c|c|c}  
		\toprule[1pt]
		& \multicolumn{2}{c|}{Apple-shaped} & \multicolumn{2}{c}{Peanut-shaped}  \\ 
		\cline{2-5}
		$n$&$\|v_\infty^{(n_*)}-v_\infty^{(n)}\|$   
		&time&$\|v_\infty^{(n_*)}-v_\infty^{(n)}\|$  &time\\
		\hline
		8&2.9366e-02&0.007s &1.7153e-03&0.004s\\
		16&8.0457e-04&0.009s&5.8490e-07&0.007s \\
		32&2.2996e-07&0.02s&5.0560e-10&0.02s \\
		64&4.4398e-10&0.08s&5.0548e-10&0.07s \\
		128&4.4586e-10&0.16s&5.0677e-10&0.14s \\
		\bottomrule[1pt] 
	\end{tabular}
\end{table}

\begin{figure}
	\centering 
	\subfigure[$\Re(v^{(n_*)}_\infty)$ and $\Re(v^{(n)}_\infty)$]
	{\includegraphics[width=0.48\textwidth]{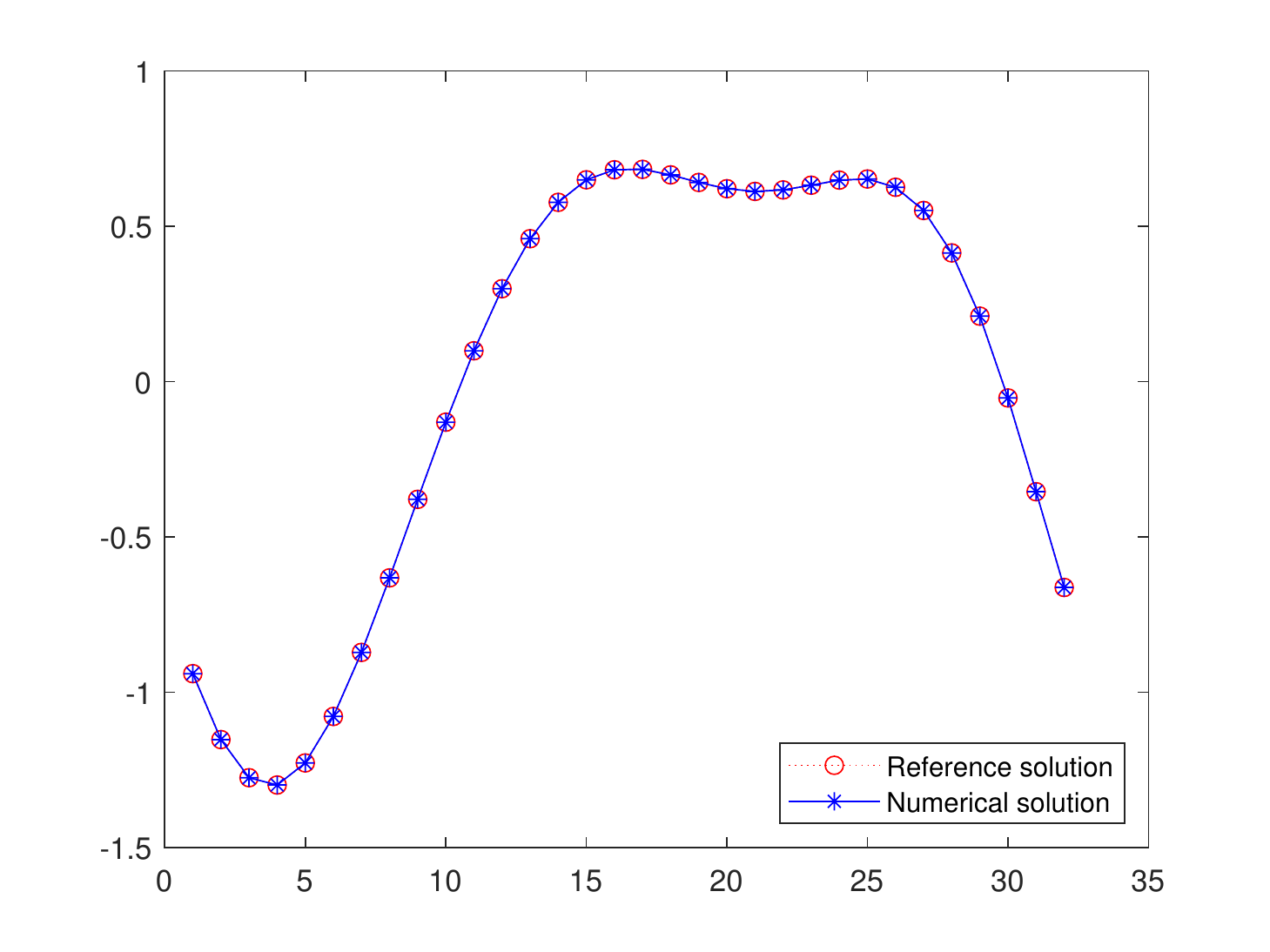}}
	\subfigure[$\Im(v^{(n_*)}_\infty)$ and $\Im(v^{(n)}_\infty)$]
	{\includegraphics[width=0.48\textwidth]{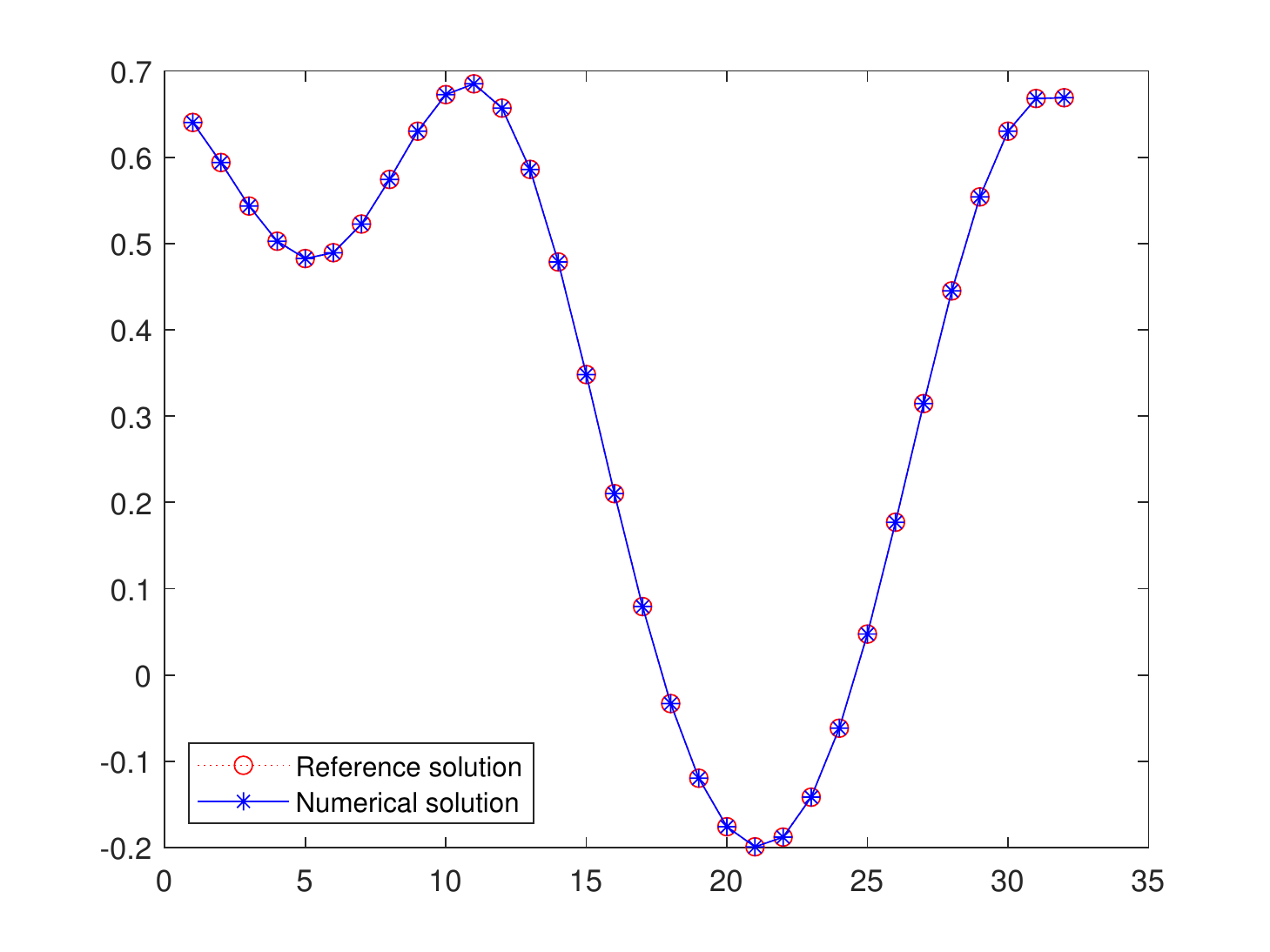}} 
	\caption{The reference and numerical solutions for the apple-shaped cavity: (a) the real part of the far-field pattern; (b) the imaginary part of the far-field pattern.}\label{farsolutions}
\end{figure}

\begin{table}
	\centering 
	\caption{Numerical errors of the wave fields for the apple- and peanut-shaped cavities by using the second approach for the double-single layer potential formulation.} 
	\label{numerror3} 
	\begin{tabular}{c|c|c|c|c|c|c}  
		\toprule[1pt]
		& \multicolumn{3}{c|}{Apple-shaped} & \multicolumn{3}{c}{Peanut-shaped}  \\ 
		\cline{2-7}
		$n$&$\|v_{\rm H}^*-v_{\rm H}^{(n)}\|$    &$\|v_{\rm M}^*-v_{\rm M}^{(n)}\|$
		&time&$\|v_{\rm H}^*-v_{\rm H}^{(n)}\|$ &$\|v_{\rm M}^*-v_{\rm M}^{(n)}\|$ &time\\
		\hline
		8&3.6851e-02&2.0239e-02&0.006s &1.1121e-02&6.5680e-03&0.007s\\
		16&1.7791e-03&1.2935e-03&0.008s&2.6716e-04& 8.4432e-05&0.01s \\
		32&3.4434e-07&9.9280e-08&0.02s&5.1173e-08& 1.5776e-08&0.02s \\
		64& 2.2717e-12&6.4932e-13&0.08s&7.8660e-14&2.7224e-14&0.08s \\
		128&1.0171e-12&4.4408e-13&0.17s&1.1929e-12&3.8858e-13&0.16s \\
		\bottomrule[1pt] 
	\end{tabular}
\end{table}

\begin{table}
	\centering 
	\caption{Numerical errors of the wave fields for the apple- and peach-shaped cavities by using the single-single layer potential formulation.} 
	\label{numerror4} 
	\begin{tabular}{c|c|c|c|c|c|c}  
		\toprule[1pt]
		& \multicolumn{3}{c|}{Apple-shaped} & \multicolumn{3}{c}{Peach-shaped}  \\ 
		\cline{2-7}
		$n$&$\|v_{\rm H}^*-v_{\rm H}^{(n)}\|$    &$\|v_{\rm M}^*-v_{\rm M}^{(n)}\|$
		&time&$\|v_{\rm H}^*-v_{\rm H}^{(n)}\|$ &$\|v_{\rm M}^*-v_{\rm M}^{(n)}\|$ &time\\
		\hline
		8&1.5871e-02&7.9131e-03&0.008s &2.2946e-03&1.1103e-03&0.008s\\
		16&1.6863e-02&9.6869e-03&0.01s&3.1410e-04&1.1890e-04&0.01s \\
		32&5.9174e-10&4.5291e-10&0.02s&3.9650e-05&1.4962e-05&0.02s \\
		64&7.1981e-15&5.9586e-15&0.06s&4.9690e-06&1.8742e-06&0.06s \\
		128&1.1133e-15&3.0029e-16&0.13s&6.2163e-07&2.3443e-07&0.12s \\
		\bottomrule[1pt] 
	\end{tabular}
\end{table}

\begin{table}
	\centering 
	\caption{Numerical errors of the far-field patterns for the apple- and peach-shaped cavities by using the 
		single-single layer potential formulation.} 
	\label{numerrorfarfield1} 
	\begin{tabular}{c|c|c|c|c}  
		\toprule[1pt]
		& \multicolumn{2}{c|}{Apple-shaped} & \multicolumn{2}{c}{Peach-shaped}  \\ 
		\cline{2-5}
		$n$&$\|v_\infty^{(n_*)}-v_\infty^{(n)}\|$   
		&time&$\|v_\infty^{(n_*)}-v_\infty^{(n)}\|$  &time\\
		\hline
		8&3.3854e-03&0.003s &5.4010e-04&0.003s\\
		16&1.0286e-03&0.005s&1.4150e-05&0.005s \\
		32&4.7122e-10&0.01s&1.6853e-06&0.01s \\
		64&1.8565e-14&0.04s&2.0524e-07&0.05s \\
		128&1.4716e-14&0.11s&2.5289e-08&0.11s \\
		\bottomrule[1pt] 
	\end{tabular}
\end{table}
\begin{figure}
	\centering 
	\subfigure[$\Re(v^{(n_*)}_\infty)$ and $\Re(v^{(n)}_\infty)$]
	{\includegraphics[width=0.48\textwidth]{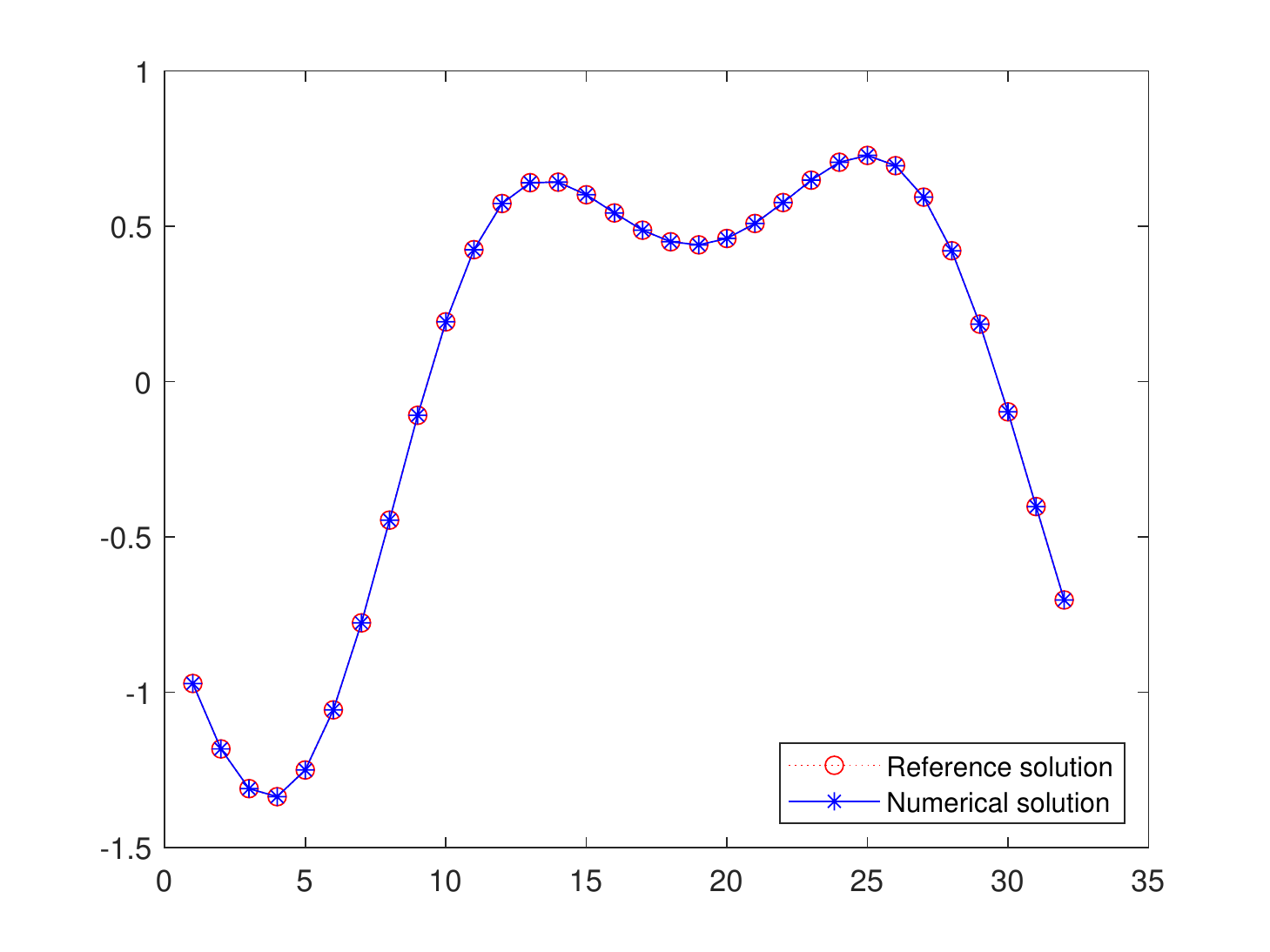}}
	\subfigure[$\Im(v^{(n_*)}_\infty)$ and $\Im(v^{(n)}_\infty)$]
	{\includegraphics[width=0.48\textwidth]{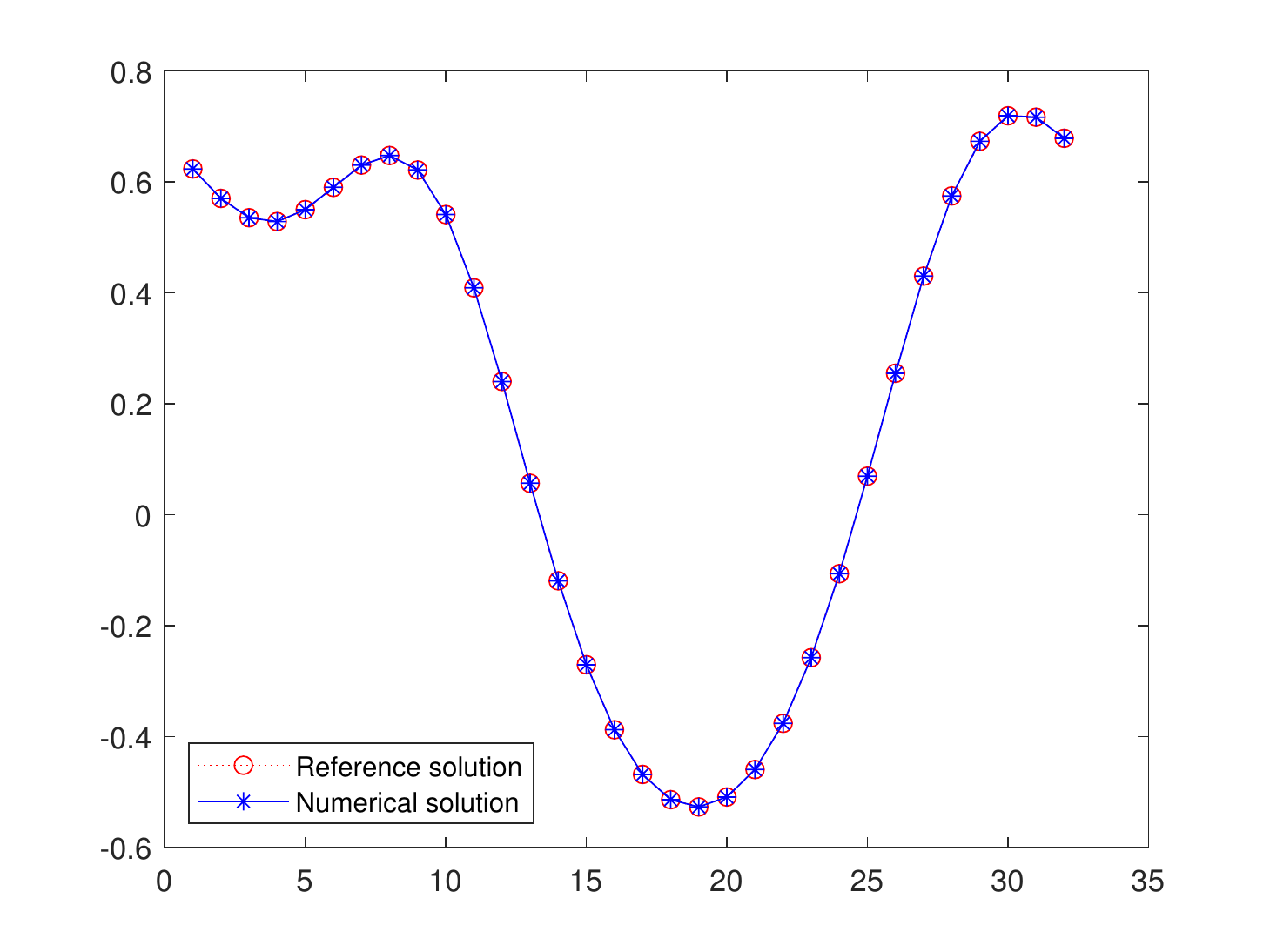}} 
	\caption{The reference and numerical solutions for the peach-shaped cavity: (a) the real part of the far-field pattern; (b) the imaginary part of the far-field pattern.}\label{farsolutions_peach}
\end{figure}

\subsubsection{Smooth cavities}

First, we consider the scattering by smooth cavities: the apple-, peanut-, and peach-shaped cavities. Table \ref{boundary} shows the parameterized equations for the boundary curves, where the appled- and peanut-shaped cavities have analytic boundary curves while the peach-shaped cavity has a $\mathcal{C}^2$ boundary curve.
These examples have been commonly used for benchmark tests of various scattering problems, e.g., the elastic obstacle scattering problem \cite{DLL2021}.

Let us begin with an example which has an analytic solution in order to test the accuracy of the methods. We consider a point source located at $\bar{x}=(0.1,0.2)^\top\in D$ and construct the corresponding exact solutions to the Helmholtz and modified Helmholtz equations 
\begin{align}\label{exact solution}
	v_{\rm H}^*(x)=H_0^{(1)}(\kappa|x-\bar{x}|),\quad v_{\rm M}^*(x)=H_0^{(1)}(\mathrm{i}\kappa|x-\bar{x}|),\quad x\in\mathbb{R}^2\setminus\overline{D},
\end{align}
which satisfy the Sommerfeld radiation condition \eqref{sc1}. Then the numerical solution is obtained by solving the boundary value problem \eqref{vHvM}--\eqref{sc1} with the following boundary conditions on $\Gamma$:
\begin{align*}
	\tilde f_1=v_{\rm H}^*+v_{\rm M}^*,\quad \tilde f_2=\partial_\nu v_{\rm H}^*+\partial_\nu v_{\rm M}^*. 
\end{align*}
Tables \ref{numerror2}, \ref{numerror3}, and \ref{numerror4} list the $L^2(\partial B_1)$ errors of the wave fields $v_{\rm H}$ and $v_{\rm M}$ at the wavenumber $\kappa=2$, where $\partial B_1=\{x\in\mathbb R^2: |x|=1\}$. Specifically, Tables \ref{numerror2} and \ref{numerror3} show the results by using the first and second approaches for the double-single layer potential formulation, respectively; Table \ref{numerror4} gives the results by using the single-single layer potential formulation. 

Now, we consider the scattering problem that the cavity is illuminated by a plane wave. Since there are no analytic solutions, we use the numerical solution with the number of collocation points $n_*=2048$ as the reference solution. We 
take the incident angle $\theta=\pi/6$, solve the boundary integral equations, and use \eqref{singlelayer_far} to  calculate the corresponding far-field patterns $v_\infty^{(n_*)}$ at 32 observation points which are uniformly distributed on the unit circle $\Omega$. To examine the convergence, the reference solutions are compared with the numerical solutions computed by using different numbers of collocation points. Table \ref{numerrorfarfield} shows the errors of the far-field patterns for the apple- and peanut-shaped cavities by using the first approach for the double-single layer potential formulation. Table \ref{numerrorfarfield1} shows the errors of the far-field patterns for the apple- and peach-shaped cavities by using the single-single layer potential formulation. Figures \ref{farsolutions} and \ref{farsolutions_peach} plot the real and imaginary parts of the reference solution $v_\infty^{(n_*)}$ and the numerical solution $v_\infty^{(n)}$ with the number of collocation points $n=32$ for the apple- and peach-shaped cavities, respectively. Since the results are similar for the second approach to the double-single layer potential formulation, they are not shown here for brevity. 

Based on the above numerical experiments, it can be observed that the errors decrease exponentially as the number of collocation points increases, which indicates the exponential convergence of the method and confirms our theoretical analysis. It can also be found that the convergence rate of the apple-shaped cavity is faster than that of the peach-shaped cavity, which is consistent with the fact that the apple-shaped cavity has an analytic boundary curve while the peach-shaped cavity has only a $\mathcal{C}^2$ boundary curve.

\subsubsection{Non-smooth cavities}

Next we report the scattering by nonsmooth cavities: the drop- and heart-shaped cavities. The parameterizations of the boundary curves of these two cavities are shown in Table \ref{boundary}. Clearly, the drop-shaped cavity is convex but the heart-shaped cavity is concave; both of the boundary curves have a single corner at $x_0\in\Gamma$ and are analytic on $\Gamma\setminus \{x_0\}$. The angle at the corner is defined as $\vartheta$ which satisfies $0<\vartheta<2\pi$. It can be seen from the parameterizations that the drop- and heart-shaped cavities have interior angles of $\vartheta=\pi/2$ and $\vartheta=3\pi/2$, respectively, and the corner point $x_0$ corresponds to the parameter $t=0$. 

Again, to test the accuracy, we consider the point source located at $\bar{x}=(0.1,0.2)^\top$ and $\bar{x}=(-0.5,0.2)^\top$ for the drop- and heart-shaped cavities, respectively, and construct the corresponding exact solution \eqref{exact solution}. Moreover, we employ the graded mesh in order to efficiently resolve the wave field near the corner, i.e., more collocation points are adopted on the boundary curve near the corner while less points are allocated at the part of the boundary curve that is far from the corner. Specifically, we take the substitution $t=w(s)$ in the parametric curve of the drop- and hear-shaped cavities \cite{DR-shu2,Kress1990}, where 
\begin{align*}
	w(s)=2\pi\frac{[v(s)]^p}{[v(s)]^p+[v(2\pi-s)]^p}, \qquad 0\leq s\leq 2\pi
\end{align*}
with
\begin{align*}
	v(s)=\Big(\frac{1}{p}-\frac{1}{2}\Big)\Big(\frac{\pi-s}{\pi}\Big)^3+\frac{1}{p}
	\frac{s-\pi}{\pi}+\frac{1}{2}, \quad p=2. 
\end{align*}
In the experiments, the collocation points are chosen as $s_j:=\pi j/n+\pi/(2n)$. The graded mesh of the collocation points $w(s_j)$, $j=0,\cdots,2n-1$ can be found in \cite[Figure 3]{DLL2021} for both of the drop- and heart-shaped boundary curves.

\begin{table}
	\centering 
	\caption{Numerical errors of the wave fields for the drop- and heart-shaped cavities by using the single-single layer potential formulation.} 
	\label{numerror5} 
	\begin{tabular}{c|c|c|c|c|c|c}  
		\toprule[1pt]
		& \multicolumn{3}{c|}{Drop-shaped} & \multicolumn{3}{c}{Heart-shaped}  \\ 
		\cline{2-7}
		$n$&$\|v_{\rm H}^*-v_{\rm H}^{(n)}\|$&$\|v_{\rm M}^*-v_{\rm M}^{(n)}\|$&time
		&$\|v_{\rm H}^*-v_{\rm H}^{(n)}\|$ &$\|v_{\rm M}^*-v_{\rm M}^{(n)}\|$&time \\
		\hline
		8&3.9875e-03&7.3325e-04& 0.003s&2.0080e-01&3.9261e-02& 0.003s \\
		16&1.2025e-05&2.2451e-06&0.007s&6.2198e-02&3.5015e-02& 0.007s \\
		32&1.5096e-06&1.8009e-07& 0.02s&1.5920e-03&1.0946e-03& 0.02s \\
		64&1.8909e-07&2.2602e-08& 0.07s&4.4875e-05&3.6727e-05& 0.07s \\
		128&2.3294e-08&2.7856e-09& 0.13s&3.2170e-09&2.5981e-09& 0.13s \\
		256&4.5217e-09&5.4078e-10& 0.67s&3.2103e-12&1.5088e-12& 0.68s \\
		512&2.1147e-10&2.5291e-11& 5.95s&6.0989e-13&3.3518e-13& 5.89s \\
		1024&1.3734e-11&1.6426e-12& 53.88s& 4.9250e-13&3.0598e-13& 53.90s \\
		\bottomrule[1pt] 
	\end{tabular}
\end{table}

\begin{table}
	\centering 
	\caption{Numerical errors of the far-field patterns for the drop- and heart-shaped cavities by using the 
		single-single layer potential formulation.} 
	\label{numerrorfarfieldcorner} 
	\begin{tabular}{c|c|c|c|c}  
		\toprule[1pt]
		& \multicolumn{2}{c|}{Drop-shaped} & \multicolumn{2}{c}{Heart-shaped}  \\ 
		\cline{2-5}
		$n$&$\|v_\infty^{(n_*)}-v_\infty^{(n)}\|$   
		&time&$\|v_\infty^{(n_*)}-v_\infty^{(n)}\|$  &time\\
		\hline
		8&8.5518e-02&0.003s &1.7155&0.003s\\
		16&2.4203e-03&0.008s&1.6035e-01&0.007s \\
		32&3.5585e-04&0.02s& 4.1046e-04&0.02s \\
		64&6.1901e-05&0.07s&4.1476e-06&0.07s \\
		128&6.7751e-06&0.13s& 3.9402e-10&0.12s \\
		256&1.1587e-05&0.66s&9.9265e-13&0.69s \\
		512&3.6241e-06&5.99s&2.0886e-13&5.99s \\
		1024&1.5637e-06&53.62s&2.0840e-13&54.15s\\
		\bottomrule[1pt] 
	\end{tabular}
\end{table}

\begin{figure}[hhh]
	\centering 
	\subfigure[$\Re(v^{(n_*)}_\infty)$ and $\Re(v^{(n)}_\infty)$]
	{\includegraphics[width=0.48\textwidth]{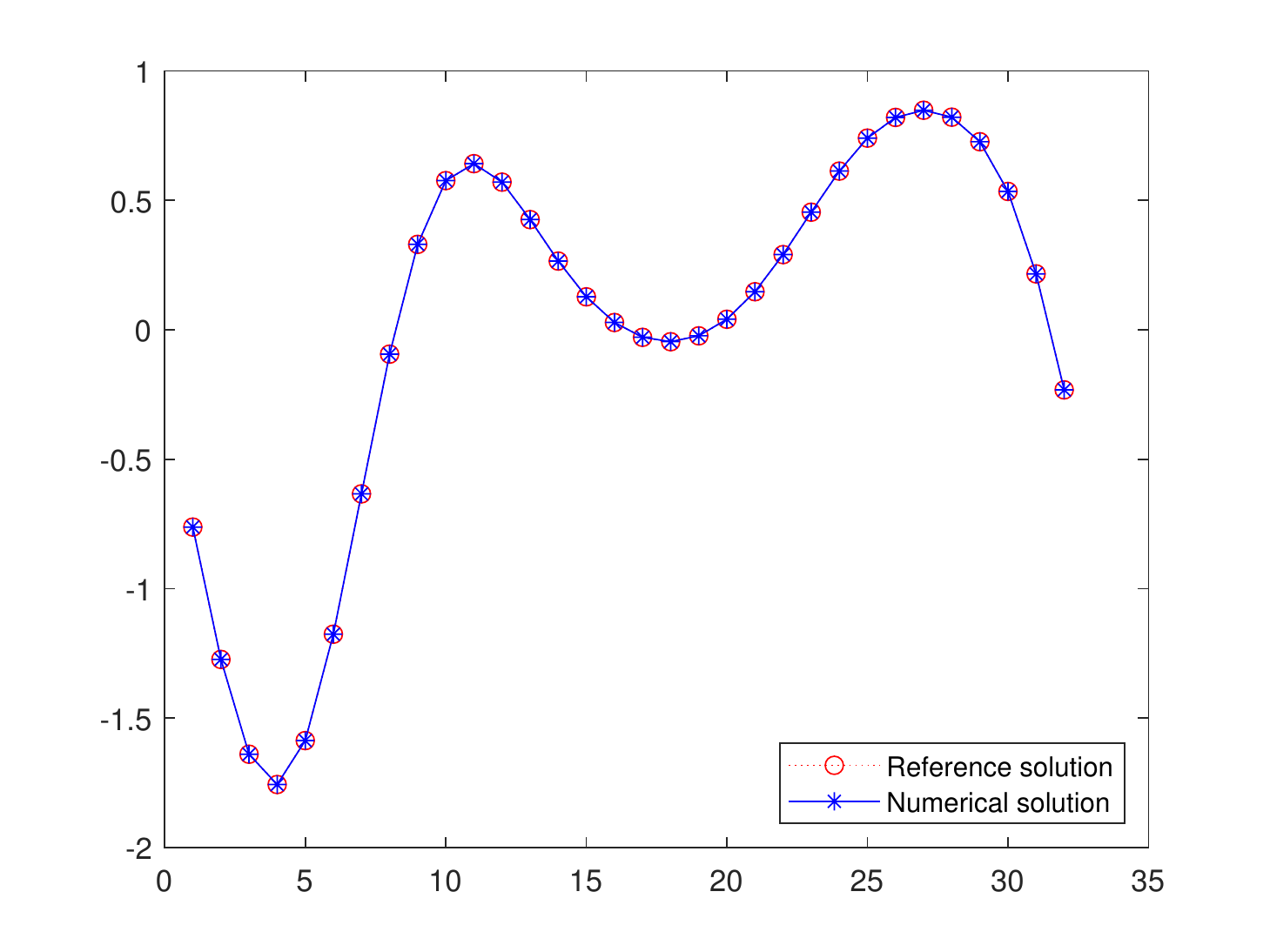}}
	\subfigure[$\Im(v^{(n_*)}_\infty)$ and $\Im(v^{(n)}_\infty)$]
	{\includegraphics[width=0.48\textwidth]{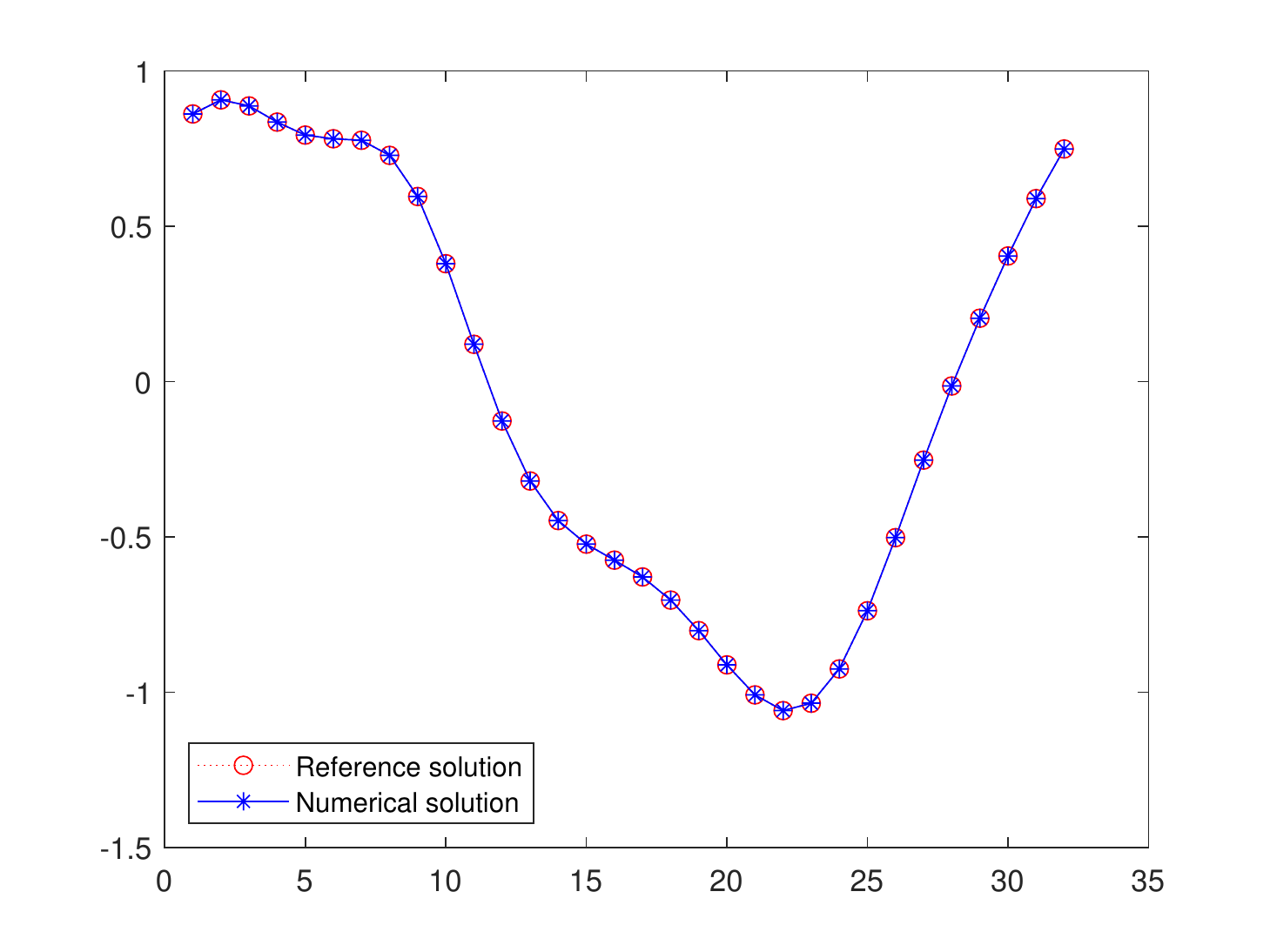}} 
	\caption{The reference and numerical solutions for the
		drop-shaped cavity: (a) the real part of the far-field pattern; (b) the imaginary part of the far-field pattern.}\label{farsolutions_drop}
\end{figure}

\begin{figure}[hhh]
	\centering 
	\subfigure[$\Re(v^{(n_*)}_\infty)$ and $\Re(v^{(n)}_\infty)$]
	{\includegraphics[width=0.48\textwidth]{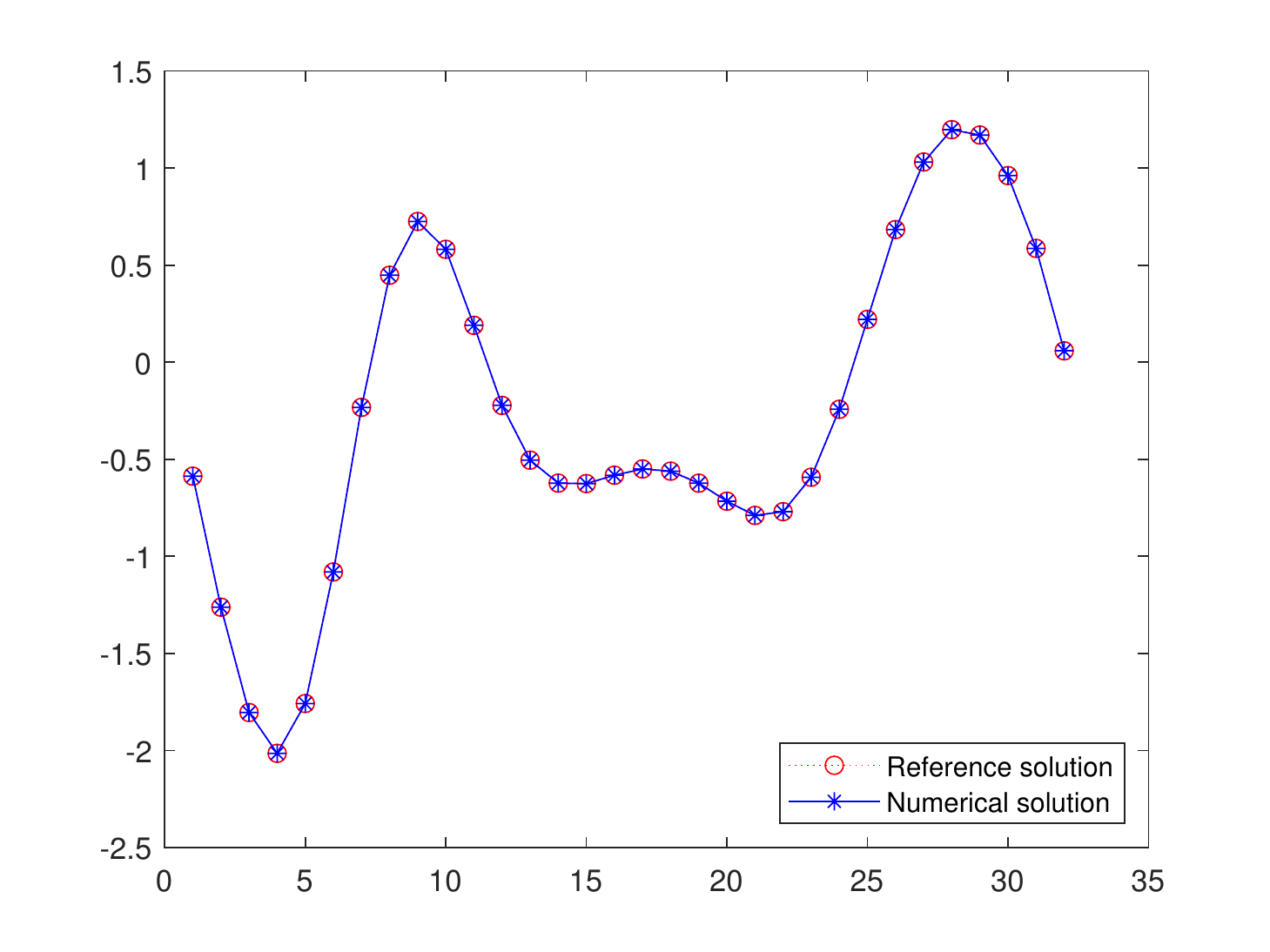}}
	\subfigure[$\Im(v^{(n_*)}_\infty)$ and $\Im(v^{(n)}_\infty)$]
	{\includegraphics[width=0.48\textwidth]{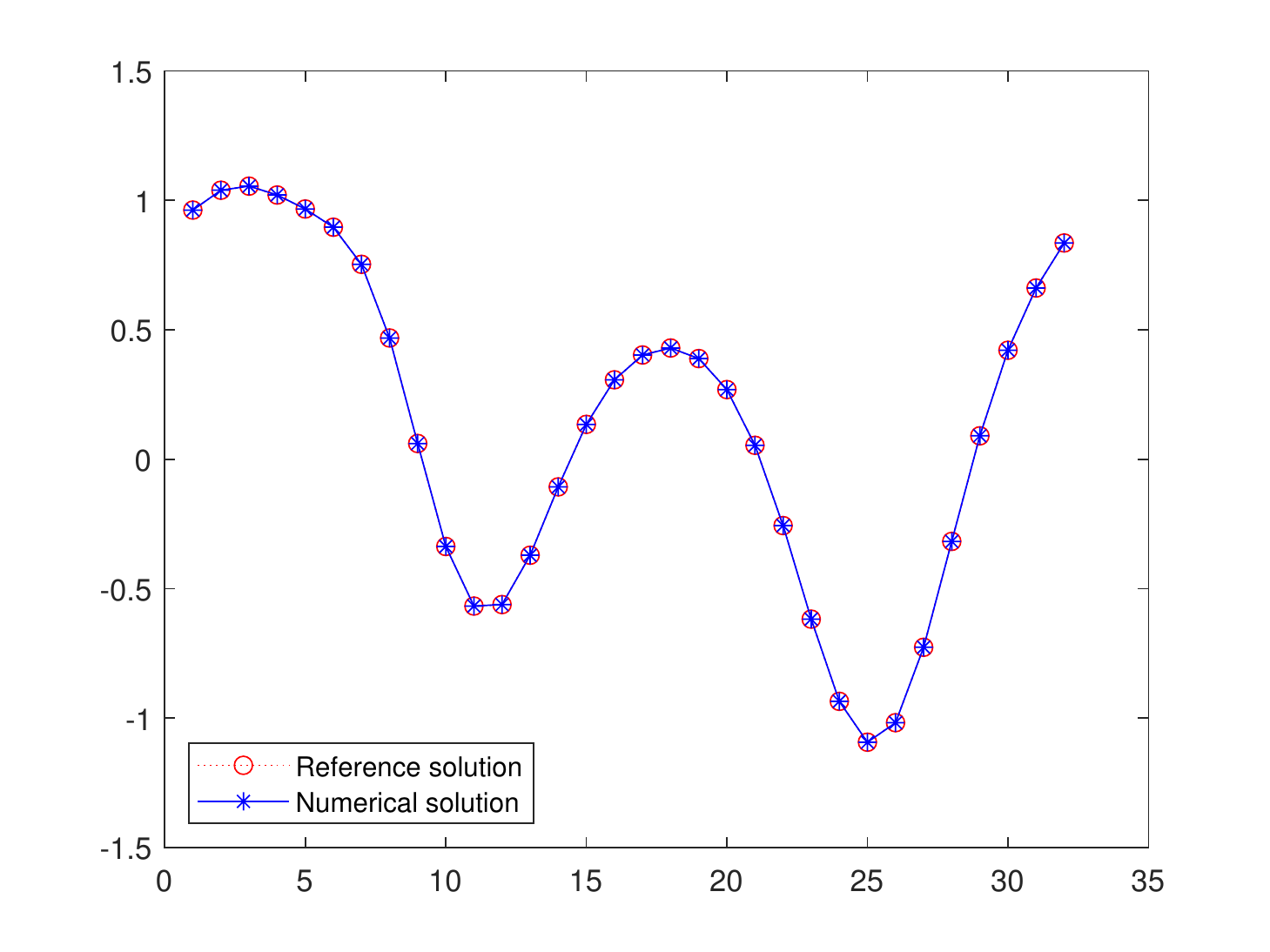}} 
	\caption{The reference and numerical solutions for the
		heart-shaped cavity: (a) the real part of the far-field pattern; (b) the imaginary part of the far-field pattern.}\label{farsolutions_heart}
\end{figure}

Table \ref{numerror5} lists the $L^2(\partial B_2)$ errors between the numerical solution and the exact solution
\eqref{exact solution} for the drop- and heart-shaped cavities, where $\partial B_2=\{x\in\mathbb R^2: |x|=2\}$. Clearly, the errors decay rapidly for both cases by using the graded meshes. This is due to the reason that the exact solution is analytic. On the other hand, the results are more accurate for the heart-shaped cavity than those of the drop-shaped cavity. We believe that this phenomenon is related to the geometry of the cavity. The analysis will be left for a future work. Similar to the scattering by smooth cavities, we consider the plane wave incidence with incident angle $\theta=\pi/6$
and compute the far-field patterns $v_\infty^{(n_*)}$ with $n_*=2048$ as reference solutions. Table \ref{numerrorfarfieldcorner} shows the errors of the far-field patterns by using the single-single layer potential formulation. The numerical solution $v_\infty^{(n)}$ and the reference solution $v_\infty^{(n_*)}$ are shown in Figures
\ref{farsolutions_drop} and \ref{farsolutions_heart} for the drop- and heart-shaped obstacles, respectively. It can be seen that the two solutions agree with each other very well.

\section{Conclusion}\label{s_c}

In this paper, we have studied the biharmonic wave scattering problem of a cavity embedded in an infinite thin plate. Based on the biharmonic wave operator splitting, the scattering problem of the fourth-order biharmonic wave equation is reduced into a coupled boundary value problem of the Helmholtz and modified Helmholtz equations. With the help of potential theory, a novel boundary integral formulation is proposed for the scattering problem. Based on an appropriate regularizer, the operator equation is split into an isomorphic operator plus a compact one. The well-posedness is established for the coupled system. The convergence analysis is carried out for both the semi- and full-discrete schemes by using the collocation method. To demonstrate the superior performance of the proposed method, numerical experiments are presented for both smooth and nonsmooth cavities. The numerical results show that the proposed method is highly accurate even for nonsmooth examples. 

In this work, we examined only the clamped boundary condition. At the edge of cavities, many other boundary conditions can be considered to take into account of different physical behavior \cite{GGS-10}. Clearly, a different boundary condition will lead to a different formulation of boundary integral equations. We are investigating these problems and will report the progress elsewhere in the future.

\end{document}